 \documentclass[11pt]{article}
 \UseRawInputEncoding
\usepackage{amssymb, amsthm, amsmath, amscd}
\setlength{\topmargin}{-45pt} \setlength{\evensidemargin}{0cm}
\setlength{\oddsidemargin}{0cm} \setlength{\textheight}{23.7cm}
\setlength{\textwidth}{16cm}

\usepackage{tikz-cd}

\newtheorem{thm}{Theorem}[section]
\newtheorem{lem}[thm]{Lemma}
\newtheorem{prop}[thm]{Proposition}
\newtheorem{cor}[thm]{Corollary}
\newtheorem{NN}[thm]{}

\theoremstyle{definition}\newtheorem{df}[thm]{Definition}
\theoremstyle{definition}\newtheorem{rem}[thm]{Remark}
\theoremstyle{definition}

\renewcommand{\phi}{\varphi}

\newcommand{\N}{\mathbb{N}}

\newcommand{\Q}{\mathbb{Q}}

\newcommand{\C}{\mathbb{C}}

\newcommand{\Aff}{\operatorname{Aff}}

\newcommand{\id}{\operatorname{id}}

\newcommand{\hm}{homomorphism}
\newcommand{\dt}{\delta}
\newcommand{\ep}{\epsilon}
\newcommand{\la}{\langle}
\newcommand{\ra}{\rangle}
\newcommand{\andeqn}{\,\,\,{\rm and}\,\,\,}
\newcommand{\rforal}{\,\,\,{\rm for\,\,\,all}\,\,\,}
\newcommand{\CA}{$C^*$-algebra}
\newcommand{\SCA}{$C^*$-subalgebra}

\newcommand{\af}{{\alpha}}
\newcommand{\bt}{{\beta}}

\newcommand{\beq}{\begin{eqnarray}}
\newcommand{\eneq}{\end{eqnarray}}
\newcommand{\tforal}{\,\,\,\text{for\,\,\,all}\,\,\,}
\newcommand{\tand}{\,\,\,\text{and}\,\,\,}
\newcommand{\Qw}{\overline{QT(A)}^w}
\newcommand{\LAff}{{\rm LAff}}
\newcommand{\cuapprox}{\stackrel{\approx}{\sim}}
\newcommand{\Wlog}{Without loss of generality}

\newcommand{\simle}{\stackrel{\sim}{<}}

\title{
Projective Hilbert modules and sequential approximations}
\author{Lawrence G. Brown and Huaxin Lin}

\begin{document}

\maketitle

\begin{abstract}
We show  that, when $A$ is a separable 
\CA, every countably generated Hilbert $A$-module is projective (with 
bounded module maps as morphisms).
We also study the approximate extensions of bounded module maps.
In the case that $A$ is a $\sigma$-unital simple \CA\, with strict comparison and every
strictly positive lower semicontinuous affine function on quasitraces can be realized 
as the rank of an element in Cuntz semigroup, 
we show that the Cuntz semigroup is the same as unitarily equivalent class of countably generated 
Hilbert $A$-modules if and only if $A$ has stable rank one. 

\end{abstract}

\section{Introduction}
We study Hilbert module over a \CA\, $A.$ 
A Hilbert $A$ -module $H$  is said to be projective (with bounded module maps as morphisms)
if, for any 
Hilbert $A$-modules $H_1$ and $H_2,$ any bounded module map 
$\phi: H\to H_1$ and  any surjective bounded module map $s: H_2\to H_1,$  there  is always 
a bounded module map $\psi: H\to H_2$ such that $\phi=s\circ \psi.$ 
 It is easy to see that, when $A$ is a unital \CA, a   direct sum $A^{(n)}$ of $n$ copies of $A$ is 
a projective Hilbert $A$-module.
However,  it  is not known that this holds for non-unital \CA s.
One  observes that, for a non-unital \CA\, $A,$ it itself  is 
not algebraically finitely generated as an $A$-module.
  The characterization of projective Hilbert modules over a 
\CA\,  $A$ seems to remain elusive for decades.  Evidence, on the other hand, suggests that most countably 
generated Hilbert modules are projective. In this paper, we will confirm  that, for  any separable \CA\, $A,$ 
every countably generated Hilbert $A$-module is projective. 

It is attempting to classify countably generated Hilbert $A$-modules.
It was shown in \cite{CEI} that, when $A$ has stable rank one, unitary equivalence classes of Hilbert 
$A$-modules are determined by the Cuntz semigroup of $A.$  One might prefer  a more quantified description 
of Hilbert $A$-modules using  some version of dimension functions.   Indeed,
in the  case that $A$ is a separable simple 
\CA\, such that its purely non-compact elements in the Cuntz semigroup are  determined by  
 strictly positive lower semicontinuous affine functions on its 
quasitraces, then one wishes to use the same functions to described those Hilbert $A$-modules 
which are not algebraically finitely generated.  We show that {{in this case,}}
Cuntz equivalence classes are the same as unitary equivalence classes of  countably generated Hilbert 
$A$-modules if and only if $A$ has tracial approximate oscillation zero, or equivalently, in this case, 
$A$ has stable rank one. This is a partial converse of the theorem in \cite{CEI} mentioned above.

Injective Hilbert $A$-modules were studied  in \cite{Lninj}.  If $A$ is not an $AW^*$-algebra,
then $A$ itself is not  an injective   Hilbert $A$-module  (with bounded module maps as morphisms) (see 
Theorem 3.14 of \cite{Lninj}). 
Even if we only consider bounded module maps with adjoints,  there are only very few injective Hilbert 
modules. 
It was shown in \cite{Lninj} that a  countably generated Hilbert $A$-module is *-injective if only if it is orthogonal complementary.  
Let $H$ be a countably generated Hilbert $A$-module. Suppose that $H_0\subset H_1$ are
Hilbert $A$-modules and $\phi: H_0\to H$ is a bounded module map.
As we mentioned
$\phi$ may not be extended to a bounded module map
{{from $H_1$ to $H.$}} 
However, we  show that one may find a sequence of bounded module maps 
$\phi_n: H_1\to H$ with $\|\phi_n\|\le \|\phi\|$  such that
$\lim_{n\to\infty} \|\phi_n(x)-\phi(x)\|=0$ for all $x\in H_0$ (note that {{$H_0$}} is not assumed to 
{{be}} separable).
This result may be stated as every countably generated Hilbert module 
over a $\sigma$-unital \CA\, $A$ is ``sequentially approximately injective". 
With the same spirit, we show  that every countably generated Hilbert module 
over a $\sigma$-unital \CA\, is ``sequentially approximately projective". 

The paper is organized as follows:
Section 2 collects some easy facts about projective Hilbert modules and algebraically finitely generated 
Hilbert modules over a \CA. 
Section 3  discusses some basic results about countably generated Hilbert A-modules.
In Section 4, we show that, under the assumption that $A$ is a $\sigma$-unital simple 
\CA\, with strict comparison and the canonical (dimension function) map $\Gamma$ from 
Cuntz semigroup to lower semi-continuous affine functions on quasitraces 
${\widetilde{QT}}(A)$ is surjective,  countably generated (but not algebraically generated) Hilbert $A$-modules 
can be classified by these lower semi-continuous affine functions if and only if $A$ has stable rank one.
This is a partial converse of  a theorem in \cite{CEI}. We also show that, assume that
$A$ is a $\sigma$-unital simple \CA\, with finite radius of comparison, then a countably generated Hilbert 
$A$-module with infinite quasitrace is unitarily equivalent to $l^2(A).$ 
In section 5, we show that  every countably generated Hilbert module over a separable \CA\, $A$ is always 
projective. Section 6 shows that every Hilbert $A$-module is ``approximately injective," and 
every countably generated Hilbert module over a $\sigma$-unital \CA\, $A$ is ``approximately projective".

{\bf Acknowledgement}: 
This work is based on a preprint \cite{LinCuntz} of 2010.  A draft of the current paper was formed in 2014.
The second named author was partially supported by a NSF grant (DMS-1954600). Both authors would like to acknowledge the support during their visits
to the Research Center of Operator Algebras at East China Normal University
which is partially supported by Shanghai Key Laboratory of PMMP, Science and Technology Commission of Shanghai Municipality (\#13dz2260400 and 
\#22DZ2229014).

\section{Easy facts about projective Hilbert modules}

\vspace{0.1in}

\begin{df}\label{Hild}
Let $A$ be a \CA. For an integer $n\ge 1,$ denote by $A^{(n)}$ the
(right) Hilbert $A$-module of orthogonal direct sum of $n$ copies of $A.$ If
$x=(a_1, a_2,...,a_n), y=(b_1,b_2,...,b_n),$ then
$$
\la x,y\ra =\sum_{i=1}^n a_n^*b_n.
$$
Denote by $H_A,$ or $l^2(A),$ the standard countably generated Hilbert 
$A$-module
$$
H_A=\{\{a_n\}: \sum_{n=1}^k a_n^*a_n \,\,\,{\rm converges\,\,\,
in\,\,\, norm\,\, as}\,\, k\to\infty\},
$$
where the inner product is defined by
$$
\la \{a_n\}, \{b_n\}\ra =\sum_{n=1}^{\infty} a_n^*b_n.
$$

Let $H$ be a Hilbert $A$-module. denote by $H^{\sharp}$ the set of
all  bounded $A$-module maps from $H$ to $A.$
For each $y\in H,$ define $\hat{y}(x)=\la y, x\ra$ for all $x\in H.$ Then 
$\hat{y}\in H^\sharp.$ We say $H$ is self-dual, if every $f\in H^\sharp$ has the form 
$\hat{y}$ for some $y\in H.$ 

 If $H_1, H_2$ are
Hilbert $A$-modules, denote by $B(H_1, H_2)$ the space of all
bounded module maps from $H_1$ {{to}} $H_2.$   If $T\in B(H_1, H_2),$
denote by $T^*: H_2\to H_1^{\sharp}$  the bounded module maps
defined by
$$
T^*(y)(x)=\la y,Tx\ra \tforal x\in H_1\andeqn y\in H_2.
$$
If $T^*\in B(H_2, H_1),$ one says that $T$ has an adjoint $T^*.$  Denote by
$L(H_1, H_2)$ the set of all bounded $A$-module maps in $B(H_1,
H_2)$ with  adjoints.  Let $H$ be a Hilbert $A$-module. In what
follows, denote $B(H)=B(H,H)$ and $L(H)=L(H, H).$  $B(H)$ is a
Banach algebra and $L(H)$ is a \CA.

For $x, y\in H,$ define $\theta_{x, y}\in L(H)$ by 
$\theta_{x,y}(z)=x\la y,z\ra$ for all $z\in H.$ 
Denote by $F(H)$ the linear span of those module maps with the form
$\theta_{x, y},$ where $x, y\in H.$ Denote by $K(H)$ the
closure of $F(H).$  $K(H)$ is a \CA.  It follows from a result of
Kasparov (\cite{K}) that $L(H)=M(K(H)),$  the multiplier algebra of
$K(H),$ and, by \cite{Lnbd}, $B(H)=LM(K(H)),$ the left multiplier
algebra of $K(H).$

Suppose that $H_1$ and $H_2$ are Hilbert $A$-submodules of a Hilbert $A$-module $H.$
Then $K(H_i)$ is a hereditary \SCA\, of $K(H),$ $i=1,2,$ (see Lemma 2.13  \cite{Lninj}). 
Denote by $K(H_1, H_2)$ 
the  subspace of $K(H)$ consists of module maps 
 of the form $STL,$ where $S\in K(H_2),$ $T\in B(H)$ and $L\in K(H_1),$ where 
$K(H_2)$ and $K(H_1)$ are viewed as hereditary \SCA s of $K(H).$ 
Note that $K(H_1, H_2)$ is
 the closure  of the linear span of  those 
module maps with the form $\theta_{x, y},$ where $x\in H_2$ and $y\in H_1.$

It is convenient to have an example that $H\not=H^\sharp.$ 
Let $A$ be a \CA\, with a sequence $\{d_n\}$ of mutually orthogonal positive elements
with $\|d_n\|=1.$ 
Define $f(\{a_n\})=\sum_{n=1}^\infty d_na_n$ for all $\{a_n\}\in H_A.$ 
Then $f$ is a bounded module map but 
$f\not\in  H.$

Two Hilbert $A$-modules are said to be unitarily equivalent, or isomorphic,  if there
is an invertible map $U\in B(H_1, H_2)$ such that
$$
\la U(x_1), U(x_2)\ra =\la x_1, x_2\ra \tforal x_1, x_2\in H_1.
$$
(see \cite{Pa} for further basic  information). 

\end{df}

\begin{df}\label{DHsim}
Let $A$ be a \CA\, and $H$   a Hilbert $A$-module and $H_1$ 
 the $A^{**}$-Hilbert module extension of $H$   constructed in Section 4 of \cite{Pa}, 
Denote by $H^\sim$ the self-dual Hilbert $A^{**}$-module $H_1^\sharp$ (see Section 4 of 
\cite{Pa}). Every bounded module map in $B(H, H^\sharp)$ can be uniquely extended to a bounded module map in $B(H^\sim).$  (This easily follows from the construction of $H^\sim$ and 3.6 of \cite{Pa}.
 See also 1.3 of \cite{Lnbd}.) If $H$ is self-dual, then $B(H)= L(E)$ (see 3.5 of \cite{Pa}).
 Thus  $M(K(H)) = LM(K(H)) = QM(K(H)).$  If in addition,  $A$ is a $W^*$-algebra, $B(H)$ is also a 
 $W^*$ -algebra. In particular, $B(H^\sim)$ is a $W^*$-algebra. Since all maps in $B(H, H^\sharp)$
  can be uniquely extended to  a maps in $B(H^\sim ),$  we conclude that $B(H^\sim )$ is a $W^*$ -algebra containing 
  $K(H),$  $M(K(H)),$ $LM(K(H)$  and $QM(K(H)).$
\end{df}

\begin{df}\label{Dgen}
Let $H$ be a Hilbert $A$-module and ${\cal F}\subset H$ be a subset.
We say that $H$ is generated by ${\cal F}$ (as a Hilbert $A$-module), 
if the linear combination of elements of the form $\{za: z\in {\cal F},\,\, a\in A\}$ is dense in $H.$

We say $H$ is algebraically generated by ${\cal F},$ if every element 
$x\in H$  is a linear combination of elements of the form 
$\{za: z\in {\cal F}, a\in \tilde A\}.$ Note that we write $a\in \tilde A$ instead $a\in A$ 
to include the element $z.$  $H$ is algebraically finitely generated if $H$ is algebraically 
generated by a finite subset ${\cal F}.$ 
See Corollary \ref{Rgen} for some clarification.
\end{df}

Now we turn to the projectivity of Hilbert modules.
\begin{df}
Let $A$ be a \CA\, and $H$ be a Hilbert $A$-module.
We say that $H$ is $*$-projective, if, for any Hilbert $A$-modules $H_1$ and $H_2,$ any
module map $\phi\in L(H, H_2)$ and any surjective module map
$s\in L(H_1,H_2),$ there exists a module map $\psi\in L(H, H_1)$ such that
\beq
s\circ \psi=\phi.
\eneq
Note here we assume that $s, \phi$ and $\psi$ have adjoint module maps. 
\end{df}


\begin{thm}\label{1proj}
Let $A$ be a \CA. 


{\rm (1)}  Suppose that $H$ and $H_1$ are Hilbert $A$-modules and
 $s\in L(H_1, H)$ is surjective.
 Then there is
$\psi_1\in L(H, H_1)$ such that
\beq\label{1proj-1}
s\circ \psi_1={\rm id}_{H}.
\eneq
Moreover, $\psi_1(H)$ is an orthogonal summand of $H_1$ and 
$\psi_1(H)$ is unitarily equivalent to $H.$

{\rm (2)}   Every Hilbert $A$-module is $*$-projective.

\end{thm}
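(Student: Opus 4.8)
\emph{Proof proposal.} The plan is to isolate one statement — that a surjective $s\in L(H_1,H)$ possesses a canonical adjointable right inverse — and then read off both parts. The crucial first step is to prove that if $s$ is surjective then $ss^*$ is invertible in the \CA\, $L(H)=M(K(H)).$ Here $ss^*\ge 0,$ and $ss^*$ is injective: since $s$ is onto, $s^*$ is injective (if $s^*x=0$ then $\la x,sw\ra=\la s^*x,w\ra=0$ for all $w,$ so $\la x,x\ra=0$), whence $ss^*x=0\Rightarrow \la s^*x,s^*x\ra=\la ss^*x,x\ra=0\Rightarrow x=0.$ Moreover $\Ima s=H$ is closed, so by the closed-range theory for adjointable operators (see \cite{Pa}) one has $\Ima(ss^*)=\Ima s=H.$ Thus $ss^*$ is a positive element of the unital \CA\, $L(H)$ that is bijective as a map; its bounded module-map inverse $R$ is then adjointable (writing $y=ss^*z,$ one checks $\la Rx,y\ra=\la x,z\ra=\la x,Ry\ra,$ so $R=R^*$), hence $0\notin \mathrm{sp}(ss^*)$ and $b:=(ss^*)^{-1}\in L(H)_+.$

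Then I would set $\psi_1:=s^*b\in L(H,H_1),$ with adjoint $\psi_1^*=bs.$ Immediately $s\circ\psi_1=ss^*b=\id_H,$ which is (\ref{1proj-1}). Next I would verify that $e:=\psi_1 s=s^*bs\in L(H_1)$ is a projection: $e^*=s^*bs=e$ and $e^2=\psi_1(s\psi_1)s=\psi_1 s=e.$ Hence $H_1=eH_1\oplus (1-e)H_1$ is an orthogonal direct sum, and since $s$ is onto, $eH_1=\psi_1(sH_1)=\psi_1(H);$ thus $\psi_1(H)$ is an orthogonal summand of $H_1.$ Finally $\psi_1\colon H\to \psi_1(H)$ is an adjointable bijection whose inverse is the restriction of $s$ (as $s\psi_1=\id_H$ and $\psi_1 s|_{eH_1}=\id_{eH_1}$); passing to its polar decomposition $U=\psi_1(\psi_1^*\psi_1)^{-1/2}$ produces a unitary in $L(H,\psi_1(H)),$ so $\psi_1(H)$ is unitarily equivalent to $H,$ completing part (1).

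Part (2) would then follow formally: given $\phi\in L(H,H_2)$ and a surjective $s\in L(H_1,H_2),$ apply part (1) to $s$ to obtain $\psi_1\in L(H_2,H_1)$ with $s\psi_1=\id_{H_2},$ and put $\psi:=\psi_1\circ\phi\in L(H,H_1);$ then $s\circ\psi=(s\psi_1)\phi=\phi.$ I expect the main obstacle to be exactly the invertibility of $ss^*$ in part (1): this is where adjointability and the Hilbert-module closed-range theorem are indispensable, and it is the step with no purely Banach-space analogue (indeed, projectivity for \emph{all} bounded module maps, without adjoints, is precisely what the later sections must work much harder to establish). Everything after that step is routine \CA\, manipulation.
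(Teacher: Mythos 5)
Your proposal is correct, and its core construction is exactly the paper's: your $\psi_1=s^*(ss^*)^{-1}$ is the paper's map $L_1=T^*S=T^*(TT^*)^{-1}$ in disguise, and your part (2) is verbatim the paper's reduction. Where you genuinely diverge is in the scaffolding. The paper doubles the module, defining $T(h_1\oplus h)=0\oplus s(h_1)$ on $H_1\oplus H,$ invokes Lemma 2.4 of \cite{Lninj} to obtain $H_1\oplus H=\Ker T\oplus |T|(H_1\oplus H)$ together with the spectral-gap statement ($0$ is isolated in ${\rm sp}(|T|)$ or $|T|$ is invertible), and runs the polar decomposition $T=V|T|$ in $(K(H_1\oplus H))^{**};$ the ``Moreover'' clauses are then extracted by identifying $\psi_1(H)=V^*(H)=|T|(H_1).$ You instead derive invertibility of $ss^*$ from the closed-range theorem for adjointable maps ($\Ima s^*=(\Ker s)^{\perp}$ when $\Ima s$ is closed, so $\Ima(ss^*)=s((\Ker s)^\perp)=H$), after which everything is explicit C*-algebra in $L(H)$ and $L(H_1)$: the self-adjoint idempotent $e=s^*(ss^*)^{-1}s$ gives the orthogonal summand immediately, and $U=s^*(ss^*)^{-1/2}$ (with $U^*U=\id_H,$ $UU^*=e$) gives the unitary equivalence with no passage to the bidual and no polar decomposition of a non-invertible element. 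Your route is shorter and more self-contained, and it also recovers the norm estimate of Remark \ref{R1} for free, since $\psi_1=U(ss^*)^{-1/2}$ gives $\|\psi_1\|=\|(ss^*)^{-1/2}\|.$ Two small points: the closed-range facts you rely on are not in Paschke \cite{Pa} — they are Theorem 3.2 of Lance \cite{Lan} (alternatively, Lemma 2.4 of \cite{Lninj} serves the same purpose, which is exactly how the paper proceeds); and you correctly identify that this invertibility is the one genuinely non-formal step, which is the same pivot on which the paper's argument turns.
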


\begin{proof}
For (1), let 
$s$ be as described.  We note that $s$ has  a closed range.  Define $T:
H_1\oplus H\to H_1\oplus H$ by $T(h_1\oplus h)=0\oplus s(h_1)$ for
$h_1\in H_1$ and $h\in H.$ Then $T\in L(H_1\oplus H)=M(K(H_1\oplus
H))$ (see Theorem 1.5 of \cite{Lnbd}).  It follows from Lemma 2.4 of \cite{Lninj} that
\beq\label{1proj-3}
H_1\oplus H={\rm ker}T\oplus |T|(H_1\oplus H).
\eneq
Let $T=V|T|$ be the polar decomposition in $(K(H_1\oplus H))^{**}.$
Note that the proof of Lemma 2.4 of \cite{Lninj} shows that $0$ is an
isolated point of $|T|$ or $|T|$ is invertible.  So the same holds for $(T^*T),$ and 
hence the same holds for 
$(TT^*).$  Let $S=(TT^*)^{-1},$ where the inverse is taken in
the hereditary \SCA\, $L(H)\subset L(H_1\oplus H).$ Since $s$ is
surjective,
\beq\label{1proj-4}
|TT^*|(H)=H.
\eneq
Note that $T^*=|T|V^*,$ $V|T|V^*=|TT^*|^{1/2}$ and $V^*|TT^*|^{1/2}=T^*.$
Hence 
\beq\label{1proj-5}
L_1:=V^*(TT^*)^{-1/2}=V^*(TT^*)^{1/2}S=T^*S\in L(H_1\oplus H).
\eneq
This also implies that $L_1(H)=V^*((TT^*)^{-1/2}(H))=V^*(H)$ is closed.
Moreover, $V^*$ gives a unitary equivalence of $H$ and $L_1(H).$ 
One then checks that
\beq\label{1proj-7}
TL_1= V|T|V^*(TT^*)^{-1/2}=P,
\eneq
where $P$ is the range projection of $(TT^*)^{1/2}$ which gives the
identity map on  $H.$   Since both $T$ and $L_1$ are in $L(H_1\oplus H),$
so is $P.$ 
One then defines $\psi_1=L_1P|_{H}: H\to H_1.$ Thus $s\circ
\psi_1={\rm id}_H.$

Next we note that $H\subset {\rm ker}T$ and $|T|(H_1\oplus H)=|T|(H_1)\subset H_1.$ 
It follows that ${\rm ker}T=H_0\oplus H.$ By 
\eqref{1proj-3}  again,  $H_1\oplus H=H_0\oplus H\oplus |T|(H_1).$ 
It follows that 
$|T|(H)$ is an orthogonal summand of $H_1.$ 

On the other hand,  since  $\psi_1(H)=L_1(H)=V^*(H),$  $\psi_1(H)$
is closed. But we also have $\psi_1(H)=T^*S(H)= T^*T(H_1)=|T|(H_1).$ 
As we have shown that $L_1(H)$ is unitarily equivalent to $H,$ this  proves the ``Moreover" part of (1).

To show that every Hilbert $A$-module $H$ is $*$-projective, 
let $H_1$ and $H_2$ be Hilbert $A$-modules, $\phi: H\to H_1$ and $s: H_2\to H_1$ 
be bounded module maps with adjoints, where  $s$ is surjective. 
We now apply the statement of (1).

  Note that  (1) holds for any Hilbert $A$-module, in particular, 
it holds for $H_1$ (in place of $H$). 
Since  $s$ is surjective,   we obtain 
$\psi_1\in L(H_1, H_2)$ such that
\beq\label{1proj-8}
s\circ \psi_1={\rm id}_{H_1}.
\eneq
Define $\psi=\psi_1\circ \phi: H\to H_2.$  Then $s\circ \psi=s\circ \psi_1\circ \phi=\phi.$

\end{proof}

\begin{rem}\label{R1}

(i) It is probably worth noting that, in  part (1) of Theorem \ref{1proj}, 
$\|\psi\|=\||ss^*|^{-1/2}\|$ (the inverse is taken in $L(H)$),  and, for the second part, $\|\psi\|=\||ss^*|^{-1/2}\circ \phi\|.$  This, probably,  may  not be improved.

The next three propositions  are easy facts, but perhaps, not entirely trivial.
We include here for the clarity of our further discussion. 
After an earlier version (\cite{LinCuntz}) of this  note was first posted (in 2010), Leonel Robert informed us  
that, using Proposition \ref{1proj} above, he has a proof that the converse of the following also 
holds, i.e., if $H$ is algebraically finitely generated, then $K(H)$ has an identity (see 
Proposition \ref{Probert}). 
\end{rem}

\begin{prop}\label{PH}
Let $A$ be a \CA\, and $H$ be a Hilbert $A$-module. Suppose that
$1_{H}\in K(H).$ Then $H$ is algebraically finitely generated.
\end{prop}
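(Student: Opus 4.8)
The plan is to exploit the hypothesis $1_H \in K(H)$ together with the fact that $K(H)$ is the norm-closure of the finite-rank operators $F(H)$. Since $L(H)$ is a \CA\, with unit $1_H$, any element of $L(H)$ within distance strictly less than $1$ of $1_H$ is invertible. So first I would choose a finite-rank approximant
\[
T=\sum_{i=1}^m \theta_{x_i,y_i}\in F(H)\quad\text{with}\quad \|1_H-T\|<1,
\]
which is possible precisely because $1_H$ lies in $\overline{F(H)}=K(H)$. The Neumann series then shows $T$ is invertible in $L(H)$, with $T^{-1}\in L(H)\subset B(H)$ a bounded module map.

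Next I would read off the generators directly from $T$. For an arbitrary $w\in H$, set $v=T^{-1}w\in H$ and compute
\[
w=T(T^{-1}w)=Tv=\sum_{i=1}^m \theta_{x_i,y_i}(v)=\sum_{i=1}^m x_i\,\la y_i,v\ra .
\]
Putting $a_i=\la y_i,v\ra\in A$, this exhibits $w=\sum_{i=1}^m x_i a_i$ as a finite $A$-linear combination of the fixed elements $x_1,\dots,x_m$. Since $w\in H$ was arbitrary and $A\subset \tilde A$, this shows $H$ is algebraically generated (indeed over $A$, which is stronger than required in Definition \ref{Dgen}) by the finite set ${\cal F}=\{x_1,\dots,x_m\}$, hence algebraically finitely generated.

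There is no serious obstacle here; the only point requiring care is the passage from ``$1_H\in K(H)$'' to the existence of an \emph{invertible} finite-rank operator, which is where the norm estimate $\|1_H-T\|<1$ and the unitality of the \CA\, $L(H)$ (with identity $1_H$) are used. It is worth noting that the argument gives the generators $x_i$ as the left-hand legs of the rank-one operators $\theta_{x_i,y_i}$ appearing in the approximant, and that no adjointability issue arises since each $\theta_{x,y}$ already lies in $L(H)$. This also makes the converse plausible, as remarked before Proposition \ref{Probert}: if $H$ is algebraically finitely generated one expects to reconstruct an identity in $K(H)$, though that direction is addressed separately.
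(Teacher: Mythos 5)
Your proof is correct and follows essentially the same route as the paper: approximate $1_H$ by a finite-rank operator $T\in F(H)$ within distance $<1$, conclude invertibility (hence surjectivity) of $T$ in the unital algebra, and read off the left legs $x_1,\dots,x_m$ of the rank-one summands as algebraic generators. Your explicit computation $w=\sum_i x_i\la y_i,T^{-1}w\ra$ merely spells out the paper's final step ``$TH=H$ implies $\sum_j \xi_j A=H$,'' and your tolerance $\|1_H-T\|<1$ is a harmless relaxation of the paper's $1/4$.
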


\begin{proof}
Let $F(H)$ be the linear span of  rank one module maps of the form
$\theta_{\xi, \zeta}$ for $\xi, \zeta\in  H.$ Since $F(H)$ is dense in $K(H),$ there is $T\in
F(H)$ such that
\beq\label{PH-1}
\|1_H-T\|<1/4.
\eneq
It follows that $T$ is invertible. 
There are $\xi_1, \xi_2,...,\xi_n,
\zeta_1,\zeta_2,...,\zeta_n\in H$ such that
\beq\label{PH-3}
T(\xi) =\sum_{j=1}^n \xi_j<\zeta_j, \xi>\tforal \xi\in H.
\eneq
But $TH=H.$ This implies that $\sum_{j=1}^n\xi_jA=H.$

\end{proof}


\begin{df}\label{DprojM}
Let $A$ be a \CA\, and $H$ be a Hilbert $A$-module.
We say $H$ is projective (with bounded module maps as morphisms), if for any Hilbert modules $H_1$ and $H_2$
and {{any}}  bounded surjective module {{map}} $s: H_2\to H_1$  and {{any}}
bounded module map 
$\phi: H\to H_1,$ there is a bounded module map
$\psi: H\to H_2$ such that $s\circ \psi=\phi.$ So we have the following commutative 
diagram:
\begin{center}
\begin{large}
\begin{equation}
\begin{tikzcd}
H \ar[d, "\phi"'] \ar[dr, dashed, "\psi"]\\
H_1 & H_2 \ar[l, two heads, "s"]
\end{tikzcd}
\end{equation}
\end{large}
\end{center}                     
 
\end{df}

\begin{prop}\label{FHP}
Let $A$ be a \CA\, and $H$ be a Hilbert $A$-module for which
$K(H)$ has an identity. Then $H$ is a projective Hilbert $A$-module.
\end{prop}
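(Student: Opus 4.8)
The plan is to upgrade the hypothesis that $K(H)$ has an identity into the far more usable statement that $1_H$ lies in the algebraic span $F(H)$ of rank-one operators, i.e.\ that $H$ carries a \emph{finite} frame. First I would observe that the identity $e$ of $K(H)$ must coincide with $1_H$: since $K(H)$ is an essential closed two-sided ideal of $L(H)=M(K(H))$, any identity of $K(H)$ is automatically the unit of its multiplier algebra, so $e=1_H$ and hence $1_H\in K(H)=\overline{F(H)}$. Exactly as in the proof of Proposition \ref{PH}, I can then choose $T\in F(H)$ with $\|1_H-T\|<1$; such a $T$ is invertible in the Banach algebra $B(H)$ by the Neumann series, and writing $T=\sum_{j=1}^n\theta_{\xi_j,\zeta_j}$ and composing on the left with $T^{-1}\in B(H)$ gives
\[
1_H=T^{-1}T=\sum_{j=1}^n\theta_{T^{-1}\xi_j,\,\zeta_j}.
\]
Setting $\xi_j':=T^{-1}\xi_j\in H$, this is a genuine finite reconstruction formula: $x=\sum_{j=1}^n \xi_j'\la \zeta_j,x\ra$ for every $x\in H$.

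With this in hand the lifting is immediate, and crucially uses only pointwise surjectivity of $s$. Given Hilbert $A$-modules $H_1,H_2$, a surjective bounded module map $s\colon H_2\to H_1$, and a bounded module map $\phi\colon H\to H_1$, I would choose, for each $j=1,\dots,n$, a vector $\eta_j\in H_2$ with $s(\eta_j)=\phi(\xi_j')$ (only finitely many set-theoretic choices are needed). Then I define
\[
\psi(x):=\sum_{j=1}^n \eta_j\la\zeta_j,x\ra\qquad(x\in H),
\]
which equals $\sum_j\theta_{\eta_j,\zeta_j}$ and is therefore a bounded module map $H\to H_2$ with $\|\psi\|\le\sum_j\|\eta_j\|\,\|\zeta_j\|$. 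Since $s$ and $\phi$ are module maps,
\[
s\psi(x)=\sum_{j=1}^n s(\eta_j)\la\zeta_j,x\ra=\sum_{j=1}^n \phi(\xi_j')\la\zeta_j,x\ra=\phi\Bigl(\sum_{j=1}^n\xi_j'\la\zeta_j,x\ra\Bigr)=\phi(x),
\]
so $s\circ\psi=\phi$ and $H$ is projective in the sense of Definition \ref{DprojM}.

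The crux is the first paragraph: passing from $1_H\in K(H)$ (a norm limit of finite-rank operators) to $1_H\in F(H)$ (an honest finite sum), which is exactly what produces the finite reconstruction formula. Once that is secured the lifting is essentially free — it requires only that $s$ be surjective as a map of sets, with no bounded linear (let alone module) section of $s$ and no appeal to adjoints. This is precisely why the argument goes through even for non-unital $A$, where $A^{(n)}$ itself need not be projective: the finite frame lets me lift only the finitely many vectors $\phi(\xi_j')$ and reassemble, sidestepping the obstruction that blocks the full free module.
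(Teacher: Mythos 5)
Your proof is correct, and it takes a genuinely more direct route than the paper's. The paper's argument proceeds in two stages: it first proves that $\tilde A^{(N)}$ is projective by lifting the standard generators $e_i$ (the point being $\la e_i,e_i\ra=1_{\tilde A}$), and then combines Proposition \ref{PH} with Kasparov's stabilization theorem to realize $H$ as an orthogonal summand of $\tilde A^{(N)}$ --- here $1_H\in K(H)$ forces the Kasparov projection $P$ to lie in $K(H_{\tilde A})$, hence to be compressible to a projection $P_1\in M_N(\tilde A)$ --- after which projectivity is transported along the embedding/projection pair $\iota,j$. You instead sharpen the argument of Proposition \ref{PH} itself: not only is the $T\in F(H)$ with $\|1_H-T\|<1$ invertible, but left composition with $T^{-1}\in L(H)$ (a module map, so $T^{-1}\theta_{\xi,\zeta}=\theta_{T^{-1}\xi,\zeta}$) upgrades the norm approximation to the exact finite reconstruction formula $1_H=\sum_{j}\theta_{\xi_j',\zeta_j}$, and then the lift $\psi=\sum_j\theta_{\eta_j,\zeta_j}$ with $s(\eta_j)=\phi(\xi_j')$ is the same pointwise-generator lifting the paper uses for $\tilde A^{(N)}$, applied directly to $H$; as you note, only set-theoretic surjectivity of $s$ is used. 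What your route buys: Kasparov's theorem and the summand bookkeeping disappear, the argument is visibly insensitive to $A$ being nonunital, and you get the explicit (if crude) bound $\|\psi\|\le\sum_j\|\eta_j\|\|\zeta_j\|$, though not an estimate of the $\|\tilde s^{-1}\circ\phi\|$ type discussed in the paper's subsequent remark. What the paper's route buys: the intermediate facts --- projectivity of $\tilde A^{(N)}$ and the realization of $H$ as a summand of $\tilde A^{(N)}$ --- are reused elsewhere, e.g.\ in Proposition \ref{Probert} to prove self-duality. Finally, your opening observation that the unit of $K(H)$ must coincide with $1_H$ (by essentiality of $K(H)$ in $L(H)=M(K(H))$) is correct and makes explicit an identification the paper's proof uses tacitly in passing from ``$K(H)$ has an identity'' to ``$1_H\in K(H)$''.
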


\begin{proof}
Denote by $\tilde A$ the minimum unitization of $A.$
Note that $H$ is a Hilbert $\tilde A$-module. 
We may write $\tilde A^{(N)}=e_1\tilde A\oplus e_2 \tilde A\oplus \cdots \oplus e_N\tilde A,$
where $\la e_i, e_i\ra=1_{\tilde A}.$ 
Let us first show that $\tilde A^{(N)}$ is a projective Hilbert $\tilde A$-module.

Suppose that $H_1$ and $H_2$  are Hilbert $\tilde A$-modules, $s: H_2\to H_1$ is a surjective 
bounded module map and $\phi: \tilde A^{(N)}\to H_1$ is a bounded module map. 
Let $x_i=\phi(e_i),$ $1\le i\le N.$ 
Since $s$ is surjective, 
one chooses $y_i\in H_2$ 
such that
\beq
s(y_i)=x_i,\,\,1\le i\le N.
\eneq
Define   a module map $\psi: {\tilde A}^{(N)}\to H_1$ 
$$
\psi(h)=\sum_{i=1}^N y_i\la e_i, h\ra\rforal h=\oplus_{i=1}^Na_i,
$$
where $a_i\in A,$ $1\le i\le N.$ 
Then $\phi\in K(H, H_2).$ 
Moreover
\beq
s\circ \psi(e_i)=x_i\la e_i, e_i\ra=x_i=\phi(e_i),\,\,1\le i\le N.
\eneq
It follows that $s\circ \psi=\phi.$
%

Now we consider the general case.
 From \ref{PH}, $H$ is algebraically 
finitely generated. Therefore, a theorem of Kasparov (\cite{K}) shows that
$H=PH_{\tilde A}$ for some projection $P\in L(H_{\tilde A}).$ The fact that $1_H\in
K(H)$ implies that $P\in K(H_{\tilde A}).$ Therefore there is an integer
$N\ge 1$ and a projection $P_1\in M_N(\tilde A)$ such that $PH$ is
unitarily equivalent to $P_1H_{\tilde A}.$  In other words, one may assume
that $H$ is a direct summand of ${\tilde A}^{(N)}.$ 

Write ${\tilde A}^{(N)}=H\oplus H'$ for some Hilbert ${\tilde A}$-module $H'.$ 
Let $H_0$ and $H_1$ be Hilbert $A$-modules, $s: H_0\to H_1$ be a surjective 
bounded module map and $\phi: H\to H_1$ be another bounded module map.
We view them as ${\tilde A}$-modules and ${\tilde A}$-module maps. 
Let $\iota: H\to H\oplus H'$ be the embedding and $j: H\oplus H'\to H$ be the projection.
Then $j\circ \iota={\rm id}_{H}.$
We then have the following diagram:
$$
\begin{array}{ccccc}
H&\leftarrow_{j}& H\oplus H'&\leftarrow_{\iota} H\\
\downarrow_{\phi}&&&\\
H_1 &{\large{\twoheadleftarrow_{s}}}& H_0  &
\end{array}
$$
Recall that these are Hilbert $\tilde A$-modules and  bounded $\tilde A$-module maps.
Since we have shown $H\oplus H'={\tilde A}^{(N)}$  is projective, we obtain a bounded 
module map $\psi_1: H\oplus H'\to H_1$ such that the following commutes:
$$
\begin{array}{ccccc}
H&\leftarrow_{j}& H\oplus H'&\leftarrow_{\iota} H\\
\downarrow_{\phi}&&\downarrow_{\psi_1}&\\
H_1 & {\large{\twoheadleftarrow_{s}}} & H_0  
\end{array}
$$
So $s\circ \psi_1=\phi\circ j.$ 
Define $\psi=\psi_1\circ \iota.$  So $s\circ \psi=s\circ \psi_1\circ \iota=\phi\circ j\circ \iota=\phi.$
%
%
\end{proof}

\begin{rem}

(1) One may notice  that we do not provide an estimate of $\|\psi\|$ in Proposition \ref{FHP}.
Note that $s$ induces a one-to one and surjective  Banach module map $\tilde s: H_2/{\rm ker}s\to H_1.$
Therefore its inverse $\tilde s^{-1}$ is bounded. 
In the proof,  for any $\ep>0,$ one may choose $y_i\in H_2$ such 
that $\|y_i\|\le \|\tilde s^{-1}\circ \phi\|+\ep/\sqrt{N}.$ Then one may be able to estimate that
$\|\psi\|\le \sqrt{N}\|\tilde s^{-1}\circ \phi\|+\ep.$ It is not clear one can do better than that in 
these lines of proof.

(2) The fact that $\la e_i, e_i\ra =1_{\tilde A}$ is crucial in the proof. It should be
noted that, when $A$ is not unital,  the argument  in the proof of Proposition \ref{FHP} does not
imply  that $A^{(n)}$ is projective (with bounded module maps as
morphisms). However, we do not assume that $A$ is unital in Proposition \ref{FHP}
(but $K(H)$ is unital). 
On the {{other}} hand, one should be warned that, if $A$ is not unital and $H=A^{(n)},$ 
then $K(H)=M_n(A)$ which is never unital.
There are projective Hilbert modules
for which $K(H)$ is not unital. In fact, 
in  Theorem \ref{BLT1}
 below,  we show 
that $A^{(n)}$ is always projective when $A$ is separable.  

(3) Let $H_1=\overline{xA}$ and $H_2=\overline{yA}.$ Define  a module map $\psi: xA\to yA$
by $\phi(xa)=ya$ for all $a\in A.$  In general such a module map may not be bounded.
Consider $A=C([0,1])$ and $x(t)=t$ and $y=1_A.$ 
Let  $\psi: xA\to yA$ be defined by $\psi(x f)=f$ for all $f\in C([0,1]).$ This module map is not bounded.
We do not know that $A^{(n)}$ is projective in general 
when $A$ is not unital and not separable (see Lemma \ref{appprojL}).


(4)  One may also mention that if 
$H_1, H_2,...,H_n$ are projective Hilbert $A$-modules 
then 
$\bigoplus_{i=1}^n H_i$ is also projective.
\end{rem}

\begin{prop}[L. Robert]\label{Probert}
Let $A$ be a \CA\, and $H$ be an algebraically finitely generated Hilbert $A$-module. 
Then $K(H)$ is unital and $H$ is self-dual.
\end{prop}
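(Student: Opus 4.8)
The plan is to realize $H$ as an orthogonal direct summand of a finitely generated free module over the unitization $\tilde A$ and then read off both conclusions from the resulting corner structure. Let $\{z_1,\dots,z_n\}$ be a finite algebraic generating set for $H$ in the sense of Definition \ref{Dgen}, and regard $H$ as a Hilbert $\tilde A$-module (the inner product is unchanged and still takes values in $A\subset\tilde A$). I would define $s\colon \tilde A^{(n)}\to H$ by $s(a_1,\dots,a_n)=\sum_{i=1}^n z_ia_i$, equivalently $s=\sum_{i=1}^n\theta_{z_i,e_i}$, where $e_i$ is the $i$th standard generator of $\tilde A^{(n)}$. Then $s\in K(\tilde A^{(n)},H)\subset L(\tilde A^{(n)},H)$; in particular $s$ is adjointable, with $s^*(h)=(\langle z_1,h\rangle,\dots,\langle z_n,h\rangle)$. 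Moreover $s$ is surjective precisely because, by Definition \ref{Dgen}, every element of $H$ is a linear combination of the $z_ia$ with $a\in\tilde A$, and such combinations can be regrouped into the form $\sum_i z_ib_i$ with $b_i\in\tilde A$.

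First I would apply Theorem \ref{1proj}(1) (over the \CA\ $\tilde A$) to the surjection $s$: this produces $\psi_1\in L(H,\tilde A^{(n)})$ with $s\circ\psi_1=\mathrm{id}_H$ and, more importantly, shows that $\psi_1(H)$ is an orthogonal summand of $\tilde A^{(n)}$ that is unitarily equivalent to $H$. Since $\tilde A$ is unital, $\tilde A^{(n)}$ is self-dual and $L(\tilde A^{(n)})=K(\tilde A^{(n)})=M_n(\tilde A)$ is a unital \CA. Hence the orthogonal projection $P$ onto $\psi_1(H)$ lies in $M_n(\tilde A)$ and $\psi_1(H)=P\tilde A^{(n)}$, so $K(\psi_1(H))=PM_n(\tilde A)P$ is a corner of a unital algebra, unital with unit $P$. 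Because the rank-one operators $\theta_{x,y}$ (and hence $F(H)$ and $K(H)$) are the same whether $H$ is regarded over $A$ or over $\tilde A$, transporting along the unitary equivalence $H\cong\psi_1(H)$ gives that $K(H)$ is unital, which is the first assertion.

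For self-duality I would note that $\psi_1(H)=P\tilde A^{(n)}$, being an orthogonal complemented submodule of the self-dual module $\tilde A^{(n)}$, is itself self-dual over $\tilde A$ (self-duality passes to orthogonal summands, cf.\ Section 4 of \cite{Pa}), and hence so is $H$ via the unitary $H\cong\psi_1(H)$. To descend to the statement over $A$ in Definition \ref{Hild}, I would observe that any bounded $A$-module map $f\colon H\to A$ is automatically a bounded $\tilde A$-module map into $\tilde A$, since for $\lambda 1+a_0\in\tilde A$ one has $f\big(h(\lambda 1+a_0)\big)=\lambda f(h)+f(h)a_0=f(h)(\lambda 1+a_0)$; self-duality over $\tilde A$ then yields $y\in H$ with $f(h)=\langle y,h\rangle$, and as $\langle y,h\rangle\in A$ this is precisely the required representative $\hat y\in H^\sharp$ over $A$. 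The main obstacle I anticipate is not the summand argument itself but the bookkeeping between $A$ and $\tilde A$: one must verify that adjointability of $s$, the identification $L(\tilde A^{(n)})=M_n(\tilde A)$, the corner description $K(\psi_1(H))=PM_n(\tilde A)P$, and the two notions of self-duality are all compatible under passage to the unitization, and that the coefficients-in-$\tilde A$ clause of Definition \ref{Dgen} is exactly what forces $s$ to be surjective.
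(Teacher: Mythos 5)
Your proof is correct, and its skeleton matches the paper's: the same compact surjection $s=\sum_{i=1}^n\theta_{\xi_i,e_i}\in K(\tilde A^{(n)},H)$, the same appeal to Theorem \ref{1proj}(1), and ultimately the same use of the self-duality of $\tilde A^{(n)}$. Where you genuinely diverge is the unitality step. The paper \emph{re-enters the proof} of Theorem \ref{1proj}: since $s\in K$, the auxiliary operator $T$ on $\tilde A^{(n)}\oplus H$ lies in $K(\tilde A^{(n)}\oplus H)$, hence so does $L_1=T^*S$, and therefore $s\circ\psi_1=\mathrm{id}_H$ is itself an element of $K(H)$; unitality is read off from the identity map being compact. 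You instead use only the \emph{statement} of Theorem \ref{1proj}(1) (orthogonal summand plus unitary equivalence) together with $L(\tilde A^{(n)})=K(\tilde A^{(n)})=M_n(\tilde A)$, so that $K(H)\cong K(\psi_1(H))=PM_n(\tilde A)P$ is a corner of a unital algebra. Your route is more modular — it is immune to how Theorem \ref{1proj} is proved, whereas the paper's phrase ``we then proceed the same proof'' obliges the reader to verify that the polar-decomposition construction stays inside $K$ — while the paper's route is shorter once that verification is granted and exhibits $1_{K(H)}$ concretely in terms of the generators. On self-duality the two arguments are essentially identical in content (project with $Q=P$, transport with $U$, represent in $\tilde A^{(n)}$), but you handle one point more carefully than the paper does: you check explicitly that a bounded $A$-module map $f\colon H\to A$ is automatically $\tilde A$-linear, which is what licenses applying self-duality of the $\tilde A$-module $\tilde A^{(n)}$ to $\tilde f$; the paper uses this silently in defining $\tilde f=f\circ U^{-1}\circ Q$ and invoking a representing vector $g$. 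All the compatibility checks you flag ($\theta_{x,y}$, $F(H)$, $K(H)$ unchanged under passage to $\tilde A$; $K(P\tilde A^{(n)})=PM_n(\tilde A)P$; self-duality passing to orthogonal summands and across unitaries) are correct and routine.
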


\begin{proof}
Write $H=\sum_{i=1}^n\xi_i \tilde A$ (where ${{\xi_i}}\in H$). 
Put $H_1=\tilde A^{(n)}.$   We view both $H$ and $H_1$ as Hilbert $\tilde A$-modules.
Write $H_1=\bigoplus_{i=1}^n e_i \tilde A,$ where $e_i=1_{\tilde A},$ $1\le i\le n.$ 
Define a module map $s: H_1\to H$ by $s(e_i)=\xi_i,$ $1\le i\le n.$ 
Then 
\beq
s(h)=\sum_{i=1}^n \xi_i\la e_i, h\ra\rforal h\in H_1.
\eneq
In other words $s=\sum_{i=1}^n \theta_{\xi_i, e_i}\in K(H_1, H).$
In particular, $s\in L(H_1, H).$ 
Since $H$ is algebraically generated by $\{\xi_1, \xi_2,..,\xi_n\},$ $s$ is surjective.
We then proceed the same proof of Theorem \ref{1proj} to obtain the bounded module map $L_1=T^*S.$
 Note, in this case, $T\in K(H_1\oplus H)$ and hence $L_1\in K(H_1\oplus H).$
 It follows that $s\circ \psi_1\in K(H)$ (with notation in the proof of \ref{1proj}).
  Since $s\circ \psi_1={\rm id}_H,$ we conclude 
 that $K(H)$ is unital. 
 
 By Theorem \ref{1proj}, $\psi_1(H)$ is  an orthogonal summand of 
 $H_1=\tilde A^{(n)}.$  Let $U: H\to  \psi_1(H)$ be a bounded module map 
 which implements the unitary equivalence.  Note that $U\in L(H, \psi_1(H)).$
 Let $Q: H_1\to \psi_1(H)$ be the projection 
 of $H_1$ onto the orthogonal summand $\psi_1(H).$ 
 
 Suppose that $f\in H^\sharp,$ i.e., $f: H\to A$ is  a bounded module map.
 Define $\tilde f: H_1\to A$ by $\tilde f(h)=f\circ U^{-1}\circ Q(h))$ for all $h\in H_1.$
 Note that $f(x)=\tilde f(U(x))$ for all $x\in H.$
 Since $H_1=\tilde A^{(n)}$ is self-dual, there is $g\in H_1$ such 
 that $\tilde f(h)=\la g, h\ra$ for all $h\in H_1.$ If $x\in H,$ then 
 $f(x)=\tilde f(U(x))=\la g, Q(U(x))\ra.$ Hence $f(x)=\la (QU)^*(g), x\ra$ for all $x\in H.$
 Since $(QU)^*(g)\in H,$ this shows that $H=H^\sharp$ and $H$ is self-dual.
 %
 \end{proof}

\begin{lem}[cf. Proposition 1.4.5 of \cite{Pedbook}]\label{Lpolord}
Let $A$ be a \CA\, and 
$H$ a Hilbert $A$-module and $x\in H.$ Then, for any $0<\af<1/2,$ 
there exists $y\in \overline{xA}\subset H$ with 
$\la y, y\ra^{1/2}=\la x, x\ra^{1/2-\af}$ such 
that $x=y\cdot \la x, x\ra^\af.$
\end{lem}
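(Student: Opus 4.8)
The plan is to produce $y$ as the norm limit of a sequence $y_n=x\,g_n(a)\in xA$, where $a=\la x,x\ra\in A_+$ and $g_n$ is a continuous, bounded truncation of the (singular) function $t\mapsto t^{-\af}$. The guiding heuristic is that formally $y$ ``should be'' $x\,a^{-\af}$: then $y\cdot a^\af=x$ and $\la y,y\ra=a^{-\af}\,a\,a^{-\af}=a^{1-2\af}$. Since $a^{-\af}$ need not exist in $A$ (as $A$ may be nonunital, $a$ need not be invertible, and $0$ may lie in $\mathrm{sp}(a)$), I would realize it by continuous functional calculus. The structural fact I would use throughout is that inner products among elements of $xA$ are governed by $a$: for $b,c\in A$ one has $\la xb,xc\ra=b^*ac$, so that for a real continuous function $h$ the element $h(a)$ is self-adjoint and commutes with $a$, whence $\la x\,h(a),x\,h(a)\ra=a\,h(a)^2$ and $\|x\,h(a)\|^2=\sup_{t\in\mathrm{sp}(a)}t\,h(t)^2$. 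This reduces every Hilbert-module estimate to an elementary scalar estimate on $[0,\|a\|]$.

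Concretely I would set $g_n(t)=\min\{t^{-\af},\,n^\af\}$, a continuous function on $[0,\infty)$ equal to $t^{-\af}$ for $t\ge 1/n$ and to $n^\af$ for $t\le 1/n$, and put $y_n=x\,g_n(a)\in xA\subset\overline{xA}$. First I would verify that $\{y_n\}$ is Cauchy: by the formula above $\|y_n-y_m\|=\sup_{t\in\mathrm{sp}(a)}|\,t^{1/2}g_n(t)-t^{1/2}g_m(t)\,|$, and $t^{1/2}g_n(t)=\min\{t^{1/2-\af},\,t^{1/2}n^\af\}$ converges uniformly on $[0,\|a\|]$ to $t^{1/2-\af}$ (the two branches agree for $t\ge 1/n$, while for $t\le 1/n$ both are at most $n^{\af-1/2}\to 0$). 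This yields a limit $y:=\lim_n y_n\in\overline{xA}$. Next, the norm identity follows because $\la y_n,y_n\ra=a\,g_n(a)^2$ and $t\,g_n(t)^2\to t^{1-2\af}$ uniformly on $[0,\|a\|]$ (equal for $t\ge 1/n$, both $\le n^{2\af-1}\to 0$ for $t\le 1/n$); hence $\la y,y\ra=a^{1-2\af}$, i.e.\ $\la y,y\ra^{1/2}=a^{1/2-\af}=\la x,x\ra^{1/2-\af}$. Finally, for the factorization I would compute, via functional calculus, $\|y_n\,a^\af-x\|^2=\sup_{t\in\mathrm{sp}(a)}t\,(g_n(t)t^\af-1)^2$; since $g_n(t)t^\af=1$ for $t\ge 1/n$ and $g_n(t)t^\af=(nt)^\af\in[0,1]$ for $t\le 1/n$, this supremum is at most $1/n$. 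As right multiplication by $a^\af$ is continuous on $H$, $y\,a^\af=\lim_n y_n\,a^\af=x$, giving the asserted identity $x=y\cdot\la x,x\ra^\af$.

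The only genuine obstacle is the behaviour near $t=0$, where $t^{-\af}$ is singular; everything hinges on controlling $t^{1/2}g_n(t)$ and $t\,g_n(t)^2$ there. This is precisely where the hypothesis $\af<1/2$ is used: it makes the exponents $\tfrac12-\af$ and $1-2\af$ strictly positive, so the truncation errors near $0$ (of orders $n^{\af-1/2}$ and $n^{2\af-1}$) tend to $0$ and the needed convergences are uniform. Without $\af<1/2$ these estimates fail and the limit need not exist. Beyond this point I expect no difficulty, since the commutativity of the functional calculus of $a$ turns all the remaining verifications into routine uniform estimates of scalar functions on $[0,\|a\|]$.
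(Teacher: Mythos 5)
Your analytic core is correct and runs essentially parallel to the paper's own proof: the paper regularizes the singular function $t^{-\af}$ by $(1/n+a)^{-\af}$, you regularize it by the truncation $g_n(t)=\min\{t^{-\af},n^{\af}\},$ and in both cases the identity $\|x\,h(a)\|^2=\sup_{t\in\mathrm{sp}(a)}t\,h(t)^2$ (from $\la xb,xc\ra=b^*ac$) reduces everything to uniform scalar estimates on $[0,\|a\|].$ Your three estimates --- the Cauchy bound of order $n^{\af-1/2},$ the uniform convergence $t\,g_n(t)^2\to t^{1-2\af}$ giving $\la y,y\ra=a^{1-2\af},$ and $\sup_t t\,(g_n(t)t^{\af}-1)^2\le 1/n$ giving $x=y\,a^{\af}$ --- are all valid, and you correctly identify $\af<1/2$ as the hypothesis making the exponents near $0$ positive.

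There is, however, one genuine gap: the claim that $y_n=x\,g_n(a)\in xA.$ When $A$ is nonunital, continuous functional calculus lands in $A$ only for functions vanishing at $0,$ and $g_n(0)=n^{\af}\neq 0;$ so $g_n(a)\in\tilde A\setminus A$ in general, $y_n\in x\tilde A,$ and your argument only places the limit $y$ in $\overline{x\tilde A},$ not in $\overline{xA}$ as the lemma asserts. This is precisely the point the paper treats with a dedicated final step: its approximants $x(1/n+a)^{-\af}$ have the same defect, so the authors choose $\af<\af'<1/2,$ note that $(1/n+a)^{-\af'}a^{\af'-\af}\in A$ (this function of $a$ vanishes at $0$), and verify that $x(1/n+a)^{-\af'}a^{\af'-\af}\to y_\af,$ which is what puts $y_\af$ in $\overline{xA}.$ Your proof is missing any counterpart of this step. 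Fortunately the repair is easy in your setup, in either of two ways: (i) replace your truncation by one vanishing at $0,$ e.g.\ $g_n(t)=\min\{t^{-\af},\,n^{2\af}t^{\af}\},$ which agrees with $t^{-\af}$ for $t\ge 1/n,$ satisfies $g_n(0)=0$ (so $g_n(a)\in A$ and $y_n\in xA$ genuinely), and still gives $t^{1/2}|g_n(t)-t^{-\af}|\le n^{\af-1/2}$ on $[0,1/n];$ or (ii) keep your $g_n$ and add the observation that $x\tilde A\subseteq\overline{xA},$ since for $b\in\tilde A$ and an approximate identity $\{e_\lambda\}$ of $A$ one has $\|xb-xe_\lambda b\|^2=\|b^*(1-e_\lambda)a(1-e_\lambda)b\|\le\|b\|^2\|a-e_\lambda a\|\to 0$ --- an estimate that does not presuppose the lemma, so no circularity with Corollary \ref{Rgen} arises. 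With either patch your argument is a complete proof.
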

 
 \begin{proof}
 We first consider Hilbert $\tilde A$ module $\overline{x \tilde A}.$ 
 Put $a=\la x, x\ra,$ $a_n=(1/n+a)^{-\af}$ and $x_n=x\cdot (1/n+a)^{-\af},$ $n\in \N.$
 We will show that $\{x_n\}$ is a Cauchy sequence.
 
  Put $\bt_{n,m}=(1/n+a)^{-\af}-(1/m+a)^{-\af},$
 $n,m\in \N.$
 Then
 $$
 \|x \bt_{n,m}\|=\|(\bt_{n,m}\la x, x\ra \bt_{n,m})^{1/2}\|=\|a^{1/2} \bt_{n,m}\|.
 $$
 Since $a^{1/2}(1/n+a)^{-\af}$ converges to $a^{1/2-\af}$ in norm. 
 we conclude  that $\|x\bt_{n,m}\|\to 0$ as $n,m\to\infty.$ It follows that $x_n\to y_\af$ for some 
 $y_\af\in H.$ Hence  $x_n \la x, x\ra^\af\to y_\af \la x, x\ra^\af.$ Moreover,
 $\la y_\af, y_\af \ra^{1/2} =\lim_{n\to\infty} a^{1/2} a_n=a^{1/2-\af}$ (converge in norm).
 
 Similarly,
 $\lim_{n\to \infty}\|x-xa_na^\af\|=\lim_{n\to\infty}\|a^{1/2}(1-a_na^\af)\|=0.$
 Since $x_na^\af=xa_n a^\af,$ we obtain that $\lim_{n\to \infty}x_na^\af=x.$ It follows that $x=y_\af\la x, x\ra^{\af}.$
 Choose $\af<\af'<1/2.$  Then $(1/n+a)^{-\af'}a^{\af'-\af}\in A.$
 Since $\lim_{n\to\infty} x(1/n+a)^{-\af'}a^{\af'-\af}=\lim_{n\to\infty} x(1/n+a)^{-\af'}(1/n+a)^{\af'-\af}=\lim_{n\to\infty}x(1/n+a)^{-\af}=y_\af,$ $y_\af\in \overline{xA}.$

\end{proof}

Let us end this section with the following clarification on Definition \ref{Dgen}.

\begin{cor}\label{Rgen}
Let $H$ be a Hilbert $A$-module and ${\cal F}\subset H$ be a subset.

(1) Then 
${\cal F}\subset \overline{\{z\cdot a: z\in {\cal F},\, a\in A\}}.$

(2) If $H$ is algebraically finitely generated, then there are $x_1, x_2,..., x_n\in H$
such that $H=\{\sum_{i=1}^n x_i a_i: a_i\in A\}$ (see Definition \ref{Dgen}).
\end{cor}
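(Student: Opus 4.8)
The plan is to treat the two parts separately, each by reducing to a fact already in place above, with the whole weight of the argument falling on part (2).

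For (1), I would fix $x\in {\cal F}$ and prove the sharper statement that $x\in\overline{xA}$; this suffices, since $xA$ is one of the pieces making up the set $\{z\cdot a: z\in {\cal F},\, a\in A\}$, so $x\in\overline{xA}\subseteq\overline{\{z\cdot a: z\in {\cal F},\, a\in A\}}$, and $x$ was arbitrary. To see $x\in\overline{xA}$ I would invoke Lemma \ref{Lpolord}: taking $\alpha=1/4$ it produces $y\in\overline{xA}$ with $x=y\cdot\la x,x\ra^{1/4}$. Writing $y=\lim_n x c_n$ with $c_n\in A$ gives $x=y\cdot\la x,x\ra^{1/4}=\lim_n x\,(c_n\la x,x\ra^{1/4})$, and each $c_n\la x,x\ra^{1/4}\in A$, so $x\in\overline{xA}$. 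Alternatively one can argue directly with the approximate unit: setting $a=\la x,x\ra$ and $u_n=a(1/n+a)^{-1}\in A$, one has $\la x-xu_n,\,x-xu_n\ra=a(1-u_n)^2=\tfrac{1}{n^2}a(1/n+a)^{-2}$, whose norm is at most $\tfrac{1}{4n}\to 0$, so again $x=\lim_n xu_n\in\overline{xA}$.

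For (2), the real point is to upgrade the coefficients from $\tilde A$ (as allowed in Definition \ref{Dgen}) to honest elements of $A$. First I would apply Proposition \ref{Probert}: since $H$ is algebraically finitely generated, $K(H)$ is unital, i.e.\ $1_H\in K(H)$. Then I would rerun the argument in the proof of Proposition \ref{PH}. Choose $T\in F(H)$ with $\|1_H-T\|<1/4$, so that $T$ is invertible, and write $T=\sum_{j=1}^n\theta_{x_j,\zeta_j}$ for suitable $x_j,\zeta_j\in H$. For every $\xi\in H$ one has $T(\xi)=\sum_{j=1}^n x_j\la\zeta_j,\xi\ra$, and since each $\la\zeta_j,\xi\ra\in A$ this lies in $\{\sum_{j=1}^n x_j a_j: a_j\in A\}$. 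Invertibility of $T$ gives $H=TH\subseteq\{\sum_{j=1}^n x_j a_j: a_j\in A\}\subseteq H$, whence $H=\{\sum_{j=1}^n x_j a_j: a_j\in A\}$, which is precisely the asserted form with coefficients in $A$.

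The main obstacle, and the genuine content of the corollary, is exactly this last upgrade in part (2): the definition of algebraic generation only furnishes coefficients in $\tilde A$, and the offending scalar parts $\lambda_j 1\in\tilde A\setminus A$ cannot in general be absorbed by naive manipulation of the given generators. What rescues the situation is that once $K(H)$ is known to be unital, $1_H$ may be approximated by, and hence effectively replaced by, an invertible finite-rank module map whose range visibly sits inside a finitely generated submodule with coefficients in $A$ (because $\theta_{x_j,\zeta_j}(\xi)=x_j\la\zeta_j,\xi\ra$ with $\la\zeta_j,\xi\ra\in A$). Part (1) is the soft ingredient guaranteeing no loss in passing between $x$ and $xA$, while the substance rests on Propositions \ref{Probert} and \ref{PH}.
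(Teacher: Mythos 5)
Your proposal is correct, but part (2) takes a genuinely different and much heavier route than the paper's. For (1) you argue essentially as the paper does: Lemma \ref{Lpolord} gives $x=y\la x,x\ra^{1/4}$ with $y\in\overline{xA}$, and a norm limit (your direct estimate $\|a(1-u_n)^2\|\le 1/(4n)$ also checks out) yields $x\in\overline{xA}$; the paper instead multiplies $z=y\la z,z\ra^{\alpha}$ by an approximate identity $e_\lambda$ of $A$ and uses $\la z,z\ra^{\alpha}e_\lambda\to\la z,z\ra^{\alpha}$ in norm --- the same soft fact. For (2), however, the paper never goes near $K(H)$: it applies Lemma \ref{Lpolord} to each generator, writing $x_i=y_i\la x_i,x_i\ra^{\alpha}$, and observes that since $\la x_i,x_i\ra^{\alpha}\in A$ and $A$ is an ideal in $\tilde A$, one has $x_i\tilde A=y_i\la x_i,x_i\ra^{\alpha}\tilde A\subseteq y_iA$, whence $H=\sum_i x_i\tilde A\subseteq\sum_i y_iA\subseteq H$ and so $H=\sum_i y_iA$. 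Thus the upgrade from $\tilde A$-coefficients to $A$-coefficients, which you describe as resistant to ``naive manipulation of the given generators,'' is in fact achieved by exactly such a manipulation --- a mild change of generators, replacing each $x_i$ by its ``polar part'' $y_i$ --- with no unitality of $K(H)$ required. Your route (Proposition \ref{Probert} to get $1_H\in K(H)$, then the Neumann-series argument of Proposition \ref{PH} producing an invertible $T=\sum_j\theta_{x_j,\zeta_j}$ with $T(H)=H$ and $T(\xi)=\sum_j x_j\la\zeta_j,\xi\ra$, the coefficients $\la\zeta_j,\xi\ra$ automatically lying in $A$) is valid and non-circular, since Proposition \ref{Probert} precedes the corollary and does not use it; it even has the small structural bonus of exhibiting the generating set through an invertible element of $K(H)$. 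But it imports the full weight of Theorem \ref{1proj} (polar decomposition in $(K(H_1\oplus H))^{**}$, the orthogonal-summand analysis) via Proposition \ref{Probert}, where a two-line application of Lemma \ref{Lpolord} suffices.
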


\begin{proof}
For (1), let $z\in {\cal F}.$  By Lemma \ref{Lpolord}, we may write
$z=y\la z, z\ra^\af$ for some $0<\af<1/2$ and $y\in H.$
Let $\{e_\lambda\}\subset A$ be an approximate identity.
Then  $z e_\lambda=y\la z, z \ra^\af e_\lambda\to   y\la z, z\ra=z.$
Hence $z\in \overline{\{z\cdot a: z\in {\cal F},\, a\in A\}}$
(So in 
Definition \ref{Dgen}, ${\cal F}$ is in the closure of $\{za: z\in {\cal F}\andeqn a\in A\}.$)

For (2), suppose that $H=\sum_{i=1}^n x_i{\tilde A}.$ 
By Lemma \ref{Lpolord}, we may write
$x_i=y_i \la x_i, x_i\ra^\af$ for some $0<\af<1/2$ and $y_i\in H,$
$1\le i\le n.$
Let $H_0=\sum_{i=1}^n y_i A\subset H.$
Since $x_i=y_i\la x_i, x_i\ra^\af\in H_0,$ $H\subset H_0.$
It follows that $H=\sum_{i=1}^n y_i A.$
\end{proof}

%


\section{Countably generated Hilbert modules}

Let $A$ be a \CA\, and $H_1$ and $H_2$ be a pair of Hilbert $A$-modules. 
In  this section  we  discuss the question when  $H_1$ is unitarily equivalent to a Hilbert $A$-submodule of $H_2.$ 
The main result of this section is Theorem \ref{PQ}  which 
provide some answer to the question. For the connection to Cuntz semigroup, see Corollary \ref{pq}.

\begin{lem}\label{Lappunit}
Let $A$ be a \CA\, and $H$ a Hilbert $A$-module.
Let $\{E_\lambda\}$ be an approximate identity for $K(H).$ Then, for any 
$\ep>0$ and any finite subset ${\cal F}\subset H,$ there is $\lambda_0$
 such 
that, for all $\lambda\ge \lambda_0,$ 
\beq
\|E_\lambda(x)-x\|<\ep.
\eneq

\end{lem}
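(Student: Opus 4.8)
The plan is to reduce the statement to the nondegeneracy of the left action of $K(H)$ on $H$, i.e.\ to the identity $H=\overline{K(H)H}$, and then to invoke the usual behaviour of an approximate identity on a nondegenerate module.

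First I would show that every $x\in H$ lies in $\overline{K(H)x}$. Write $a=\langle x,x\rangle\in A_+$ and recall that the rank-one map $\theta_{x,x}\in K(H)$ satisfies $\theta_{x,x}(x)=xa$, and inductively $\theta_{x,x}^k(x)=xa^k$ for every $k\ge 1$. Hence for a polynomial $p$ with $p(0)=0$ one has $p(\theta_{x,x})(x)=x\,p(a)$; since $g\mapsto g(\theta_{x,x})(x)$ and $g\mapsto x\,g(a)$ are both norm-continuous (of norm $\le\|x\|$) on $\{g\in C[0,\|a\|]:g(0)=0\}$ and agree on such polynomials, the identity extends to
\[
g(\theta_{x,x})(x)=x\,g(a)\qquad\text{for all continuous }g\text{ with }g(0)=0 .
\]
Taking $g_n(t)=\min(1,nt)$ gives $g_n(\theta_{x,x})\in K(H)$ and
\[
\|g_n(\theta_{x,x})(x)-x\|^2=\|(1-g_n(a))\,a\,(1-g_n(a))\|=\sup_{t\in[0,\|a\|]}(1-g_n(t))^2\,t\le 1/n\longrightarrow 0,
\]
so $x\in\overline{K(H)x}$ and therefore $H=\overline{K(H)H}$. (Alternatively one may use Lemma \ref{Lpolord}, or note that $x(a+1/n)^{-1}\in H$ and $\theta_{x(a+1/n)^{-1},x}(x)=x(a+1/n)^{-1}a\to x$.)

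Next I would run the standard approximate-identity estimate, using that $\{E_\lambda\}$ may be taken with $\|E_\lambda\|\le 1$ and that $\|E_\lambda T-T\|\to 0$ for every $T\in K(H)$. Fix $x$ in the finite set ${\cal F}$ and $\epsilon>0$. By the previous step choose $T\in K(H)$ with $\|Tx-x\|<\epsilon/3$. Since $\|E_\lambda T-T\|\to 0$, there is $\lambda_x$ such that $\|E_\lambda T-T\|\,\|x\|<\epsilon/3$ for all $\lambda\ge\lambda_x$, and then for such $\lambda$,
\[
\|E_\lambda x-x\|\le\|E_\lambda\|\,\|x-Tx\|+\|E_\lambda Tx-Tx\|+\|Tx-x\|<\epsilon/3+\epsilon/3+\epsilon/3=\epsilon .
\]
Because ${\cal F}$ is finite and the index set is directed, choosing $\lambda_0$ above all the $\lambda_x$ ($x\in{\cal F}$) yields the estimate uniformly over ${\cal F}$.

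The only genuine content is the nondegeneracy $H=\overline{K(H)H}$; once that is in hand the remainder is the routine module version of the fact that an approximate identity acts as a two-sided unit on the algebra it approximates. The step to be careful with is the functional-calculus identity $g(\theta_{x,x})(x)=x\,g(a)$, which has to be verified on polynomials vanishing at $0$ and then extended by continuity, so that the cut-off $g_n$ lands back in $K(H)$ and moves $x$ by an arbitrarily small amount.
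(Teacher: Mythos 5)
Your proof is correct, and its overall skeleton coincides with the paper's: approximate $x$ by $Tx$ for some $T\in K(H)$, then run the standard three-$\ep$ estimate $\|E_\lambda x-x\|\le\|E_\lambda\|\,\|x-Tx\|+\|E_\lambda Tx-Tx\|+\|Tx-x\|$ and use directedness of the index set to handle the finite set ${\cal F}$ uniformly. Where you differ is in how the operator $T$ is produced. The paper invokes its Lemma \ref{Lpolord} to factor $\xi_i=\zeta_i\la\xi_i,\xi_i\ra^{\af}$ with $\la\zeta_i,\zeta_i\ra=\la\xi_i,\xi_i\ra^{1-2\af}$, takes the rank-one operator $S_i=\theta_{\zeta_i,\zeta_i}$, and gets $\|S_i(\xi_i)-\xi_i\|<\ep/3$ by choosing $\af$ close to $1/2$; you instead establish the nondegeneracy $x\in\overline{K(H)x}$ by functional calculus on $\theta_{x,x}$, via the identity $g(\theta_{x,x})(x)=x\,g(a)$ (with $a=\la x,x\ra$ and $g(0)=0$), verified on polynomials vanishing at $0$ and extended by continuity, with the cut-offs $g_n(t)=\min(1,nt).$ Both devices work: your computation $\|g_n(\theta_{x,x})(x)-x\|^2=\|(1-g_n(a))a(1-g_n(a))\|\le 1/n$ checks out, and so does your parenthetical alternative $\theta_{x(a+1/n)^{-1},x}(x)=x(a+1/n)^{-1}a\to x,$ which is essentially the same estimate in disguise. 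Your route is self-contained and avoids Lemma \ref{Lpolord}, while the paper's route reuses a factorization lemma it needs elsewhere anyway (e.g., in Corollary \ref{Rgen} and Proposition \ref{TKHsigma}) and produces the slightly sharper structural fact that $T$ may be taken rank one. One small convention point: you say $\{E_\lambda\}$ ``may be taken'' with $\|E_\lambda\|\le 1$, but the net is given in the statement; this is harmless because under the standard definition an approximate identity consists of positive contractions, a bound the paper's own final estimate also uses implicitly.
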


\begin{proof}
Let ${\cal F}=\{\xi_1, \xi_2,...,\xi_k\}$ and $\ep>0.$
Without loss of generality, we may assume that $0\not=\|\xi_i\|\le 1,$ $1\le i\le k.$
There is $1/3<\af<1/2$ such that
\beq
\|\la \xi_i, \xi_i\ra^{1-\af}-\la \xi_i,\xi_i\ra^\af\|=\|\la \xi_i, \xi_i\ra^\af(\la \xi_i,\xi_i\ra^{1-2\af}-1)\|<\ep/3,\,\,\,1\le i\le k.
\eneq
By Lemma \ref{Lpolord}, write $\xi_i=\zeta_i\la \xi_i, \xi_i\ra^\af$ with $\la \zeta_i, \zeta_i\ra=\la \xi_i, \xi_i\ra^{1-2\af},$ $1\le i\le k.$
Put $S_i=\theta_{\zeta_i, \zeta_i},$ $1\le i\le k.$ 
Then, for $1\le i\le k,$ 
\beq\nonumber
\|S_i(\xi_i)-\xi_i\|=\|\zeta_i\la \zeta_i, \xi_i\ra-\xi_i\|=\|\zeta_i\la \xi_i,\xi_i\ra^{1-\af}-\zeta_i\la \xi_i, \xi_i\ra^\af\|<\ep/3.
\eneq
There exists $\lambda_0$ such that, for all $\lambda\ge \lambda_0,$
\beq
\|E_\lambda S_i-S_i\|<\ep/3,\,\,1\le i\le k.
\eneq
It follows that, for all $\lambda\ge \lambda_0,$
\beq\nonumber
\|E_\lambda(\xi_i)-\xi_i\|&\le &\|E_\lambda(\xi_i)-E_\lambda S_i(\xi_i)\|+\|E_\lambda S_i(\xi_i)-S_i(\xi_i)\|+
\|S_i(\xi_i)-\xi_i\|\\\nonumber
&<&\|E_\lambda\|\|\xi_i-S_i(\xi_i)\|+\ep/3+\ep/3<\ep,\,\,\hspace{0.2in}1\le i\le k.
\eneq
\end{proof}

At least part of the following is known, in fact,  (1) $\Leftrightarrow$ (2) is known as Corollary 1.5 of \cite{MP}. 
We also think that the rest of them are known.
Note that even $H$ is countably generated 
it may not be separable when $A$ is not separable.

\begin{prop}[cf. Corollary 1.5 of \cite{MP}]\label{TKHsigma}
Let $A$ be a \CA\, and $H$ be a Hilbert $A$-module.
Then the following are equivalent.

(1) $H$ is countably generated,

(2) $K(H)$ is $\sigma$-unital and 

(3) There exists an increasing sequence of module maps $E_n\in K(H)_+$ with $\|E_n\|\le 1$ such that
\beq
\lim_{n\to\infty} \|E_nx-x\|=0\tforal x\in H.
\eneq

\end{prop}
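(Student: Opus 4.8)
The plan is to prove the cycle $(1)\Rightarrow(2)\Rightarrow(3)\Rightarrow(1)$, leaning on Lemma \ref{Lappunit} to convert statements about approximate identities of $K(H)$ into pointwise statements on $H$. For $(1)\Rightarrow(2)$ I would fix a countable generating set $\{\xi_n\}\subset H$ with $\|\xi_n\|\le 1$ and form the positive element $b=\sum_{n=1}^\infty 2^{-n}\theta_{\xi_n,\xi_n}\in K(H)_+$, the series converging in norm since $\|\theta_{\xi_n,\xi_n}\|\le 1$. The goal is to show $b$ is strictly positive, so that $K(H)$ is $\sigma$-unital. Writing $H_b=\overline{bH}$ and using the correspondence between Hilbert submodules and hereditary \SCA s (as in Lemma 2.13 of \cite{Lninj}), one has $K(H_b)=\overline{bK(H)b}$. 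Since $0\le \theta_{\xi_m,\xi_m}\le 2^m b$, hereditariness gives $\theta_{\xi_m,\xi_m}\in\overline{bK(H)b}=K(H_b)$, so the range of $\theta_{\xi_m,\xi_m}$ lies in $H_b$; a short computation with Lemma \ref{Lpolord} (as in Corollary \ref{Rgen}) identifies $\overline{\theta_{\xi_m,\xi_m}H}=\overline{\xi_m A}$. Hence $\overline{\xi_m A}\subseteq H_b$ for every $m$, and as $\{\xi_m\}$ generates $H$ we conclude $H_b=H$, i.e. $\overline{bK(H)b}=K(H)$ and $b$ is strictly positive.

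For $(2)\Rightarrow(3)$, a $\sigma$-unital \CA\ admits an increasing sequential approximate identity $\{E_n\}\subset K(H)_+$ with $\|E_n\|\le 1$; applying Lemma \ref{Lappunit} to this sequence and to each singleton $\{x\}$ gives $\|E_n x-x\|\to 0$ for every $x\in H$, which is exactly (3). For $(3)\Rightarrow(1)$, given such $\{E_n\}$ I would approximate each $E_n$ in norm by a finite-rank map $T_n=\sum_{j=1}^{k_n}\theta_{x_{n,j},y_{n,j}}\in F(H)$ with $\|E_n-T_n\|<1/n$. Then for every $x\in H$ one has $\|T_n x-x\|\le\|E_n-T_n\|\,\|x\|+\|E_n x-x\|\to 0$, while $T_n x$ lies in the linear span of $\{x_{n,j}a:a\in A\}$. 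Thus the countable set $\mathcal F=\{x_{n,j}:n\ge1,\,1\le j\le k_n\}$ generates $H$, so $H$ is countably generated.

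The main obstacle is the construction in $(1)\Rightarrow(2)$, namely showing that the single element $b$ is strictly positive. The delicate point is the passage from the order domination $\theta_{\xi_m,\xi_m}\le 2^m b$ to the range inclusion $\overline{\xi_m A}\subseteq\overline{bH}$; I would route this through the submodule/hereditary-subalgebra dictionary rather than attempt a direct Douglas-type factorization in the module, since such factorizations are not automatic for Hilbert \CA-modules. Once Lemma \ref{Lappunit} is available, the remaining two directions are essentially standard \CA\ and Hilbert-module bookkeeping.
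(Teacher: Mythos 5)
Your proposal is correct, but it takes a genuinely different route from the paper's. The paper does not prove (1)$\Leftrightarrow$(2) at all: it cites Corollary 1.5 of \cite{MP} for that equivalence and supplements it with two direct arguments, (1)$\Rightarrow$(3) and (3)$\Rightarrow$(2). Its proof of (1)$\Rightarrow$(3) goes straight from a generating sequence $\{x_i\}$ to the increasing $E_n$: using Lemma \ref{Lpolord} it writes $x_i=\xi_{i,n}\langle x_i,x_i\rangle^{\alpha_n}$ and checks with explicit $1/n^2$-estimates that $\theta_{\xi_{i,n},\xi_{i,n}}$ almost fixes every $x_ia$ with $\|a\|\le n$ --- the sets ${\cal F}_n$ there are \emph{not} finite, so Lemma \ref{Lappunit} cannot be invoked and uniform estimates are unavoidable --- and then selects $E_n$ from an approximate identity nearly dominating these $\theta$'s. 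Your cycle instead makes the result self-contained: your (1)$\Rightarrow$(2), via $b=\sum_n 2^{-n}\theta_{\xi_n,\xi_n}$ and the submodule/hereditary-subalgebra dictionary, essentially reproves Corollary 1.5 of \cite{MP}; the passage from $\theta_{\xi_m,\xi_m}\le 2^m b$ through $K(\overline{bH})=\overline{bK(H)b}$ (the small computation $\theta_{bu,bv}=b\,\theta_{u,v}\,b$ plus density, consistent with Lemma 2.13 of \cite{Lninj}) is a legitimate way to dodge the unavailable Douglas-type factorization, and once you know $\overline{bH}=H$ you could even shortcut by quoting Lemma 1.3 of \cite{MP} (used in Proposition \ref{cdense}(1)) to get strict positivity. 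Your (2)$\Rightarrow$(3), using an increasing sequential approximate identity such as $b(b+1/n)^{-1}$ together with Lemma \ref{Lappunit} applied to singletons, sidesteps the uniformity issue the paper has to confront in its direct (1)$\Rightarrow$(3), while your (3)$\Rightarrow$(1) by finite-rank approximation of $E_n$ is the same kind of bookkeeping as the paper's (3)$\Rightarrow$(2). In short: you buy independence from the \cite{MP} citation at the cost of the strictly-positive-element construction; the paper buys a quantitative, citation-assisted proof in which the $E_n$ of (3) are produced directly from the generators without first manufacturing a strictly positive element.
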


\begin{proof}
As mentioned above, by Corollary 1.5 of \cite{MP}, (1) $\Leftrightarrow$ (2). 

(1) $\Rightarrow$ (3): Let $\{x_n\}$ be a sequence of elements in the unit ball of $H$ 
such that $H$ is generated by $\{x_n\}$ as Hilbert $A$-module.
Let ${\cal F}_n$ be the set of those elements with the form $x_ia,$ $1\le i\le n$ and $a\in A$ with 
$\|a\|\le n.$  Note that $\cup_{n=1}^\infty {\rm span}({\cal F}_n)$ is dense in 
$H.$ Warning: ${\cal F}_n$ is not a finite subset.

For each $n\in \N,$ there exists $0<1/3<\af_n<1/2$ such that, for any $0\le b\le 1$ in $A,$ 
\beq\label{TKsigma-5}
\|b^{1/2-\af_n}(1-b^{1-2\af_n})b^{\af_n}\|\le 1/n^2.
\eneq
By Lemma \ref{Lpolord}, choose $\xi_{i,n}\in \overline{x_i A}$ such that
$x_i=\xi_{i,n}\la x_i, x_i\ra^{\af_n}$ with $\la \xi_{i,n}, \xi_{i,n}\ra=\la x_i, x_i\ra^{1-2\af_n}.$ Put $b_i=\la x_i,x_i\ra,$
$n\in \N.$ 
Then
$$
\theta_{\xi_{i,n},\xi_{i,n}}(x_ia)=\xi_{i,n}\la \xi_{i,n}, \xi_{i,n}b_i^{\af_n}\ra a=
\xi_{i,n}\la \xi_{i,n}, \xi_{i,n}\ra b_i^{\af_n} a=\xi_{i,n}b_i^{1-\af_n} a.
$$
Then, for any $\|a\|\le n$ and $i,n\in \N,$  by \eqref{TKsigma-5},
\beq
\|\theta_{\xi_{i,n},\xi_{i,n}}(x_ia)-x_ia\|^2&=&\|\xi_{i,n}(b_i^{1-\af_n}-b_i^{\af_n})a\|^2=\|\xi_{i,n}(b_i^{1-2\af_n}-1)b_i^{\af_n}a\|^2\\
&=&
\|a^*b_i^{\af_n}(b_i^{1-2\af_n}-1)\la \xi_{i,n}, \xi_{i,n}\ra (b_i^{1-2\af_n}-1)b_i^{\af_n} a\|\\\label{TKsigma-6}
&=&\|b_i^{1/2-\af_n}(1-b_i^{1-2\af_n})b_i^{\af_n}a\|^2\le (1/n)^4 \|a\|^2<1/n^2.
\eneq
By using an approximate identity of $K(H),$ we obtain an increasing sequence 
of $\{E_n\}\in A_+$ with $\|E_n\|\le 1$ such that
\beq\label{TKsigma-10}
\|E_n \theta_{\xi_{i,n} \xi_{i,n}}-\theta_{\xi_{i,n}, \xi_{i,n}}\|<1/n^2,\,\, 1\le i\le n.
\eneq
It follows that, for any $m\ge n$ and $a\in A$ with $\|a\|\le n,$  by \eqref{TKsigma-6} and \eqref{TKsigma-10},
\beq\nonumber
\hspace{-0.3in}\|E_m(x_ia)-x_ia\|&\le &\|E_m(x_ia-\theta_{\xi_{i,m},\xi_{i,m}}(x_ia))\|+
\|E_m\circ \theta_{\xi_{i,m}, \xi_{i,m}}(x_ia)-\theta_{\xi_{i,m}, \xi_{i,m}}(x_ia)\|\\\nonumber
&&\hspace{0.4in}+
\|\theta_{\xi_{i,m} \xi_{i,m}}(x_ia)-x_ia\|\\
&<& 1/m+\|xa\|/m^2+1/m^2<2/m+1/m^2.
\eneq
Since $\cup_{i=1}^\infty {\rm span} ({\cal F}_i)$ is dense in $H,$ this proves (3). 

(3) $\Rightarrow$ (2): 
Let $\{E_n\}$ be an increasing  sequence in $K(H)_+$ with $\|E_n\|\le 1$ such that
\beq
\lim_{n\to\infty}\|E_n(x)-x\|=0\rforal x\in H.
\eneq
Fix $x_1, x_2,...,x_N, y_1,y_2,...,y_N\in H.$ Let $S=\sum_{j=1}^N \theta_{x_j,y_j}$ and 
$$M=N\cdot \max\|\{\|y_j\|; 1\le j\le N\}.$$ 
For any $\ep>0,$ there exists   $n_0\in \N$    such that
\beq
\|E_m(x_j)-x_j\|<{\ep\over{(M+1)}},\,\,1\le j\le N.
\eneq
whenever $m\ge n_0.$ 
Then, for any $m\ge n_0$ and, for any $z\in H,$  
\beq
\|E_m S(z)-S(z)\|=\|\sum_{j=1}^N (E_m x_j-x_j)\la y_j, z\ra\|\le ({\ep\over{ (M+1)}})\sum_{j=1}^m\|y_j\|\|z\|
<\|z\|\ep.
\eneq
It follows that $\lim_{n\to\infty}\|E_mS-S\|=0.$  Since $F(H)$ is dense in $K(H),$ this implies that $K(H)$ is $\sigma$-unital.
\end{proof}

\begin{df}\label{Dwdense}
Let $H$ be a Hilbert $A$-module and $G\subset H$ be a subset.
We say $G$ is $A$-weakly dense in $H,$ if, for any  finite subset 
${\cal F}
\subset H,$ $\ep>0,$ 
and any $x\in H,$ there is $g\in G$ 
such that
\beq
{\|} \la  (x-g), z\ra \|<\ep\tforal z\in {\cal F}.
\eneq
(see Remark \ref{remaweakdense}).
\end{df}

One may compare the next proposition with Lemma 1.3 of \cite{MP}.

\begin{prop}\label{cdense}
Let $A$ be a $\sigma$-unital \CA\, and $H$ be a Hilbert $A$-module.

(1)  Let $s\in K(H)_+.$
Then  $s$ is a strictly positive element of $K(H)$ if  
$s(H)$ is $A$-weakly dense in $H,$   and, if $s$ is a strictly positive element of $K(H),$
then $s(H)$ is dense in $H.$ 

(2) Let $H_1$ and $H_2$ be  Hilbert $A$-modules and $T\in K(H_1, H_2)$
such that $T(H_1)$ is $A$-weakly dense in $H_2,$ then $TT^*\in K(H_2)$ is a strictly positive element
of $K(H_2).$

\end{prop}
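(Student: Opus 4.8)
The plan is to establish for (1) the cycle of implications ``$s(H)$ is $A$-weakly dense $\Rightarrow$ $s$ is strictly positive $\Rightarrow$ $s(H)$ is norm dense $\Rightarrow$ $s(H)$ is $A$-weakly dense'', and then to reduce (2) to (1). The two easy links come first. If $s$ is strictly positive in $K(H),$ then since $f_\ep(t)t\to t$ uniformly on $[0,\|s\|]$ as $\ep\to 0,$ the increasing sequence $\{f_{1/n}(s)\}$ is an approximate identity for the hereditary \SCA\, $\overline{sK(H)s}=K(H);$ by Lemma \ref{Lappunit} it satisfies $\|f_{1/n}(s)(x)-x\|\to 0$ for every $x\in H.$ Writing $f_{1/n}(s)=s\cdot g_n(s)$ with $g_n(t)=f_{1/n}(t)/t$ continuous and vanishing at $0,$ we get $f_{1/n}(s)(x)=s(g_n(s)(x))\in s(H),$ so $s(H)$ is norm dense. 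Conversely, if $s(H)$ is norm dense then for any $x,y\in H$ we write $x=\lim_k s(w_k),$ whence $\theta_{x,y}=\lim_k s\,\theta_{w_k,y}\in \overline{sK(H)};$ as $F(H)$ is dense in $K(H)$ this gives $\overline{sK(H)}=K(H),$ which for $s\ge 0$ is exactly strict positivity. Since norm density trivially implies $A$-weak density, the whole content of (1) is the implication: $A$-weak density of $s(H)$ forces $s$ to be strictly positive.

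For that implication I would pass to the self-dual completion. Let $H^\sim$ be the self-dual Hilbert $A^{**}$-module of Definition \ref{DHsim} and let $\tilde s\in B(H^\sim)_+$ be the canonical extension of $s;$ recall from \ref{DHsim} that $B(H^\sim)$ is a $W^*$-algebra containing $K(H),$ $L(H)=M(K(H))$ and $K(H)^{**}.$ The support projection $p=\lim_n f_{1/n}(s)$ of $s,$ computed in this $W^*$-algebra, is the range projection of $\tilde s,$ and $s$ is strictly positive in $K(H)$ precisely when $p=1_{H^\sim}$ (the image of $1_{K(H)^{**}}=1_H$), i.e. when $\tilde s$ is injective on $H^\sim.$ So it suffices to show $\Ker\tilde s=0.$ If $\eta\in H^\sim$ with $\tilde s\eta=0,$ then for every $x\in H$ we have $\la \eta, s x\ra=\la \tilde s\eta, x\ra=0,$ so $\eta$ is orthogonal to the $A$-weakly dense set $s(H).$ Since $H$ is weakly dense in $H^\sim,$ once we know $\la \eta, x\ra=0$ for all $x\in H$ we obtain $\la\eta,\eta\ra=0$ and $\eta=0,$ which completes the argument.

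The step I expect to be the main obstacle is exactly this promotion of orthogonality to $s(H)$ into orthogonality to all of $H,$ using only that $s(H)$ is dense in the $A$-weak topology of $H$ (whose seminorms $x\mapsto\|\la x,z\ra\|$ range over test vectors $z\in H$), while the potential obstruction $\eta$ lives in the larger module $H^\sim.$ Equivalently, one must show that a nonzero kernel projection $q=1-p$ of $\tilde s$ would produce a genuine $A$-weak witness against the density of $s(H)$ inside $H$ itself: picking $x\in H$ with $qx\ne 0$ (possible as $q\ne 0$ and $H$ is weakly dense), one has, for every $n$ in the closed submodule $\overline{s(H)}\subset pH^\sim,$ the identity $\la x-n,x\ra=\la px-n,px\ra+\la qx,qx\ra,$ and the task is to exhibit a finite family of $H$-test vectors on which no element of $s(H)$ can make all these inner products small simultaneously. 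I expect this to require the $\sigma$-unitality of $A$ (to work with sequences and the approximate identity of Lemma \ref{Lappunit}) together with a Kaplansky-type density comparison between the $A$-weak topology on $H$ and the normal topology on $H^\sim;$ the remaining details are formal. An equivalent phrasing of the same difficulty is that every norm-closed submodule of $H$ of the form $\overline{s(H)}$ is automatically $A$-weakly closed.

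Finally, (2) reduces to (1). Given $T\in K(H_1,H_2)$ we have $T^*\in K(H_2,H_1),$ hence $TT^*\in K(H_2)_+.$ The closed submodules $\overline{TT^*(H_2)}$ and $\overline{T(H_1)}$ coincide: indeed $\overline{TT^*(H_2)}=\overline{(TT^*)^{1/2}(H_2)}=\overline{T(H_1)},$ the last equality coming from the polar decomposition of $T$ in $B(H^\sim).$ Because the $A$-weak closure of any subset depends only on its norm closure (the norm closure always lies in the $A$-weak closure, so both sets have the same $A$-weak closure), the hypothesis that $T(H_1)$ is $A$-weakly dense in $H_2$ yields that $TT^*(H_2)$ is $A$-weakly dense in $H_2$ as well. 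Applying part (1) to $s=TT^*\in K(H_2)_+$ then shows that $TT^*$ is a strictly positive element of $K(H_2).$
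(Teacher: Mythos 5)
Your peripheral arguments are sound: the link ``strictly positive $\Rightarrow$ $s(H)$ norm dense'' via $f_{1/n}(s)=s\,g_n(s)$ and Lemma \ref{Lappunit} is a correct self-contained substitute for the paper's citation of Lemma 1.3 of \cite{MP}; the converse via $\theta_{x,y}\in\overline{sK(H)}$ is standard; and your part (2) is a valid reduction to (1), slightly different from the paper's, which factors $T=(TT^*)^{1/4}\cdot(TT^*)^{1/4}V$ through the polar decomposition in $K(H_1\oplus H_2)^{**}$ and applies (1) to $(TT^*)^{1/4},$ whereas you show $\overline{TT^*(H_2)}=\overline{T(H_1)}$ and observe that $A$-weak closures depend only on norm closures. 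But the implication ``$s(H)$ $A$-weakly dense $\Rightarrow$ $s$ strictly positive'' is not proved, and it is the entire content of the proposition. Two things go wrong in your $H^\sim$ route. First, the asserted equivalence ``$s$ strictly positive in $K(H)$ iff $\tilde s$ is injective on $H^\sim$'' is unproved in the direction you need: strict positivity says the support projection of $s$ is the unit of $K(H)^{**}$ (equivalently $\phi(s)>0$ for every state $\phi$ of $K(H)$), whereas injectivity of $\tilde s$ only sees the weak closure of $K(H)$ inside the $W^*$-algebra $B(H^\sim),$ which is in general a proper quotient $zK(H)^{**}$ of $K(H)^{**};$ a kernel projection can vanish there without vanishing in $K(H)^{**}.$ Second, and decisively, the promotion of $\la\eta,sx\ra=0$ for all $x\in H$ to $\la\eta,x\ra=0$ for all $x\in H$ is exactly the proposition restated: the seminorms defining the $A$-weak topology (Definition \ref{Dwdense}) pair only against test vectors $z\in H,$ so for $\eta\in H^\sim\setminus H$ the functional $x\mapsto\la\eta,x\ra$ is not $A$-weakly continuous, and orthogonality to $s(H)$ does not pass to its $A$-weak closure. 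You acknowledge this, and your ``equivalent phrasing'' --- that $\overline{s(H)}$ is automatically $A$-weakly closed --- is indeed equivalent, which is precisely why declaring the remaining details ``formal'' is not legitimate: no Kaplansky-type comparison between the $A$-weak topology on $H$ and the weak topology of $H^\sim$ is supplied, and no use of $\sigma$-unitality is actually made.

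For contrast, the paper closes this step by a quantitative argument that never leaves $H$ and $K(H)$ and never hunts for an obstruction vector. Assume $\|s\|\le 1$ and set $\eta_n=\|s^{1/n}s-s\|\to 0.$ Fix unit vectors $\xi_1,\xi_2\in H$ and $n\in\N;$ for unit vectors $x,y\in H$ one uses $A$-weak density of $s(H)$ to choose $\xi$ with $\la s(\xi)-\xi_1,\cdot\ra$ small against the finitely many test vectors determined by $y$ and $n$ (the approximant is allowed to depend on the test vector and on $n$). Since $s^{1/n}\theta_{s(\xi),\xi_2}=\theta_{s^{1/n}s(\xi),\xi_2}$ differs from $\theta_{s(\xi),\xi_2}$ by an error governed by $\eta_n,$ one obtains $\|\la s^{1/n}\theta_{\xi_1,\xi_2}(x)-\theta_{\xi_1,\xi_2}(x),y\ra\|<\ep+\eta_n;$ letting $\ep\to 0$ for fixed $n$ and taking suprema over the unit balls of $y$ and then $x$ converts this weak smallness, uniform over test vectors from $H,$ into the operator-norm estimate $\|s^{1/n}\theta_{\xi_1,\xi_2}-\theta_{\xi_1,\xi_2}\|\le\eta_n.$ By density of $F(H)$ in $K(H)$ this gives $\lim_n\|s^{1/n}k-k\|=0$ for every $k\in K(H),$ hence $pk=k$ where $p$ is the open projection with $s^{1/n}\nearrow p;$ thus $p$ is the open projection of the hereditary subalgebra $K(H)$ itself and $s$ is strictly positive. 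This is the mechanism your missing step would have to reproduce: rather than showing a single hypothetical kernel vector in $H^\sim$ is $A$-weakly visible from $H,$ test the density hypothesis against all rank-one operators and exploit the functional-calculus rate $\eta_n$ to pass from weak to norm estimates.
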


\begin{proof}
For (1),  we note that, by Lemma 1.3 of \cite{MP}, $s$ is a strictly positive element if and only $s(H)$ is dense in $H.$ 
%

We now suppose that    $s(H)$ is $A$-weakly dense in $H.$ 
{{\Wlog, we may assume that $\|s\|  \le 1.$}}
Let $p$ be the open projection associated with $s,$ i.e., $s^{1/n}\nearrow p.$
Put $\eta_n=\|s^{1/n}s-s\|,$ $n\in \N.$ 
Then $\lim_{n\to\infty}\eta_n=0.$

Fix  $\ep>0. $  Let $x, y\in H$ such that $\|x\|,\, \|y\|\le 1.$
Choose $\xi_1, \xi_2\in H$ such that $\|\xi_1\|, \, \|\xi_2\|\le 1.$
Then, there is $\xi\in H$ such that
\beq
\| \la s(\xi)-\xi_1,  y\ra \| <\ep/2.
\eneq
Then
\beq
\| \la \theta_{\xi_1, \xi_2}(x)-\theta_{s(\xi), \xi_2}(x), y\ra\|=\|{{\la}} \xi_1\la \xi_2, x\ra, y\ra-\la s(\xi)\la \xi_2, x\ra , y\ra \|\\
=\| \la  \xi_2, x\ra^* \la \xi_1-s(\xi), y\ra \|<\ep/2.
\eneq
We estimate that 
\beq\nonumber
\| \la s^{1/n} \theta_{\xi_1,\xi_2}(x)-\theta_{\xi_1, \xi_2}(x), y\ra\|&=&\| \la s^{1/n} (\theta_{\xi_1, \xi_2}-\theta_{s(\xi), \xi_2})(x), y\ra\| \\\nonumber
&&\hspace{-0.6in}+\|\la (\theta_{s^{1/n}s(\xi), \xi_2}(x)-\theta_{s(\xi_1), \xi_2}(x)), y\ra \|+ \| \la (\theta_{s(\xi), \xi_2}(x)-\theta_{\xi_1, \xi_2}(x)), y\ra \|\\\nonumber
&<&\|s^{1/n}\|\|(\theta_{\xi_1, \xi_2}-\theta_{s(\xi), \xi_2})(x), y\ra\| +\eta_n+\ep/2\\
&<&\ep+\eta_n.
\eneq
Fix $n\in \N,$ 
let $\ep\to 0.$ We obtain that
$$
\| \la s^{1/n}\theta_{\xi_1,\xi_2}(x)-\theta_{\xi_1, \xi_2}(x), y\ra\|\le \eta_n.
$$
This holds for all $y\in H$ with $\|y\|\le 1.$
It follows that
\beq\label{36-1}
\|s^{1/n} \theta_{\xi_1,\xi_2}(x)-\theta_{\xi_1, \xi_2}(x)\|<\eta_n,\,\,n\in \N.
\eneq
Since \eqref{36-1}  holds for every  $x\in H$ with $\|x\|\le 1,$ we obtain that 
\beq
\|s^{1/n} \theta_{\xi_1,\xi_2}-\theta_{\xi_1, \xi_2}\|<\eta_n,\,\,n\in \N.
\eneq
Since linear combinations of module maps of the form $\theta_{\xi_1, \xi_2}$ is dense in $K(H),$ 
we obtain, for any $k\in K(H),$
\beq
\lim_{n\to\infty}\|s^{1/n}k-k\|=0=\lim_{n\to\infty}\|k^{1/2} (1-s^{1/n})k^{1/2}\|. 
\eneq
It follows that $\|k^{1/2}(1-p)k^{1/2}\|=0,$ or $pk=k$ for all $k\in K(H).$
Thus
$p$ is the open projection associated with the hereditary \SCA\, $K(H)$ (see Lemma 2.13 of \cite{Lninj}). It then follows that $s$
is a strictly positive element of $K(H).$

For (2), let $H=H_1\oplus H_2.$   Define $\tilde T\in K(H)$ by 
$\tilde T(h_1\oplus h_2)=0\oplus T(h_1)$ for $h_1\in H_1$ and $h_2\in H_2.$
Let $\tilde T=V(\tilde T^*\tilde T)^{1/2}$ be the polar decomposition 
of $\tilde T$ in $K(H)^{**}.$  Then we may write $T=\tilde T|_{H_1}=V(T^*T)^{1/2}.$
Since $T\in K(H_1, H_2),$ $T^*\in K(H_2, H_1).$

Recall that $T^*=|T|V^*,$ $V|T|V^*=|TT^*|^{1/2}$ and $V^*|TT^*|^{1/2}=T^*.$
Hence $T=(TT^*)^{1/2} V.$ 
We then write  $T=(TT^*)^{1/4}(TT^*)^{1/4}V.$  Note that $(TT^*)^{1/4}V\in K(H)$
and $(TT^*)^{1/4}V(H)\subset H_2.$  It follows that 
$(TT^*)^{{1/4}}(H_2)$ is  $A$ -weakly dense in $H_2,$ as $T(H)=(TT^*)^{1/4}((TT^*)^{1/4}V(H))$ is 
$A$-weakly dense in 
$H_2.$ 
 By (1), this implies that $(TT^*)^{1/4}$ is a strictly positive element 
of $K(H_2).$ Hence $TT^*$ is a  strictly positive element 
of $K(H_2).$
\end{proof}

\begin{NN}\label{RN}
{\rm Let $A$ be a \CA, $H_1, H_2$ be Hilbert $A$-modules and $T\in B(H_1, H_2).$ 
Note $T^*\in B(H_2, H_1^\sharp)$ and, for any $y\in H_1,$ $T^*(y)(x)=\la y, Tx\ra $ for all
$x\in H_1.$  We may view $H_1\subset H_1^\sharp.$ We say $T^*(H_2)$ is dense 
in $H_1,$ if $\overline{T^*(H_2)}\supset H_1$ (even though $T^*(H_2)$ may not be in $H_1$).
Recall that $TK_0^*\in K(H_1, H_2),$ for any $K_0\in K(H_1)$ which has an adjoint
$(TK_0^*)^*=K_0T^*\in K(H_2, H_1).$ 
If we write $T\in LM(K(H_1, H_2)),$ then,
in general, $T^*\in RM(K(H_2, H_1)).$   In particular, $K_0T^*\in K(H_2, H_1)$
for all $K_0\in K(H_1).$ These will be used in the next theorem.}

\end{NN}

The equivalence of (a) and (d) in part (2) of next theorem  is known (see Theorem 4.1 of \cite{mF}). 

\begin{thm}\label{PQ}
Let $A$ be a \CA\, and $H_1$ and $H_2$ be Hilbert $A$-modules.
Suppose that $H_2$ is countably generated.

(1) Then $H_2$ is unitarily equivalent to a Hilbert $A$-submodule of $H_1$ if and only 
if there is  bounded module map
$T: H_1\to H_2$ such that $T(H_1)$ is $A$-weakly dense in $H_2.$ 

(2)   Suppose that $H_1$ is also countably generated.
Then the following are equivalent:

(a) $H_1$ and $H_2$ are unitarily equivalent as Hilbert $A$-module;

(b) there 
exists a bounded module map $T\in B(H_1, H_2)$ such that $T(H_1)$ is
$A$-weakly  dense in $H_2$ and $(K_0T^*)(H_2)$ is $A$-weakly dense in 
$H_1$ for any strictly positive element $K_0\in K(H_1);$

(c) there 
exists a bounded module map $T\in B(H_1, H_2)$ such that $T(H_1)$ is dense in $H_2$ and $T^*(H_2)$ is dense in 
$H_1$ (see  \ref{RN});

(d) There is an invertible bounded module map $T\in B(H_1, H_2).$ 
\end{thm}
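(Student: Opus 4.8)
The common engine for both parts is the following packaging of Proposition~\ref{cdense} with a polar decomposition. Suppose $S\in K(H_1,H_2)$ has $A$-weakly dense range; then by Proposition~\ref{cdense}(2) the positive element $SS^*\in K(H_2)$ is strictly positive. Writing the polar decomposition $S=V|S|$ in $(K(H_1\oplus H_2))^{**}$ with $|S|=(S^*S)^{1/2}\in K(H_1)$, I would define a map on the dense submodule $|S|(H_1)$ of $\overline{|S|(H_1)}=:H_0\subseteq H_1$ by $U(|S|\xi)=S\xi$. The identity $\la U|S|\xi,U|S|\eta\ra=\la S\xi,S\eta\ra=\la |S|\xi,|S|\eta\ra$ shows $U$ preserves the inner product, hence extends to an isometric module map $H_0\to H_2$; its range is closed and contains $S(H_1)$, which is norm dense in $\overline{SS^*(H_2)}=H_2$ by Proposition~\ref{cdense}(1). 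Thus $U\colon H_0\to H_2$ is a unitary. This single construction turns ``a compact operator with weakly dense range'' into ``$H_2$ is unitarily equivalent to a submodule of the domain,'' and it becomes an equivalence of the whole modules exactly when $S^*S$ is also strictly positive, so that $H_0=H_1$.

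For part (1), the direction ($\Rightarrow$) is easy: if $U\colon H_2\to H_2'\subseteq H_1$ is a unitary onto a submodule, pick a strictly positive $e\in K(H_2')$ (available since $H_2'\cong H_2$ is countably generated, by Proposition~\ref{TKHsigma}); viewing $K(H_2')$ as a hereditary subalgebra of $K(H_1)$, the map $e$ sends $H_1$ into $H_2'$ with $e(H_1)\supseteq e(H_2')$ norm dense in $H_2'$ (Proposition~\ref{cdense}(1)), so $T:=U^{-1}e\colon H_1\to H_2$ has dense, hence $A$-weakly dense, range. For ($\Leftarrow$), the plan is to produce a single compact $S=TK_0\in K(H_1,H_2)$ with $A$-weakly dense range and invoke the engine above. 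Since $K_0$ must be a strictly positive element of $K(H_1)$ (so that $K_0(H_1)$ is norm dense and $T(K_0H_1)$ inherits the $A$-weak density of $T(H_1)$), and such a $K_0$ exists only when $K(H_1)$ is $\sigma$-unital, I would first reduce to the case that $H_1$ is countably generated by replacing $H_1$ with a countably generated submodule $H_1'\subseteq H_1$ for which $T(H_1')$ is still $A$-weakly dense in $H_2$; then $H_0\subseteq H_1'\subseteq H_1$ and $H_0\cong H_2$.

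For part (2) I would prove $(a)\Rightarrow(c)\Rightarrow(b)\Rightarrow(a)$ and quote Theorem~4.1 of \cite{mF} for $(a)\Leftrightarrow(d)$. Here $(a)\Rightarrow(c)$ is immediate (a unitary $U$ satisfies $U(H_1)=H_2$ and $U^*(H_2)=U^{-1}(H_2)=H_1$). For $(c)\Rightarrow(b)$: norm density gives $A$-weak density, and for strictly positive $K_0\in K(H_1)$, since $K_0T^*\in K(H_2,H_1)$ (see \ref{RN}) and $T^*(H_2)$ is dense in $H_1$ while $K_0(H_1)$ is dense (Proposition~\ref{cdense}(1)), the composite $(K_0T^*)(H_2)$ is norm dense, hence $A$-weakly dense, in $H_1$. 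The substantive implication is $(b)\Rightarrow(a)$: take $K_0$ strictly positive in $K(H_1)$ and set $S=TK_0$, so $S^*=K_0T^*$ by \ref{RN}; the first condition of $(b)$ together with density of $K_0(H_1)$ makes $S(H_1)$ $A$-weakly dense, so $SS^*$ is strictly positive, while the second condition of $(b)$ says $S^*(H_2)=(K_0T^*)(H_2)$ is $A$-weakly dense, so $S^*S$ is strictly positive; then the engine above (now with $H_0=H_1$) yields a unitary $H_1\to H_2$.

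The main obstacle is the reduction in part~(1)($\Leftarrow$) to a countably generated $H_1'$. Because a countably generated module need not be separable, $T(H_1')$ must be made $A$-weakly (not norm) dense, and one must select countably many ``weak preimages'': indexing over a countable generating set $\{x_k\}$ of $H_2$, finite subsets $Y'$ of such generators, and $n\in\N$, choose $h_{k,Y',n}\in H_1$ with $\|\la x_k-Th_{k,Y',n},y'\ra\|<1/n$ for all $y'\in Y'$, and let $H_1'$ be the (countably generated) submodule they generate. The delicate point is the bookkeeping needed to pass from weak approximation against generators to weak approximation against an arbitrary finite test set: expanding a general $x$ and arbitrary test vectors $z_j$ in the generators produces a cross term of the form $\la Th,z_j-y_j'\ra$, and controlling it forces one to keep the norms $\|Th_{k,Y',n}\|$ in check while refining the generator approximations. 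Once this reduction is in hand, everything else is a routine application of Proposition~\ref{cdense} and the polar-decomposition construction.
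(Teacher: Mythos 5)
Your architecture coincides with the paper's proof almost step for step: the same reduction of part (1) to a countably generated submodule built from countably many weak preimages of the generators, the same passage to a compact $S=TK_0$ with $K_0$ strictly positive (using Proposition \ref{TKHsigma} to get $\sigma$-unitality of $K(H_3)$), the same appeal to Proposition \ref{cdense} for strict positivity of $SS^*$ (and of $S^*S$ in part (2)), and a polar-decomposition ``engine'' that is the exact mirror of the paper's: you decompose $S=V|S|$ and define $U(|S|\xi)=S\xi$ on $\overline{|S|(H_1)}$, while the paper decomposes $(TK_0)^*=US^{1/2}$ and obtains $U\colon H_2\to H_3$; these are equivalent, and your versions of $(b)\Rightarrow(a)$ and $(c)\Rightarrow(b)$ match the paper's. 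The one real divergence is that you outsource $(a)\Leftrightarrow(d)$ to Theorem 4.1 of \cite{mF}, whereas the paper proves $(d)\Rightarrow(b)$ directly, using an approximate identity $\{E_n\}$ of $K(H_2)$ and the fact that $E_n(T^*)^{-1}=E_n(T^{-1})^*\in K(H)$; since the paper itself records the equivalence as known from \cite{mF}, your citation is legitimate, merely less self-contained.

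The one place you stop short is the reduction in the ``if'' direction of part (1), and your proposed way of closing it points in the wrong direction. You say one must ``keep the norms $\|Th_{k,Y',n}\|$ in check,'' but $A$-weak density gives no control whatsoever on the norms of weak approximants, and none is needed. Two observations close the gap. First, the set of $x\in H_2$ that are $A$-weakly approximable from the right-module linear span $G$ of the chosen $\{T(y_{n,k})\}$ is stable under right multiplication by $A$ and under sums, and it is norm-closed, because for a \emph{fixed} finite test set ${\cal F}$ the error $\|\la x-g,z\ra\|$ moves by at most $\|x-x'\|\,\|z\|$ when the target moves; hence it contains all of $H_2$ as soon as it contains the generators $x_n$, and the cross-term expansion of a general target that worries you never arises. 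Second, on the test-vector side your $y_{n,k}$ control pairings only against the generators, hence, by module-linearity $\la\,\cdot\,,x_jb\ra=\la\,\cdot\,,x_j\ra b$, against all finite module combinations of generators, but a priori not against an arbitrary $z\in H_2$; this is precisely where norm control would be needed and is unavailable. But look at how weak density is actually consumed: in the proof of Proposition \ref{cdense} the approximant $\xi$ is allowed to depend on the single test vector $y$, and the final step estimates $\|\la v,y\ra\|$ for a \emph{fixed} $v\in H_2$ over $y$ in the unit ball; since $y\mapsto\|\la v,y\ra\|$ is $\|v\|$-Lipschitz, the supremum over the norm-dense set of module combinations of generators already equals the supremum over the whole ball. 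So $A$-weak density tested only against that dense submodule suffices for everything downstream, and it is exactly what the countable family delivers with no norm bookkeeping (this is also what underwrites the paper's own brisk assertion at the corresponding point of its proof). With the reduction closed this way, the rest of your proposal is correct and is, in substance, the paper's proof.
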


\begin{proof}

For (1), the ``only if" part is clear. 
Let us  assumes that there is $T\in B(H_1, H_2)$ whose range is $A$-weakly dense in $H_2.$

Suppose that $H_2$ is generated by $\{x_1, x_2,...,x_n,...\}.$ 
For each $n\in \N,$ there exists a sequence $\{y_{n,k}\}$ in $H_2$ such that,
$$
\|\la T(y_{n,k})-x_n, x_i\ra\| <({1\over{k+n}})\max\{\|x_i\|: 1\le i\le n+k\},\,\, 1\le i\le n+k.
$$
Hence, for any fixed $n$ and,  any $y\in H,$ $\lim_{k\to \infty}\|\la T(y_{n,k})-x_n, y\ra \|=0.$
It follows that the set of linear combinations of  elements in $\{T(y_{n,k})a: n, k\in \N, a\in A\}$ is $A$-weakly dense in $H_2.$ 
Let $H_3\subset H_1$ be the closure of $A$-submodule  generated by 
$\{y_{n,k}: k, n\in \N\}.$ Then $H_3$ is a countably generated Hilbert $A$-submodule 
of $H_1$ such that $T(H_3)$ is $A$-weakly dense in $H_2.$

Put $H=H_3\oplus H_2.$ In what follows we may  identify $T$ with  the map $\tilde T(h_3\oplus h_2)=0\oplus T(h_3)$ for $h_3\in H_3$ and $h_2\in H_2,$   whenever it is convenient.
We may also view $K(H_2)$ and $K(H_3)$ as hereditary \SCA s of $K(H)$ (see Lemma 2.13 of \cite{Lninj}). 

Applying Corollary 1.5 of \cite{MP} (see 
Lemma \ref{TKHsigma} above for convenience), we obtain   strictly positive elements $K_0\in K(H_3)\subset K(H)$ and 
$L\in K(H_2)\subset K(H),$  respectively.
 If follows from (1) of Proposition  \ref{cdense} that $K_0(H_3)$ is dense in 
$H_3.$ 
%
%
Thus   $TK_0(H_3)$ is $A$-weakly dense in $H_2.$ 
Applying part (2) of Proposition \ref{cdense}, 
we conclude that 
 $S=(TK_0)(TK_0)^*$ is a strictly positive element of $K(H_2).$
 %
 %
%
%

%
Therefore the range projection $P_s$  of $S$ in $K(H_3)^{**}$ is the same as the range projection  
of $L.$ 
Write $(TK_0)^*=US^{1/2}$ in the polar decomposition of $(TK_0)^*$ in $K(H)^{**}.$ 
Note  that $(TK_0)^*=K_0^{1/2}(TK_0^{1/2})^*=US^{1/2}.$   
It follows that $U(S^{1/2}(H))\subset H_3.$ Since $S^{1/2}$ is a strictly positive element
of $K(H_2),$ $H_2=\overline{S^{1/2}H_2}.$  Hence $U(H_2)\subset H_3.$
Note that $UY\in {\overline{K_0\cdot K(H)}}$ for any 
$Y\in K(H).$   In particular, $US'U^*\in \overline{K_0 \cdot K(H) \cdot K_0}=K(H_3)$ for all $S'\in K(H_2).$
We also have $UH_2\subset H_3.$ 
Moreover
\beq
US^{1/n}U^*\nearrow UP_sU^*=Q,
\eneq
where $P_s$ is the range projection of $S$ (in 
$K(H)^{**}$) and $Q$ is an open projection of $K(H)^{**},$ and  $QUP_s=U.$ 
It follows that $U^*U=P_s$ and $UH_2\subset H_3$ is unitarily equivalent to $H_2.$ 
This proves (1)

For (2). Let us first show that (b) $\Rightarrow$  (a). 
We keep the notation used in the proof of part (1).
But since $H_1$ is now assumed to be countably generated, we choose $H_3=H_1.$ 
In particular, $K_0$ is a strictly positive element of $K(H_1).$
With notation in (1), it suffices to show that $U(H_2)$ is dense in $H_1.$
Since $K_0T^*(H)$ is $A$-weakly dense in $H_1,$ by Proposition \ref{cdense},
$K_0T^*TK_0$ is a strictly positive element in $K(H_1).$  By (1) of Proposition \ref{cdense},
$K_0T^*TK_0(H_1)$ is dense in $H_1.$  Because $TK_0(H_1)\subset H_2,$ this implies 
that $K_0T^*(H_2)$ is actually dense in $H_2.$ 

Since $K_0T^*=US^{1/2},$ and $K_0T^*(H_2)$ is dense in $H_1,$ $U(S^{1/2}(H_2))$ is dense in
$H_1.$ As in the proof of (1) above, $S^{1/2}(H_2)\subset H_2.$ Hence, indeed,
$U(H_2)$ is dense in $H_1.$

Now consider (c) $\Rightarrow$ (b). 
It suffices to show that $K_0T^*(H_2)$ is dense in $H_1.$

Fix $z\in H_1.$  Since $K_0$ is a strictly positive element, by  Proposition \ref{cdense}, $K_0(H_1)$ is dense in 
$H_1.$ There exists a sequence $z_n\in H_1$ such that
\beq
\lim_{n\to\infty} \|K_0(z_n)-z\|=0
\eneq
By the assumption of (c), for each $n\in \N,$ there exists a sequence of elements $y_{n,k}\in H_2$ such that
{{(we do not assume that $T^*(y_{n,k})\in H_1$)}}
\beq
\lim_{k\to\infty}\|T^*(y_{n,k})-z_n\|=0.
\eneq
Thus, one obtain a subsequence $\{y_{m(n)}\}\subset \{y_{n,k}: k,n\in \N\}$ such that
\beq
\lim_{n\to\infty} \|K_0T^*(y_{m(n)})-z\|=0.
\eneq
This shows that $K_0T^*(H_2)$ is dense in $H_1,$ as desired. 

It is clear that (a) $\Rightarrow$ (b), (a) $\Rightarrow$ (c) and (a) $\Rightarrow$ (d).

It remains to  show that (d) $\Rightarrow$ (b). 
We will apply the proof of (1) first. 
But since $H_1$ is countably generated, choose $H_3=H_1.$ 
Let $K_0$ be a strictly positive element of $K(H_1).$
Since $T$ is invertible, $T(H_1)$ is dense in $H_2.$ 
Therefore, it suffices to show that $K_0T^*(H_2)$ is $A$-weakly dense in $H_1.$
Let $H=H_1\oplus H_2.$ We will view $K(H_1)$ and $K(H_2)$ as hereditary \SCA s 
of $K(H).$   So $T$ is identified with the bounded module map 
$(h_1\oplus h_2)\mapsto  0\oplus T(h_1)$ and 
$T^{-1}$ is identified with the bounded module map 
$(h_1\oplus h_2)\mapsto T^{-1}(h_2)\oplus 0.$ 
In what follows, 
 we use the fact that $(T^{-1})^*=(T^*)^{-1}$ and 
we will also work in $H^\sim,$ if necessary (see Definition \ref{DHsim}).
In this way, we may write $T^*, (T^{-1})^*\in RM(K(H)).$ 
Hence, if $E\in K(H_2)\subset K(H),$ $E(T^*)^{-1}=E(T^{-1})^*\in K(H).$ 

Let $\{E_n\}$ be an approximate identity for $K(H_2).$ 
For any finite subset ${\cal F}\subset H_2$  in the unit ball of $H_2,$
and $\ep>0,$ by Lemma \ref{Lappunit}, there is  $n_0\in \N$ such
that, for all $n\ge n_0,$ 
\beq
\|T^{-1}E_n(x)-T^{-1}(x)\|<\ep/(\|T^*\|+1)(\|K_0\|+1)\rforal x\in {\cal F}.
\eneq
It follows that, for any $y\in H_1$ with $\|y\|\le 1,$
\beq
\|\la (E_n(T^*)^{-1}-(T^*)^{-1})(y), x\ra\|&=&\|\la y, (T^{-1}E_n-T^{-1})(x)\ra\|\\
&\le& \|y\|\|\|T^{-1}E_n(x)-T^{-1}(x)\|\\
&<&\ep/(\|T^*\|+1)(\|K_0\|+1)
\eneq
for all $x\in {\cal F}.$
Thus
\beq
\|\la T^*E_n(T^*)^{-1}(y)-y, x\ra \|&=&\|\la T^*E_n(T^*)^{-1}(y)-T^*(T^*)^{-1}(y), x\ra\|\\
&<&\ep/(\|K_0\|+1).
\eneq
Hence
\beq\label{37-10}
\|  \la K_0T^*E_n(T^*)^{-1}(y)-K_0(y), x\ra\|<\ep.
\eneq
Since $E_n(T^*)^{-1}=E_n(T^{-1})^*\in K(H),$  we have $E_n(T^*)^{-1}(H_1)\subset H_2.$ 
{{Also, by  (1) of Lemma \ref{cdense}, $K_0(H_1)$ is dense in $H_1.$}} 
It follows from \eqref{37-10} that $K_0T^*(H_2)$ is $A$-weakly dense in $H_1.$
%
%
%
\end{proof}

\begin{rem}
Theorem \ref{PQ} may provide a correction to 2.2 of \cite{Lninj} (which was not used there).

As the reader may expect that Theorem \ref{PQ} will lead some discussion about the Cuntz semigroups.
Indeed we  will discuss this in next section. But let us end this section with the following corollary.
\end{rem}

\begin{df}
Let $A$ be a \CA\, and  $b\in A_+.$ Denote by ${\rm Her}(b)=\overline{bAb}$ the hereditary \SCA\, of $A.$
Suppose also 
$a\in A_+.$
Let us write $a\simle b$ if there is $x\in A$ such that
$x^*x=a$ and $xx^*\in {\rm Her}(b)$ {{(see \cite{Cu1} and this notation in \cite{Lincsc}).}}
\end{df}

\begin{cor}\label{pq}
Let $A$ be a \CA\, and let  $a, b\in A_+.$   Suppose that
$H_1=\overline{aA}$ and $H_2=\overline{bA}.$ Then 
 $a\simle b,$ if there is $T\in
B(H_2, H_1)$  whose range is $A$-weakly dense, and if $a\simle b,$ then 
there is $T\in B(H_2, H_1)$ 
which has dense range.
\end{cor}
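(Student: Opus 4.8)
The plan is to handle the two implications separately, each time translating between the language of Hilbert modules and that of positive elements. The first implication will rest on Theorem \ref{PQ}(1), and the second will be an explicit construction of the module map.

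For the first assertion, suppose $T\in B(H_2,H_1)$ has $A$-weakly dense range. Since $H_1=\overline{aA}$ is singly generated, hence countably generated, I would invoke Theorem \ref{PQ}(1) with the roles of $H_1$ and $H_2$ interchanged: a bounded module map from $H_2$ onto an $A$-weakly dense subset of the countably generated module $H_1$ exhibits $H_1$ as unitarily equivalent to a Hilbert $A$-submodule of $H_2$. Let $U\colon \overline{aA}\to \overline{bA}$ implement this equivalence onto $U(\overline{aA})\subset \overline{bA}$. Note that $a^{1/2}\in \overline{aA}$, since $(a^{1/2})(a^{1/2})^*=a\in {\rm Her}(a)$ and, for $z\in A$, one has $z\in\overline{aA}$ if and only if $zz^*\in {\rm Her}(a)$. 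Set $x=U(a^{1/2})\in \overline{bA}$. Because $U$ preserves the inner product, $x^*x=\la x,x\ra=\la a^{1/2},a^{1/2}\ra=a$; and since $x\in \overline{bA}$ one has $xx^*\in {\rm Her}(b)$. Thus $x$ witnesses $a\simle b$.

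For the converse, assume $a\simle b$, so there is $x\in A$ with $x^*x=a$ and $xx^*\in {\rm Her}(b)$. I would define $T\colon H_2\to H_1$ by $T(\xi)=x^*\xi$ and verify three things. First, $T$ maps into $H_1$: for $\xi\in \overline{bA}$ the element $d=\xi\xi^*$ lies in ${\rm Her}(b)$, so $0\le (x^*\xi)(x^*\xi)^*=x^*dx\le \|d\|\,x^*x=\|d\|\,a$, and a positive element dominated by a multiple of $a$ lies in ${\rm Her}(a)$ (compare the range projections); the characterization $z\in \overline{aA}\Leftrightarrow zz^*\in {\rm Her}(a)$ then gives $x^*\xi\in \overline{aA}=H_1$. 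Second, $T$ is a bounded module map with $\|T\|\le \|x\|$, which is immediate from $T(\xi c)=T(\xi)c$ and $\|x^*\xi\|\le \|x\|\,\|\xi\|$. Third, $T$ has dense range: we may assume $\|b\|\le 1$ and show $x^*b^{1/n}x\to a$ in norm. With $p_b$ the range projection of $b$, $h_n=p_b-b^{1/n}\ge 0$, and $e=(xx^*)^{1/2}\in {\rm Her}(b)$, one has $p_bx=x$, so $x^*b^{1/n}x-a=-x^*h_nx$ and $\|x^*h_nx\|=\|h_n^{1/2}e\|^2=\|eh_ne\|=\|e^2-eb^{1/n}e\|\to 0$ using $b^{1/n}e\to e$. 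Since $b^{1/n}x\in \overline{bA}=H_2$ and $T(b^{1/n}x)=x^*b^{1/n}x\to a$, we get $a\in \overline{T(H_2)}$; multiplying on the right by $A$ then yields $\overline{aA}\subset \overline{T(H_2)}$, so $T$ has dense range.

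The main obstacle is bookkeeping rather than any single hard estimate: one must track that the hypothesis in the first part is exactly \emph{$A$-weakly} dense range, which is precisely what Theorem \ref{PQ}(1) consumes, while the conclusion in the second part is genuine norm-density. The one real computation is the norm convergence $x^*b^{1/n}x\to a$; its delicate point is that $b^{1/n}\to p_b$ only in the strong operator topology, so the convergence must be forced through $e=(xx^*)^{1/2}\in {\rm Her}(b)$ via $b^{1/n}e\to e$ in norm rather than attempted directly on $x$. Everything else uses only Lemma \ref{Lpolord}, the standard submodule characterization $z\in\overline{aA}\Leftrightarrow zz^*\in{\rm Her}(a)$, and Theorem \ref{PQ}.
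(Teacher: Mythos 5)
Your proof is correct, and the second implication is essentially the paper's own argument: the paper also takes the witness $x$ with $x^*x=a,$ $xx^*\in {\rm Her}(b),$ defines $T$ by left multiplication by $x^*$ (checking $x^*\xi\in\overline{aA}$ via exactly the domination argument you use), and gets density from $x^*b^{1/n}x\to a$ — the paper simply cites Dini's theorem for that convergence, whereas you prove it by hand through $e=(xx^*)^{1/2}$ and $\|x^*(p_b-b^{1/n})x\|=\|e^2-eb^{1/n}e\|\to 0$; both are fine, and your route correctly isolates the delicate point that $b^{1/n}\to p_b$ only strongly, so the convergence must be funneled through an element of ${\rm Her}(b).$ Where you genuinely diverge is the first implication. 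You invoke Theorem \ref{PQ}(1) as a black box: the $A$-weakly dense range of $T\in B(H_2,H_1)$ gives a unitary $U$ from $\overline{aA}$ onto a submodule of $\overline{bA},$ and then $x=U(a^{1/2})$ immediately satisfies $x^*x=\la a^{1/2},a^{1/2}\ra=a$ and $xx^*\in{\rm Her}(b).$ The paper instead replays the mechanism of Theorem \ref{PQ} at the element level: it uses Proposition \ref{cdense} to show $c=(Tb)(Tb)^*$ is strictly positive in ${\rm Her}(a),$ takes the polar decomposition $(Tb)^*=uc^{1/2}$ in $A^{**},$ and sets $x=ua^{1/2}.$ Your version is shorter and more modular, and it makes transparent that the corollary is literally Theorem \ref{PQ}(1) specialized to singly generated modules and evaluated at $a^{1/2};$ the paper's version is self-contained, produces the partial isometry $u$ and the explicit form $xx^*=uau^*,$ and avoids needing the inner-product-preserving map from the theorem's conclusion. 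The auxiliary facts you rely on — that $z\in\overline{aA}$ iff $zz^*\in{\rm Her}(a),$ and that $0\le d\le \lambda a$ forces $d\in{\rm Her}(a)$ — are standard and used implicitly in the paper as well, so there is no gap.
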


\begin{proof}
Suppose that $a\simle b,$ i.e., there is 
$x\in A$ such that
\beq\label{pq-1-1}
x^*x=a\andeqn xx^*\in  {\rm Her}(b)=\overline{bAb}.
\eneq
Then, for any $c\in A,$ $(x^*bc)(x^*bc)^*=x^*bcc^*bx\in \overline{aAa}.$ It follows that $x^*bc\in \overline{aA}=H_2$
for all $c\in A.$ 
Define $T: H_2\to H_1$ by $T(bc)=x^*bc$ for all $c\in A.$  Then $T\in B(H_2, H_1).$
Since $x^*x=a,$ by Dini's theorem, $x^*b^{1/n}x$ converges to $a$ in norm. Since $T$ is a bounded module map,
we conclude that $T(H_2)$ is dense in $H_1.$

Now we assume that there is $T\in B(H_2, H_1)$ whose range  is $A$-weakly dense in $H_1.$
Note that $a$ is a strictly positive element of $K(H_1)$  and 
$b$ is a  strictly positive element of $K(H_2),$ respectively. 
By  Proposition \ref{cdense}, $b(H_2)$ is dense in $H_2.$ It follows that $Tb(H_2)$ is $A$-weakly dense in $H_1.$
By (2) of Proposition \ref{cdense}, 
$c=(Tb)(Tb)^*$ is a strictly positive element of   ${\rm Her}(a).$
Write $(Tb)^*=uc^{1/2}$ as polar decomposition in $A^{**}.$ 
Then $uy\in A$ for all $y\in \overline{aA}.$ Put $x=ua^{1/2}.$  
Then 
\beq
x^*x=a \andeqn xx^*\in K(H_2)={\rm Her}(b).
\eneq

\end{proof}

\section{Equivalence classes of Hilbert modules and stable rank one}

Now  let us discuss the possibility  to use quasitraces 
to measure Hilbert $A$-modules. 
We will consider the question stated in \ref{q5}. Moreover, we will 
discuss the case that  unitary equivalence classes of Hilbert $A$-submodules can be determined 
by the Cuntz semigroup. 

\begin{df}\label{DCH}
Let $A$ be a $\sigma$-unital \CA. Then one may identify $A\otimes {\cal K}$ with 
$K(H_A).$ 
Denote by ${\rm CH}(A)$ the unitary equivalence classes $[H]$ of 
countably generated Hilbert $A$-modules (where $H$ is a countably generated 
Hilbert $A$-module). 

One may  define $[H_1]+[H_2]$ to be the unitarily equivalent class $[H_1\oplus H_2].$ 
Then ${\rm CH}(A)$ becomes a semigroup.

Let $a,b \in (A\otimes {\cal K})_+.$ We write 
$a{\stackrel{\approx}{\sim}} b,$ if there exists $x\in A\otimes {\cal K}$ such that
$x^*x=a$ and $xx^*$ is a strictly positive element of ${\rm Her}(b).$ 
In the terminology of \cite{ORT}, $a\cuapprox b$ is the same 
as $a\sim b'\cong b$ in \cite{ORT}.

Next proposition states that ``$\cuapprox$"  is an equivalence relation.

Let $a\in (A\otimes {\cal K})_+.$ Define $H_a=\overline{a(H_A)}.$ 
Let $H$ be a countably generated Hilbert $A$-module. Then 
by \cite{MP} (see Proposition \ref{TKHsigma}, for convenience), $K(H)$ is $\sigma$-unital. 
We may view $H$ as a Hilbert $A$-submodule of $H_A=l^2(A).$
Let $b\in K(H)$ be a strictly positive element 
of $K(H).$  Then, by Proposition \ref{cdense}, $H_b:= \overline{bH_A}=\overline{bH}=H.$ If $c\in K(H)$ is
{{another}} strictly positive element, 
then $H_c=H_b.$ 


Let $p_a, p_b$ be the open projections corresponding to $a$  and $b $ (in $(A\otimes {\cal K})^{**}$),
respectively.  Define $p_a\approx^{cu} p_b$ if there is $v\in (A\otimes {\cal K})^{**}$ such that
$v^*v=p_a$ and $vv^*=p_b$ and, for any $c\in {\rm Her}(a)$ and $d\in {\rm Her}(b),$ $vc, v^*d\in A\otimes {\cal K}.$ See also \cite{Cu1} and \cite{Cu2}.

\end{df}

We would like to remind the reader of the following statement.

\begin{prop}[Proposition 4.3 and 4.2 of \cite{ORT}]\label{Prelation}
Let $A$ be a $\sigma$-unital \CA\, and $a, b\in (A\otimes {\cal K})_+.$
Then the following are equivalent.

(1) $a \cuapprox b;$

(2) $p_a\approx^{cu} p_b$ and 

(3)  $[H_a]=[H_b]$ in ${\rm CH}(A).$ 

Moreover, ``\,$\cuapprox$\," is an equivalence relation. 

\end{prop}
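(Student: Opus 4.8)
The plan is to prove the cycle of implications $(1)\Rightarrow(2)\Rightarrow(3)\Rightarrow(1)$ and then deduce that $\cuapprox$ is an equivalence relation from the characterization (3). Throughout I write $B=A\otimes{\cal K}$, which is $\sigma$-unital and stable, and I identify $B=K(H_A)$ as in Definition \ref{DCH}. I will freely use that for $a\in B_+$ the hereditary \SCA\ ${\rm Her}(a)=\overline{aBa}$ is exactly $K(H_a)$ (where $H_a=\overline{aH_A}$), that $a$ is a strictly positive element of $K(H_a)$, and that $p_a\in B^{**}$ is the common support (range) projection of $a$, of $a^{1/2}$, and of ${\rm Her}(a)$.

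For $(1)\Rightarrow(2)$, suppose $x\in B$ satisfies $x^*x=a$ and $xx^*$ is strictly positive in ${\rm Her}(b)$. Take the polar decomposition $x=v|x|$ in $B^{**}$, so $|x|=(x^*x)^{1/2}=a^{1/2}$. Then $v^*v$ is the support projection of $a^{1/2}$, namely $p_a$, while $vv^*$ is the range projection of $x$, equal to the support projection of $xx^*$; since $xx^*$ is strictly positive in ${\rm Her}(b)$, this support projection is $p_b$. For the multiplier conditions, note $v a^{1/2}=x\in B$, and since ${\rm Her}(a)=\overline{a^{1/2}Ba^{1/2}}$ and $\|v\|\le 1$, one gets $vc\in B$ for every $c\in{\rm Her}(a)$. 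Dually, $v^*(xx^*)^{1/2}=x^*\in B$ and ${\rm Her}(b)=\overline{(xx^*)^{1/2}B(xx^*)^{1/2}}$ give $v^*d\in B$ for all $d\in{\rm Her}(b)$. Hence $p_a\approx^{cu}p_b$.

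For $(2)\Rightarrow(3)$, given such a $v$ I define $U\colon H_a\to H_b$ by $U\xi=v\xi$. Writing $\xi\in H_a=\overline{a^{1/2}H_A}$ as a limit of $a^{1/2}\eta_n$ and using $va^{1/2}\in B$ together with $\|v\|\le1$ shows that $v\xi\in H_A$ is well defined; the identity $p_b(v\xi)=vv^*v\xi=vp_a\xi=v\xi$ places $v\xi$ in $H_b$. Then $U$ is an $A$-module map with $\la U\xi,U\eta\ra=\la\xi,v^*v\eta\ra=\la\xi,\eta\ra$, and the symmetric map $\zeta\mapsto v^*\zeta$ on $H_b$ is its inverse, so $U$ is a unitary isomorphism and $[H_a]=[H_b]$. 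For $(3)\Rightarrow(1)$, let $U\in B(H_a,H_b)$ implement the unitary equivalence; being inner-product preserving and invertible, $U\in L(H_a,H_b)$ with $U^*=U^{-1}$. Set $x:=Ua^{1/2}$. Since $a^{1/2}\in K(H_a)$ has range in $H_a$ and $U\in L(H_a,H_b)$, one checks that $x\in K(H_a,H_b)\subset K(H_A)=B$. Then $x^*x=a^{1/2}U^*Ua^{1/2}=a$ (as $U^*U={\rm id}_{H_a}$ acts as the identity on the range of $a^{1/2}$), while $xx^*=UaU^*$, which is the image of the strictly positive element $a\in K(H_a)$ under the induced $*$-isomorphism $K(H_a)\to K(H_b)$ and hence is strictly positive in ${\rm Her}(b)$. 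Thus $a\cuapprox b$.

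Finally, the cycle shows $a\cuapprox b$ if and only if $[H_a]=[H_b]$. Since $a\mapsto[H_a]$ is a well-defined assignment into ${\rm CH}(A)$ and equality of unitary equivalence classes is manifestly reflexive, symmetric, and transitive, the relation $\cuapprox$ inherits these properties; reflexivity can also be seen directly by taking $x=a^{1/2}$. I expect the main obstacle to be the direction $(3)\Rightarrow(1)$: one must turn an abstract module isomorphism $U$ into a genuine element $x\in A\otimes{\cal K}$, which requires verifying that $Ua^{1/2}$ is compact (lies in $K(H_a,H_b)$, using the description of $K(H_1,H_2)$ in Definition \ref{Hild}) and carefully tracking the support projections so that $x^*x=a$ and $xx^*$ is strictly positive in ${\rm Her}(b)$ rather than merely Cuntz-below $b$. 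The remaining care is bookkeeping with the identifications $K(H_a)={\rm Her}(a)$ and $K(H_b)={\rm Her}(b)$ inside $B=K(H_A)$.
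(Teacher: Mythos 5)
Your proof is correct and follows essentially the same route as the paper's own argument (which appears in the source, attributed to \cite{ORT}): polar decomposition of $x$ in $(A\otimes{\cal K})^{**}$ for (1)$\Rightarrow$(2), letting $v$ act on $H_a$ to produce the module unitary for (2)$\Rightarrow$(3), taking $x=Ua^{1/2}$ (the paper uses the mirror image $a^{1/2}u^*$) for (3)$\Rightarrow$(1), and deducing that $\cuapprox$ is an equivalence relation from the characterization (3). The only step to state a bit more carefully is your use of $p_b(v\xi)=v\xi$ to place $v\xi$ in $H_b$, since $p_b$ does not act on $H_A$ a priori; the paper instead verifies $\lim_n\|b^{1/n}v\xi-v\xi\|=0$, which is the rigorous form of the same observation.
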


\begin{NN}\label{NN0}

{\rm By subsection 4 of \cite{BC}, there are examples of stably finite and separable \CA s 
$A$ which contain 
positive elements $a\sim b$ in ${\rm Cu}(A)$ but 
$a\not\cuapprox b.$}
\end{NN}

\begin{NN}\label{NN1}
{\rm (1) Let $A$ be a $\sigma$-unital \CA\, of stable rank one. By Theorem 3 of \cite{CEI}, 
$a\sim b$ in ${\rm Cu}(A)$ if and only if $H_a$ and $H_b$ are unitarily equivalent. 
One may ask whether  the converse holds.  Theorem \ref{Tstr=1} provides a partial answer to the question. 
}

{\rm 
(2)  
Let $A$ be a $\sigma$-unital simple \CA\, and ${\rm Ped}(A)$ be its Pedersen ideal. 
Let $a\in {\rm Ped}(A)\cap A_+.$ Then ${\rm Her}(a)\otimes {\cal K}\cong A\otimes {\cal K},$ by 
Brown's stable isomorphism theorem (\cite{Brst}).   One notes that ${\rm Her}(a)$
is algebraically simple.}
{\rm To study the Cuntz semigroup 
of $A,$ or the semigroup ${\rm CH}(A),$ one may consider ${\rm Her}(a)$ instead of $A.$ 

(3) Let us now assume that  $A$ is algebraically simple. 
Let $\widetilde{QT}(A)$ be the set of all 2-quasi-traces defined on
 the Pedersen ideal of $A\otimes {\cal K}.$
Let $QT(A)$ be the set of 2-quasi-traces $\tau\in \widetilde{QT}(A)$ such that $\|\tau|_A\|=1.$
 Let us assume that $QT(A)\not=\emptyset.$  
It follows from Proposition 2.9 of \cite{FLosc} that $0\not\in \Qw.$  Recall that $\Qw$ is a compact set (see Proposition 2.9
of \cite{FLosc}).}

{\rm  Denote by $\Aff_+(\Qw)$ the set of all continuous affine functions $f$
on the compact convex set $\Qw$  such that $f=0,$ or $f(\tau)>0$ for all $\tau\in \Qw.$ 
Let $\LAff_+(\Qw)$ be  the set of those lowe-semi-continuous affine functions $f$
on $\Qw$  such that there are $f_n\in {\rm Aff}_+(\Qw)$ such that $f_n\nearrow f$ (point-wisely).
We allow $f$ has value $\infty.$} 

{\rm (4) In what follows, for any $0<\dt<1,$ denote by $f_\dt$ {{a function in}} $C([0, \infty))$ with $0\le f_\dt(t)\le 1$
for all $t\in [0,\infty),$ $f_\dt(t)=0$ if $t\in [0, \dt/2],$ $f_\dt(t)=1$ if $t\in [\dt, \infty)$ and $f_\dt(t)$ is linear 
on $(\dt/2, \dt).$}

{\rm Let $a, b\in (A\otimes {\cal K})_+.$ Define 
$$ d_\tau(a)=\lim_{n\to\infty}\tau(f_{1/n}(a))\rforal \tau\in {\widetilde{QT}}(A).$$
Note $f_{1/n}(a)\in {\rm Ped}(A)_+$ for each $n\in \N.$}

{\rm (5)  Let $A$ be an algebraically simple \CA\, with $QT(A)\not=\emptyset.$
 We say $A$ has strict comparison (of Blackdar, see \cite{B1}), if, for any $a, b\in (A\otimes {\cal K})_+,$  $d_\tau(a)<d_\tau(b)$ for all $\tau\in \Qw$ implies that 
$a\lesssim b$ in ${\rm Cu}(A).$ 

 (6)  $A$ is said to have finite radius of comparison, if there is $0<r<\infty$ such that, for any 
$a, b\in (A\otimes {\cal K})_+,$
$a\lesssim b,$ whenever 
$d_\tau(a)+r<d_\tau(b)$ for all $\tau\in \Qw$ (see \cite{Tomrc}). 

 If $A$ is $\sigma$-unital simple which is  not algebraically simple, we may pick a nonzero element
$e\in {\rm Ped}(A)_+$ and consider ${\rm Her}(a)\otimes {\cal K}\cong A\otimes {\cal K}.$ Then we say 
that $A$ has strict comparison (or has finite radius comparison), if ${\rm Her}(e)$ does.  It should be noted 
that this definition does not depend on the choice of $e.$}
Note that, in both cases above, we always assume that ${\widetilde{QT}}(A)\not=\{0\}.$
\end{NN}

\begin{NN}\label{q5}
{\rm In this section, we will consider the following question:
Suppose that  $d_\tau(b)$ is much larger than $d_\tau(a)$ 
for all $\tau\in \Qw.$ 
Does   it follow that $a\simle b?,$ or equivalently,
that $H_a$ is unitarily equivalent to a Hilbert $A$-submodule of $H_b?$
} 
\end{NN}

\begin{NN}

{\rm (7) Define a map $\Gamma: {\rm Cu}(A)\to \LAff_+(\Qw)$ by
$\Gamma([a])(\tau)=\widehat{[a]}(\tau)=d_\tau(a)$ for all $\tau\in \Qw.$ }

{\rm (8) It is proved by M. Rordam \cite{Ror04JS} that, for a unital finite simple \CA\, $A$,  if $A$ is ${\cal Z}$-stable then $A$ has stable rank one. Robert \cite{Rlz} showed that  any 
$\sigma$-unital simple stably projectionless ${\cal Z}$-stable \CA\, $A$ has 
almost stable rank one.   Recently, it is shown that, any $\sigma$-unital finite simple ${\cal Z}$-stable \CA\, 
has stable rank one (see \cite{FLL}).}

{\rm A part of the  Toms-Winter  conjecture states that, for a separable amenable simple \CA\, $A,$ $A$ 
is ${\cal Z}$-stable if and only if $A$ has strict comparison. 
It follows from \cite{Ror04JS}  and \cite{ERS} that if $A$ is ${\cal Z}$-stable, then $A$ has strict comparison and 
$\Gamma$ is surjective.   The remaining open question is whether a separable amenable 
simple \CA\, $A$ with strict comparison  is  always ${\cal Z}$-stable.
There are steady progresses to resolve the remaining problem (\cite{KR2}, \cite{S-2}, 
\cite{TWW-2}, \cite{Zw}, \cite{Th}, \cite{CETW}, \cite{LinGamma} and \cite{LinNZ}, for example)}.

{\rm Since strict comparison is a property for ${\rm Cu}(A),$  one may ask the question 
whether a separable amenable simple \CA\,$A$  whose ${\rm Cu}(A)$ behaves 
like a separable simple ${\cal Z}$-stable \CA\, is in fact ${\cal Z}$-stable. 
To be more precise, let us assume that $A$ is a separable simple \CA\, such that
${\rm Cu}(A)=V(A)\sqcup (\LAff_+(\widetilde{QT}(A))\setminus \{0\})={\rm CH}(A),$
where $V(A)$ is the {{sub-semigroup}}  of ${\rm Cu}(A)$ {{whose elements are}} represented by projections,
i.e., $A$ has strict comparison, $\Gamma$ is surjective and ${\rm Cu}(A)={\rm CH}(A).$
The question is whether  such $A$ is ${\cal Z}$-stable, if we also assume $A$ is amenable.
Theorem \ref{Tstr=1} shows that such \CA s always have stable rank one, by 
showing that these \CA s  have tracial approximate oscillation zero (see \ref{Domega}).  It should be also mentioned 
that 
if $A$ is a separable unital simple \CA\, with stable rank one,
then $\Gamma$ is surjective (see \cite{Th} and \cite{APRT}). }

\end{NN}

To state the next lemma, let us recall the definition of tracial approximate oscillation zero.

\begin{df}\label{Domega}
Let $A$ be a \CA\, with ${\widetilde{QT}}(A)\setminus \{0\}\not=\emptyset.$ 
Let $S\subset {\widetilde{QT}}(A)$
be a compact subset. 
Define, for each $a\in (A\otimes {\cal K})_+,$ 
\beq
\omega(a)|_S&=&\lim_{n\to\infty}\sup\{d_\tau(a)-\tau(f_{1/n}(a)): \tau\in S\}
\eneq
(see A1 of \cite{eglnkk0} and Definition 4.1 of \cite{FLosc}).  We will assume that $A$ is algebraically simple  and 
only consider the case that $S=\Qw,$ and in this case, we will write 
$\omega(a)$ instead of $\omega(a)|_{\Qw},$ in this paper.  It should be mentioned 
that $\omega(a)=0$ if and only if $d_\tau(a)$ is continuous (and finite) on $\Qw.$

In this case, (when $\|a\|_{_{2, \Qw}}<\infty,$ for example,
$a\in {\rm Ped}(A\otimes {\cal K})$), 
we write $\Omega^T(a)=\Omega^T(a)|_S=0,$ 
if there is a sequence $b_n\in {\rm Ped}(A\otimes {\cal K})\cap {\rm Her}(a)_+$ such that $\|b_n\|\le \|a\|,$
\beq
\lim_{n\to\infty}\omega(b_n)=0\andeqn \lim_{n\to\infty}\|a-b_n\|_{_{2, \Qw}}=0,
\eneq
where $\|x\|_{_{2, \Qw}}=\sup\{\tau(x^*x)^{1/2}: \tau\in \Qw\}$ (see Proposition 4.8 of \cite{FLosc}).
If $\omega(a)=0,$ then $\Omega^T(a)=0$ (see the paragraph after Definition 4.7 of \cite{FLosc}).
 
Even if $A$ is not algebraically simple (but $\sigma$-unital and simple), 
we may fix a  nonzero element $e\in {\rm Ped}(A)\cap  A_+$ (so that ${\rm Her}(e)\otimes {\cal K}\cong 
A\otimes {\cal K}$) and choose $S=\overline{QT({\rm Her}(e))}^w.$ 
Then, for any 
$a\in (A\otimes {\cal K})_+$ with $\|a\|_{_{2, S}}<\infty,$ 
we write $\Omega^T(a)=0$ if $\Omega^T(a)|_S=0$ 
(this does not depend on the choice of $e,$ see Proposition 4.9 of \cite{FLosc}).

A $\sigma$-unital simple \CA\, $A$ is said to have tracial approximate oscillation zero, if
$\Omega^T(a)=0$ for 
every positive element $a\in {\rm Ped}(A\otimes {\cal K}).$  
\end{df}

\begin{lem}\label{Lcont}
Let $A$ be a $\sigma$-untal algebraically  simple \CA\,  with $QT(A)\not=\emptyset,$ 
strict comparison and surjective $\Gamma.$ 
For any $a\in (A\otimes {\cal K})_+,$ there is $b\in (A\otimes {\cal K})_+,$ such 
that $\Gamma([a])=\Gamma([b])$ in ${\rm Cu}(A)$ such that
$\Omega^T(b)=0.$
\end{lem}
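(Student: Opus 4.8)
The plan is to realize the prescribed rank $f:=\Gamma([a])$, i.e. the function $\tau\mapsto d_\tau(a)$, by an orthogonal sum of elements whose individual ranks are \emph{continuous} affine functions, and then to use the partial sums of this orthogonal sum as the witnessing sequence for $\Omega^T(b)=0$. If $a=0$ we take $b=0$, so assume $a\neq 0$. Since $A$ is simple and $0\notin\Qw$, every $\tau\in\Qw$ is a nonzero quasitrace, so $f(\tau)=d_\tau(a)>0$ for all $\tau\in\Qw$; moreover $f\in\LAff_+(\Qw)$ by the definition of the codomain of $\Gamma$, possibly with value $\infty$.

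First I would decompose $f$ as a pointwise sum $f=\sum_{n=1}^\infty h_n$ with each $h_n\in\Aff_+(\Qw)$ \emph{strictly positive}. Choose $g_n\in\Aff_+(\Qw)$ with $g_n\nearrow f$; since $f>0$ everywhere and the sequence is increasing, after discarding finitely many initial terms each $g_n$ is strictly positive. The main obstacle appears here: the naive increments $g_n-g_{n-1}$ are continuous and affine and $\ge 0$, but they may vanish at some extreme points of $\Qw$, and a continuous affine function that is zero somewhere and positive elsewhere is \emph{not} in $\LAff_+(\Qw)$ (any $p\in\Aff_+(\Qw)$ lying below it must vanish identically), hence is not realizable by $\Gamma$. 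To get around this I would pass to $G_n:=(1-2^{-n})g_n$; then $G_n\nearrow f$ and $G_{n+1}-G_n\ge 2^{-n-1}g_n>0$ on all of $\Qw$, so the increments $h_1:=G_1$ and $h_n:=G_n-G_{n-1}$ $(n\ge 2)$ are strictly positive members of $\Aff_+(\Qw)$ with $\sum_{n=1}^m h_n=G_m\nearrow f$. Since $\Gamma$ is surjective and each $h_n$ is bounded, I would pick $c_n\in{\rm Ped}(A\otimes{\cal K})_+$ with $\|c_n\|\le 1$ and $d_\tau(c_n)=h_n(\tau)$ (realized inside a matrix corner so that $c_n$ is a Pedersen-ideal element); continuity of $h_n$ forces $d_\tau(c_n)$ to be continuous, hence $\omega(c_n)=0$.

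Next I would assemble these layers orthogonally. Using $A\otimes{\cal K}\cong M_\infty(A\otimes{\cal K})$, place the $c_n$ in mutually orthogonal corners and set $b:=\sum_{n=1}^\infty\lambda_n c_n$, where $\lambda_n>0$ satisfy $\lambda_n\to 0$ and $\sum_n\lambda_n^2 M_n<\infty$ with $M_n:=\max_{\tau\in\Qw}h_n(\tau)$. Orthogonality together with $\lambda_n\to 0$ gives norm convergence, so $b\in(A\otimes{\cal K})_+$; scaling by $\lambda_n>0$ does not change ranks and ranks add over orthogonal supports, whence $d_\tau(b)=\sum_n d_\tau(c_n)=\sum_n h_n(\tau)=f(\tau)=d_\tau(a)$, i.e. $\Gamma([b])=\Gamma([a])$. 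The freedom in the $\lambda_n$ is exactly what keeps the $2$-norm finite even when $f$ takes the value $\infty$: since $c_n^2\le c_n$ and $\tau(c_n)\le\|c_n\|\,d_\tau(c_n)\le h_n(\tau)\le M_n$, we get $\tau(b^2)=\sum_n\lambda_n^2\tau(c_n^2)\le\sum_n\lambda_n^2 M_n<\infty$ uniformly in $\tau$, so $\|b\|_{_{2,\Qw}}<\infty$.

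Finally I would verify $\Omega^T(b)=0$ through the partial sums $b_m:=\sum_{n=1}^m\lambda_n c_n$. Each $b_m$ lies in ${\rm Ped}(A\otimes{\cal K})\cap{\rm Her}(b)_+$ (a finite sum of Pedersen-ideal elements sitting in orthogonal corners of ${\rm Her}(b)$) and satisfies $\|b_m\|=\max_{n\le m}\lambda_n\le\|b\|$; its rank $d_\tau(b_m)=\sum_{n=1}^m h_n(\tau)=G_m(\tau)$ is continuous and finite, so $\omega(b_m)=0$. Moreover $b-b_m=\sum_{n>m}\lambda_n c_n$ is again an orthogonal sum, so $\|b-b_m\|_{_{2,\Qw}}^2=\sup_{\tau\in\Qw}\sum_{n>m}\lambda_n^2\tau(c_n^2)\le\sum_{n>m}\lambda_n^2 M_n\to 0$ as $m\to\infty$. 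By the definition of tracial approximate oscillation zero this exhibits $\Omega^T(b)=0$, completing the argument. The genuinely delicate point is the strictly-positive decomposition of $f$ over $\Aff_+(\Qw)$ together with the choice of scalars $\lambda_n$; the rest is bookkeeping about orthogonal sums, additivity of ranks, and the $2$-norm.
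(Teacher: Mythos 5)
Your first half is correct and is essentially the paper's own argument: your damping $G_n=(1-2^{-n})g_n$ is the same device as the paper's inductively chosen $f_n=(1-\af_n)g_n$ (your fixed constants even work slightly more cleanly, since $G_{n+1}-G_n\ge 2^{-(n+1)}g_n>0$), and it correctly addresses the real obstacle you identified, namely that the naive increments $g_n-g_{n-1}$ need not be strictly positive and hence need not lie in $\Aff_+(\Qw)$. The orthogonal realization of the increments and the weighted sum $b=\sum_n\lambda_n c_n$ also match the paper's $b=\sum_n a_n/n$, and your explicit arrangement of $\sum_n\lambda_n^2M_n<\infty$ to guarantee $\|b\|_{_{2,\Qw}}<\infty$ is a genuinely careful touch, since Definition \ref{Domega} only defines $\Omega^T(b)=0$ when the $2$-norm is finite and the paper leaves this point to the machinery of \cite{FLosc}.

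The gap is the parenthetical ``realized inside a matrix corner so that $c_n$ is a Pedersen-ideal element.'' Surjectivity of $\Gamma$ only produces some $c_n\in (A\otimes{\cal K})_+$ with $d_\tau(c_n)=h_n$; strict comparison (together with $d_\tau(e)\ge 1$ on $\Qw$ for $e\in A$ strictly positive) does give $c_n\lesssim e^{\oplus k}$ for $k>\sup h_n$, but Cuntz subequivalence only lets you move the cut-downs $(c_n-\ep)_+$ into ${\rm Her}(e^{\oplus k})=M_k(A)$ (Lemma 2.2 of \cite{R1}); producing a \emph{single} element $c_n'\in M_k(A)_+\subset{\rm Ped}(A\otimes{\cal K})$ with $[c_n']=[c_n]$, or even just $d_\tau(c_n')=h_n$, is a realizability-in-the-corner statement that is not available in this generality --- such exact rank realization inside the algebra rather than its stabilization is known under stable rank one (see \cite{Th}, \cite{APRT}), and stable rank one is precisely the \emph{conclusion} that Theorem \ref{Tstr=1} extracts from this lemma, so it cannot be presupposed. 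Since the witnesses in Definition \ref{Domega} must lie in ${\rm Ped}(A\otimes{\cal K})\cap{\rm Her}(b)_+$, your partial sums $b_m$ are not legitimate witnesses without that claim, and the whole verification of $\Omega^T(b)=0$ rests on it. The paper avoids the issue entirely: it never places the layers in the Pedersen ideal, but uses the continuity of each $d_\tau(a_n)$ to extract (Lemma 4.5 of \cite{FLosc}) an approximate identity $e_{n,m}\in{\rm Her}(a_n)$ with $d_\tau(a_n)-\tau(e_{n,m})<2^{-(n+m)}$ and $\omega(e_{n,m})<2^{-(n+m)}$, assembles $e_n=\sum_{j=1}^n e_{j,n}$ into an approximate identity of ${\rm Her}(b)$ with $\omega(e_n)\to 0$, and concludes $\Omega^T(b)=0$ from Proposition 5.7 of \cite{FLosc}. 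Your proof is repaired by keeping your $b$ (with arbitrary $c_n$ from surjectivity of $\Gamma$) and replacing the partial-sum witnesses by this approximate-identity route, which requires no Pedersen-ideal membership of the layers.
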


\begin{proof}
Recall that 
we assume that  $QT(A)\not=\emptyset.$ 
Let $g\in \LAff_+(\overline{QT(A)}^w)$ be such that $\widehat{[a]}=g.$
There is a sequence of increasing $g_n\in \Aff_+(QTA)$ such that
$g_n\nearrow g$ (pointwisely).

Put $f_1=g_1-(1/4)g_1\in {\rm Aff}_+(\Qw)$ and $\af_1=1/4.$  Then $g_2(\tau)>f_1(\tau)$
for all $\tau\in \Qw.$ Choose $0<\af_2<1/2^{2+1}$ such that
$(1-\af_2)g_2(\tau)>f_1(\tau)$ for all $\tau\in \Qw.$ Put $f_2=(1-\af_2)g_2.$ 
Suppose that $0<\af_i<1/2^{2+i}$ is chosen, $i=1,2,...,n,$ such 
that $(1-\af_{i+1})g_{i+1}(\tau)> (1-\af_i)g_i(\tau)$ for all $\tau\in \Qw,$ $i=1,2,...,n-1.$
Then $g_{n+1}(\tau)>(1-\af_n)g_n(\tau)$ for all $\tau\in \Qw.$ Choose 
$0<\af_{n+1}<1/2^{2+n}$ such that $(1-\af_{n+1})g_{n+1}(\tau)>(1-\af_n)g_n(\tau)$ 
for all $\tau\in \Qw.$ Define $f_{n+1}=(1-\af_{n+1})g_{n+1}\in {\rm Aff}_+(\Qw).$
By induction, we obtain a strictly increasing sequence $f_n\in {\rm Aff}_+(\Qw)$
such that $f_n\nearrow g.$

Define $h_1=f_1$ and $h_n=f_n-f_{n-1}$ for $n\ge 2.$ 
Then $h_n\in {\rm Aff}_+(\Qw)$ and $g=\sum_{n=1}^\infty h_n$
(converges point-wisely).
Since $\Gamma$ is surjective, there are $a_n\in (A\otimes {\cal K})_+$
such that $\widehat{[a_n]}(\tau)=d_\tau(a_n)=h_n,$ $n\in \N.$
We may assume that $a_ia_j=0$ if $i\not=j.$ 
Define $b\in (A\otimes {\cal K})_+$ by $b=\sum_{n=1}^\infty a_n/n.$

Then $\widehat{[b]}=g.$  In other words, $\Gamma([a])=\Gamma([b]).$

For each $m\in \N,$ since $\widehat{[a_n]}$ is  continuous on $\Qw,$ 
(by Lemma 4.5 of \cite{FLosc}), choose $e_{n,m}\in {\rm Her}(a_n)$ with 
$0\le e_{n,m}\le 1$ such that  $\{e_{n,m}\}_{m\in \N}$ forms an approximate 
identity (for ${\rm Her}(a_n)$),
\beq
d_\tau(a_n)-\tau(e_{n,m})<1/2^{n+m}\rforal \tau\in \Qw,
\andeqn \omega(e_{n,m})<1/2^{n+m}.
\eneq

Define $e_n=\sum_{j=1}^n e_{j, n}.$ Since $a_ia_j=0,$ if $i\not=j,$  {{we have  that}}
$0\le e_n\le 1,$ and $\{e_n\}$ forms an approximate identity for ${\rm Her}(b).$
By (2) of Proposition 4.4 of  \cite{FLosc}, we compute that
\beq
\omega(e_n)\le \sum_{j=1}^n \omega(e_{j, n})<\sum_{j=1}^n 1/2^{j+n}=1/2^n.
\eneq
Hence $\lim_{n\to\infty}\omega(e_n)=0$ and, by Proposition 5.7 of \cite{FLosc}, 
$\Omega^T(b)=0.$
\end{proof}

\begin{thm}
\label{Tstr=1}
Let $A$ be a $\sigma$-unital simple \CA\, with 
strict comparison and surjective $\Gamma$ (which is not purely infinite). 
Then the following are equivalent:

(1) for any $a, b\in (A\otimes {\cal K})_+,$ $[a]=[b]$ in ${\rm Cu}(A)$ if  and only if
$a \cuapprox b;$

(2) $A$ has stable rank one;

(3) ${\rm Cu}(A)={\rm CH}(A);$

(4) for any $a, b\in (A\otimes {\cal K})_+,$ $[a]=[b]$ in ${\rm Cu}(A)$ if  and only 
if Hilbert $A$-module $H_a$ and $H_b$ are unitarily equivalent.

\end{thm}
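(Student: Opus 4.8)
The plan is to establish the three ``formal'' equivalences $(1)\Leftrightarrow(3)\Leftrightarrow(4)$ directly from Proposition \ref{Prelation} and the definitions, to get $(2)\Rightarrow(4)$ from \cite{CEI}, and then to concentrate all the real work on the implication $(1)\Rightarrow(2)$. For the formal part: by Proposition \ref{Prelation} one has $a\cuapprox b$ if and only if $[H_a]=[H_b]$ in ${\rm CH}(A)$, and the latter is by definition unitary equivalence of $H_a$ and $H_b$; hence conditions $(1)$ and $(4)$ assert literally the same thing, giving $(1)\Leftrightarrow(4)$. For $(1)\Leftrightarrow(3)$ I would use that $a\cuapprox b$ always implies $a\sim b$ (i.e. $[a]=[b]$ in ${\rm Cu}(A)$), so $\cuapprox$ refines $\sim$ and there is a canonical surjective semigroup map $q\colon {\rm CH}(A)\to {\rm Cu}(A)$, $[H_a]\mapsto [a]$ (every countably generated module is some $H_b$ by \ref{DCH}). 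This $q$ is a bijection precisely when $a\sim b\Rightarrow a\cuapprox b$, which is exactly $(1)$; thus ${\rm Cu}(A)={\rm CH}(A)$ under the identification $q$ holds iff $(1)$ holds.

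For $(2)\Rightarrow(4)$ I would simply invoke Theorem 3 of \cite{CEI} (recalled in \ref{NN1}): when $A$ has stable rank one, $[a]=[b]$ in ${\rm Cu}(A)$ iff $H_a$ and $H_b$ are unitarily equivalent, which is $(4)$. Combined with the first paragraph, this already yields $(2)\Rightarrow(4)\Rightarrow(1)\Leftrightarrow(3)$, so the whole theorem reduces to proving $(1)\Rightarrow(2)$.

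The heart of the matter is $(1)\Rightarrow(2)$, which I would reduce to showing that $A$ has tracial approximate oscillation zero and then apply the theorem (for $\sigma$-unital simple $A$ with strict comparison and ${\widetilde{QT}}(A)\ne\{0\}$, guaranteed by ``not purely infinite'') that tracial approximate oscillation zero is equivalent to stable rank one \cite{FLosc,FLL}. To obtain oscillation zero I would argue by contradiction: suppose some $a\in {\rm Ped}(A\otimes{\cal K})_+$ has $\Omega^T(a)\ne 0$. By Lemma \ref{Lcont} choose $b\in (A\otimes {\cal K})_+$ with $\Gamma([a])=\Gamma([b])$ and $\Omega^T(b)=0$. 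Since $\Omega^T(a)\ne0$ forces $\omega(a)\ne0$, the rank function $d_\tau(a)$ is discontinuous on $\Qw$, so neither $a$ nor $b$ is Cuntz equivalent to a projection; both are purely non-compact. The key step is to upgrade the equality $d_\tau(a)=d_\tau(b)$ on $\Qw$ to $a\sim b$ in ${\rm Cu}(A)$: the inequality $b\lesssim a$ follows from the comparison-from-below property of oscillation-zero elements (a lemma of \cite{FLosc}) applied with $d_\tau(b)\le d_\tau(a)$, while $a\lesssim b$ follows by applying strict comparison to $(a-\ep)_+$ along a sequence $\ep\to 0$, using simplicity and pure non-compactness of $a$ to secure the strict inequality $d_\tau((a-\ep)_+)<d_\tau(a)=d_\tau(b)$ on $\Qw$ for cofinally many $\ep$, whence $a=\sup_\ep (a-\ep)_+\lesssim b$. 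Once $a\sim b$ is established, $(1)$ gives $a\cuapprox b$, which produces an isomorphism ${\rm Her}(a)\cong {\rm Her}(b)$; since $\Omega^T$ is an invariant of this isomorphism, $\Omega^T(a)=\Omega^T(b)=0$, contradicting $\Omega^T(a)\ne0$.

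The main obstacle I anticipate is precisely this upgrade from equal rank functions to Cuntz equivalence, and within it the direction $a\lesssim b$: the pointwise strict inequality $d_\tau((a-\ep)_+)<d_\tau(a)$ uniformly over the compact set $\Qw$ (equivalently, controlling spectral gaps of $a$ near $0$) is where strict comparison by itself is delicate, and where the hypotheses $\Omega^T(b)=0$, simplicity, and pure non-compactness of $a$ must all be used. I would also need to verify the invariance of $\Omega^T$ under $\cuapprox$, which should follow since $\Omega^T$ is defined through the hereditary subalgebra together with the quasitracial data preserved by the isomorphism implementing $a\cuapprox b$. Note that it is essential here that only $\Omega^T(b)=0$ (oscillation zero of the single constructed element $b$) is used, not oscillation zero of all elements, so there is no circularity with the characterization of stable rank one being invoked from \cite{FLosc,FLL}.
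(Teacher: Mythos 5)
Your proposal is correct and matches the paper's proof in all essentials: the same formal equivalences ((1)$\Leftrightarrow$(4) via Proposition \ref{Prelation}, (3)$\Leftrightarrow$(4) by definition, (2)$\Rightarrow$(3) via Theorem 3 of \cite{CEI}), and the same core argument for the remaining implication, namely reducing to tracial approximate oscillation zero by producing $b$ with $\Gamma([a])=\Gamma([b])$ and $\Omega^T(b)=0$ via Lemma \ref{Lcont}, upgrading to $[a]=[b]$ by strict comparison, transferring $\Omega^T(b)=0$ to $\Omega^T(a)=0$ through the partial isometry supplied by Proposition \ref{Prelation}, and concluding with Theorem 9.4 of \cite{FLosc}. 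The only cosmetic differences are that the paper proves (3)$\Rightarrow$(2) directly by cases ($\Gamma([a])$ continuous versus discontinuous) rather than your contrapositive framing, and it compresses your two-directional upgrade from equal rank functions to Cuntz equivalence --- where you invoke a comparison-from-below lemma for one direction and the $(a-\ep)_+$ argument for the other --- into a single appeal to strict comparison for classes not represented by projections.
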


\begin{proof}

(2) $\Rightarrow$ (3) follows from Theorem 3 of \cite{CEI}.

(3) $\Leftrightarrow$ (4)  follows from the definition.

(1) $\Leftrightarrow$ (4) follows from Proposition \ref{Prelation}. 

(3) $\Rightarrow$ (2):   We will show that $A$ has tracial approximate oscillation zero (see Definition  5.1
of \cite{FLosc}).

Let $a\in ({\rm Ped}(A\otimes {\cal K}))$  with $0\le a\le 1.$
Suppose that $\Gamma([a])$ 
is continuous. Then $\omega([a])=0.$  Hence 
$\Omega^T(a)=0.$
Now suppose that $\Gamma([a])$ is not continuous.
In particular, $[a]$ cannot be represented by a projection. 
 By Lemma \ref{Lcont}, 
there exists $b\in (A\otimes {\cal K})_+$ such that
$\Omega^T(b)=0$ 
and $\Gamma([a])=\Gamma([b]).$  
Since we now assume that $\Gamma([a])$ is not continuous neither is $\Gamma([b]).$
Hence both $[a]$ and $[b]$ are not represented by projections. 
As we assume that $A$ has strict comparison, this implies that $[a]=[b].$ 

Since (3) holds, 
$\overline{a(H_A)}$ is unitarily equivalent to $\overline{b(H_A)}.$
We have shown (3) $\Leftrightarrow$ (4) and (1) $\Leftrightarrow$ (4). 
By Proposition \ref{Prelation}, there is a partial isometry $v\in (A\otimes {\cal K})^{**}$
such that $v^*cv\in \overline{a(A\otimes {\cal K})a}$ for all $c\in \overline{b(A\otimes {\cal K})b}$ 
and $v^*bv$ is a strict positive element of $\overline{a(A\otimes {\cal K})a}.$ It follows 
that $\Omega^T(a)=0.$ Since this holds for every $a\in {\rm Ped}(A\otimes {\cal K}),$ 
$A$ has tracial approximate oscillation zero. By Theorem 9.4 of \cite{FLosc}, $A$ has stable rank one,
i.e., (2) holds.
%
\end{proof}



Let us end this section with the following partial answer to the question in \ref{q5}. 
Note that $d_\tau(b)$ is as large as one can  possibly have.

\begin{thm}
Let $A$ be a $\sigma$-unital simple \CA\, with finite radius of comparison. 
Suppose that $a, b\in (A\otimes {\cal K})_+$ such that $d_\tau(b)=\infty$ for all $\tau\in {\widetilde{QT}}(A)\setminus \{0\}.$ 
Then 

(1) $a\simle b$ and $H_a$ is unitarily equivalent to an orthogonal summand 
of $H_b,$

(2) $H_b\cong H_A$ as Hilbert $A$-module, and 

(3) ${\rm Her}(b)\cong A\otimes {\cal K}.$
\end{thm}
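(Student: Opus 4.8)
The plan is to prove the three assertions in the order $(3)\Rightarrow(2)\Rightarrow(1)$, with all of the real work concentrated in showing that ${\rm Her}(b)$ is stable. First I would reduce to the case that $A$ is algebraically simple: by \ref{NN1} one may replace $A$ by ${\rm Her}(e)$ for a fixed nonzero $e\in {\rm Ped}(A)_+,$ since ${\rm Her}(e)\otimes {\cal K}\cong A\otimes {\cal K}$ and all three statements are really assertions about $A\otimes {\cal K}$ and about the relation $b\cuapprox E,$ where $E\in(A\otimes{\cal K})_+$ is a fixed strictly positive element. The point of algebraic simplicity is that a strictly positive element of $A$ then lies in the Pedersen ideal, so the natural finite ``matrix-size'' truncations $E_N$ of $E$ satisfy $d_\tau(E_N)<\infty$ for every $\tau.$

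The first concrete step is to show $[b]=[E]$ in ${\rm Cu}(A).$ One inclusion is immediate: $b\in {\rm Her}(E)=A\otimes {\cal K},$ so $b\lesssim E.$ For the reverse, choose truncations $E_N\nearrow E$ (for instance $N$ mutually orthogonal copies of a strictly positive element of $A$) with $d_\tau(E_N)<\infty$ for all $\tau\in\Qw.$ Since $d_\tau(b)=\infty,$ the inequality $d_\tau(E_N)+r<d_\tau(b)$ holds for every $\tau,$ so finite radius of comparison gives $E_N\lesssim b;$ passing to the supremum over $N$ yields $E\lesssim b,$ hence $[b]=[E].$

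The main obstacle is upgrading this Cuntz equality to the spatial equivalence $b\cuapprox E$; by \ref{NN0} this is not automatic, and it amounts precisely to showing that the hereditary subalgebra ${\rm Her}(b)$ is stable (fullness being automatic, since $A\otimes{\cal K}$ is simple). I would establish stability via the Hjelmborg--R{\o}rdam criterion, that is, by constructing an increasing approximate unit $(p_n)$ of ${\rm Her}(b)$ with $p_{n+1}p_n=p_n$ and $p_n\lesssim p_{n+1}-p_n.$ The construction is inductive and uses $d_\tau(b)=\infty$ as an inexhaustible reservoir of orthogonal room: having built the finite-$d_\tau$ element $p_n,$ I cut $b$ away from a local unit $h$ of $p_n$ to obtain $b^{(n)}=(1-h)^{1/2}b(1-h)^{1/2}\in {\rm Her}(b),$ which is (approximately) orthogonal to $p_n$ and still has $d_\tau(b^{(n)})=\infty;$ finite radius of comparison then lets me place inside ${\rm Her}(b^{(n)})$ a compactly supported increment $q_{n+1}$ of finite $d_\tau$ with $q_{n+1}\gtrsim p_n$ and large enough to norm-dominate the next element of a fixed countable generating set of ${\rm Her}(b),$ the compact support making the resulting Cuntz subequivalences spatial. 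Setting $p_{n+1}=p_n+q_{n+1}$ and verifying that the $p_n$ exhaust ${\rm Her}(b)$ completes the telescoping approximate unit. The delicate points, where I expect the care to be needed, are arranging simultaneously the exhaustion (approximate-unit) property and the domination $p_n\lesssim p_{n+1}-p_n,$ and making the orthogonality and local-unit estimates precise.

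Finally I would assemble the conclusions. Stability and fullness of ${\rm Her}(b)$ give, by Brown's stable isomorphism theorem (\cite{Brst}), ${\rm Her}(b)\cong {\rm Her}(b)\otimes {\cal K}\cong (A\otimes {\cal K})\otimes {\cal K}\cong A\otimes {\cal K},$ which is $(3);$ equivalently $b\cuapprox E,$ whence by Proposition \ref{Prelation} $[H_b]=[H_E]=[H_A],$ i.e.\ $H_b\cong H_A,$ which is $(2).$ For $(1),$ Kasparov's stabilization theorem (\cite{K}) gives $H_a\oplus H_A\cong H_A$ for the countably generated module $H_a,$ so $H_a\oplus H_b\cong H_a\oplus H_A\cong H_A\cong H_b;$ thus $H_b$ has an orthogonal summand unitarily equivalent to $H_a.$ In particular $H_a$ is unitarily equivalent to a submodule of $H_b,$ so Theorem \ref{PQ} part (1) supplies a bounded module map $H_b\to H_a$ with $A$-weakly dense range, and Corollary \ref{pq} then yields $a\simle b.$
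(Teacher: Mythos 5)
Your skeleton is the paper's: stability of ${\rm Her}(b)$ via the Hjelmborg--R\o rdam criterion, with finite radius of comparison plus $d_\tau(b)=\infty$ supplying orthogonal room, then Brown's stable isomorphism theorem for (3) and Kasparov's absorption for (1); your cut-down $b^{(n)}=(1-h)^{1/2}b(1-h)^{1/2}$ together with the subadditivity argument giving $d_\tau(b^{(n)})=\infty$ is exactly the paper's key move (note the orthogonality is exact, not approximate: $hp_n=p_n$ forces $b^{(n)}p_n=0$). But there is a genuine gap at the joint (3) $\Rightarrow$ (2). You assert that ${\rm Her}(b)\cong A\otimes{\cal K}$ is ``equivalently $b\cuapprox E$.'' It is not: $\cuapprox$ is a spatial relation (it demands $x\in A\otimes {\cal K}$ with $x^*x=b$ and $xx^*$ strictly positive in ${\rm Her}(E)$), and an abstract isomorphism of hereditary subalgebras does not produce such an $x$ --- this is precisely the distinction you yourself invoked from \ref{NN0} at the Cuntz-semigroup level, reappearing one level up. Proposition \ref{Prelation} identifies $b\cuapprox E$ with $H_b\cong H_E,$ but neither follows from an abstract isomorphism ${\rm Her}(b)\cong {\rm Her}(E).$ The paper closes this with a separate argument: stability of $B={\rm Her}(b)$ yields mutually orthogonal $b_{0,n}\in B_+$ with $b_{0,1}\cuapprox b_{0,n}$ and $b_0=\sum_{n}b_{0,n}/n$ strictly positive, whence $H_b=H_{b_0}\cong l^2(H_{b_0}),$ and Proposition 7.4 of \cite{Lan} gives $l^2(H_{b_0})\cong l^2(A)=H_A$ (equivalently: stability gives isometries $s_n\in M(B)=L(H_b)$ with $\sum_n s_ns_n^*=1$ strictly, so $H_b\cong l^2(H_b)$). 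Some such step is needed before your (2); once $H_b\cong H_A$ is in hand, your assembly of (1) via Kasparov, Theorem \ref{PQ} and Corollary \ref{pq} is a sound, indeed more explicit, rendering of the paper's one-line conclusion.

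A second, lesser point: your choice of stability criterion makes the core step needlessly delicate, and the telescoping is inconsistent as written --- if $q_{n+1}\perp p_n$ and $p_{n+1}=p_n+q_{n+1},$ then $p_{n+1}p_n=p_n^2\neq p_n$ unless the $p_n$ are (close to) projections, and since each increment is chosen in a corner compressed away from the part of ${\rm Her}(b)$ already covered, the exhaustion needed for an approximate unit is exactly the unresolved bookkeeping you flag. The paper instead uses the pointwise criterion of Theorem 2.1 of \cite{HRm}: for each $c\in B_+$ admitting a local unit $e\in B_+,$ produce $y$ with $y^*y=c$ and $yy^*\perp c.$ This needs no induction: with $b_1=(1-f_{1/8}(e))^{1/2}b(1-f_{1/8}(e))^{1/2}$ one has $b_1c=cb_1=0,$ $d_\tau(f_{1/8}(e))\le \tau(f_{1/16}(e))<\infty$ while $d_\tau(b_1)=\infty,$ so finite radius of comparison gives $f_{1/8}(e)\lesssim b_1;$ Lemma 2.2 of \cite{R1} spatializes this to $x=v|x|$ with $x^*x=f_{1/16}(f_{1/8}(e))$ and $xx^*\in {\rm Her}(b_1),$ and $y=vc^{1/2}$ works because $f_{1/4}(e)c=c.$ Finally, your preliminary computation $[b]=[E]$ in ${\rm Cu}(A)$ is correct but never used afterwards; you should either delete it or route the argument through it, not leave it as motivation only.
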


\begin{proof}
Put $B=\overline{b(A\otimes {\cal K})b}.$ 
Let us show that $B$ is stable. We will use the characterization of stable \CA s of \cite{HRm}.
Moreover, we use the following fact: If $A$ has finite radius of comparison and $c, b\in (A\otimes {\cal K})_+$ such that  $d_\tau(d)<\infty$ and 
$d_\tau(b)=\infty$ for all $\tau\in {\widetilde{QT}}(A)\setminus \{0\},$ then $d\lesssim b.$  

Let $c\in B_+$ such that there exists $e\in B_+$ such that $ec=c.$ 
Working in the commutative \SCA\, generated by $c$ and $e,$ we conclude that, if $0<\dt<1/2,$
$f_\dt(e)c=c.$ 

Let $b_1:=(1-f_{1/8}(e))^{1/2}b(1-f_{1/8}(e))^{1/2}.$ 
Then 
\beq
b_1c=cb_1=0.
\eneq
 Note that $f_{1/16}(e)\in {\rm Ped}(A\otimes {\cal K})_+.$ Therefore 
 $$
 d_\tau(f_{1/8}(e))\le \tau(f_{1/16}(e))<\infty\rforal \tau\in {\widetilde{QT}}(A).
 $$
We also have 
\beq
b
\lesssim
b^{1/2} (1-f_{1/8}(e))b^{1/2}\oplus b^{1/2}f_{1/8}(e)b^{1/2}.
\eneq
Since $b^{1/2}f_{1/8}(e) b^{1/2}\lesssim f_{1/8}(e),$ then $d_\tau(b^{1/2}f_{1/8}(e)b^{1/2})<\infty$
for all $\tau\in {\widetilde{QT}}(A).$ 
Hence 
\beq
d_\tau(b_1)=d_\tau(b^{1/2} (1-f_{1/8}(e))b^{1/2})=\infty\rforal \tau\in \widetilde{QT}(A)\setminus \{0\}.
\eneq
Since $A$ has finite radius of comparison, as mentioned above, we have 
that
\beq
f_{1/8}(e) \lesssim b_1. 
\eneq
There is, by Lemma 2.2 of \cite{R1},  $x\in A\otimes {\cal K}$ such that
\beq
x^*x=f_{1/16}(f_{1/8}(e))\andeqn  xx^*\in {\rm Her}(b_1).
\eneq
Let $x=v|x|$ be the polar decomposition of $x$ in $A^{**}.$ 
Then $\phi: {\rm Her}(f_{1/6}(f_{1/8}(e)))\to {\rm Her}(b_1)$ defined by
$\phi(d)=vdv^*$ for all $d\in {\rm Her}(f_{1/6}(f_{1/8}(e)))$ is \hm. 
Note that $f_{1/6}(f_{1/8}(e))\le f_{1/4}(e)$ and $f_{1/4}(e)c=c.$
Put $y=vc^{1/2}.$ Then $y\in A\otimes {\cal K}$ and $y^*y=c$ and $yy^*\in {\rm Her}(b_1).$
Hence 
\beq
c=yy^* \andeqn c\perp yy^*.
\eneq 
Since $c$ is chosen arbitrarily in $B_+$ with the property that there is $e\in B_+$ such that $ec=c,$ 
it follows from Theorem 2.1 of \cite{HRm} that $B$ is stable.  By Brown's stable isomorphism theorem (\cite{Brst}), $B\cong A\otimes {\cal K}.$ 
This proves (3). 

Since $B$ is stable, there is  a sequence of {{mutually orthogonal}} nonzero 
elements  $b_{0,n}\in B_+$ ($n\in \N$) such that
$b_{0,1}\cuapprox b_{0,n}$ for all $n\in \N$ and $b_0=\sum_{n=1}^\infty b_{0,n}/n\in B$ is a 
strictly positive element.  We have $H_{b_{0,n}}\cong H_{b_{0, m}}$ for all $n, m\in \N.$
It follows that $H_b=H_{b_0}\cong l^2(H_{b_0})$ as Hilbert $A$-modules.
By Proposition 7.4 of \cite{Lan}, $l^2(H_{b_0})\cong l^2(A)=H_A$ as Hilbert $A$-module.   This proves (2).

For (1), since we have shown that $H_b\cong H_A,$  we may apply Kasparov's absorbing theorem (\cite{K}). 
\end{proof}

\section{Projective Hilbert Modules}

The main result of this section is Theorem \ref{Tproj} which states that, for separable \CA\, $A,$
 every countably generated Hilbert $A$-module 
is projective. 

Note that in the following statement, we use the fact that $B(H)=LM(H)$ 
(see Theorem 1.5 of \cite{Lninj}).

\begin{lem}\label{Ladd}
Let $A$ be a  \CA\, and $H$ be Hilbert $A$-modules.
Suppose that $T\in B(H) $ is a bounded module map 
such that $T(H)$ is a Hilbert $A$-submodule of $H$ and $L\in LM(K(H))$ 
the corresponding left multiplier. 
Then $LK(H)=K(T(H)).$ 
\end{lem}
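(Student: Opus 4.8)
The plan is to compute $LK(H)$ as a closed span of rank-one maps and match it with the compacts supported on the submodule $T(H)$. Under the identification $B(H)=LM(K(H))$ (Theorem 1.5 of \cite{Lninj}) the multiplier $L$ acts on $K(H)$ by left composition, $Lk=T\circ k$, so on a generator one gets the key computation
\[
L\theta_{x,y}=T\circ\theta_{x,y}=\theta_{Tx,y}\rforal x,y\in H,
\]
because $(T\circ\theta_{x,y})(z)=T(x\la y,z\ra)=(Tx)\la y,z\ra$. Since $F(H)=\operatorname{span}\{\theta_{x,y}\}$ is dense in $K(H)$ and $L$ is bounded, I would conclude that $\overline{LK(H)}$ is the closed linear span of $\{\theta_{Tx,y}:x,y\in H\}$. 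The hypothesis that $T(H)$ is a (closed) Hilbert submodule is used next through the identity $\{Tx:x\in H\}=T(H)$, so these generators run over exactly the maps $\theta_{u,y}$ with $u\in T(H)$ and $y\in H$; by the description of $K(H_1,H_2)$ in Definition \ref{Hild} this closed span is $K(H,T(H))$, the compacts with range in $T(H)$. The inclusion $K(H,T(H))\subset LK(H)$ is then immediate, since each generator $\theta_{u,y}=\theta_{Tx,y}=L\theta_{x,y}$ lies in $LK(H)$; the reverse inclusion $\overline{LK(H)}\subset K(H,T(H))$ follows from continuity of $L$ together with the closedness of the target.

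The hard part will be to upgrade this to an \emph{exact} equality $LK(H)=K(H,T(H))$, that is, to show that the image of the left multiplier is already norm-closed rather than merely dense. This is precisely where closedness of $T(H)$ is essential: viewing $T$ as a surjective bounded module map $H\to T(H)$ onto a complete submodule, the open mapping theorem yields a constant $C$ so that every $u\in T(H)$ has a preimage $x$ with $\|x\|\le C\|u\|$, and I would use this norm control to lift an approximating sequence of finite sums $\sum_i\theta_{u_i,y_i}$ for a given element of $K(H,T(H))$ to a convergent sequence in $K(H)$ mapping onto it. When $T$ happens to be adjointable, this step can be bypassed cleanly by applying Theorem \ref{1proj}(1) to the surjection $T\colon H\to T(H)$ to produce a module section $\psi_1$ with $T\circ\psi_1=\operatorname{id}_{T(H)}$, whence every $m\in K(H,T(H))$ equals $L(\psi_1 m)$; the genuine obstacle is carrying out the lifting for non-adjointable $T$, for which the open-mapping estimate is the tool I expect to need.

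Finally, to reconcile with the symmetric object $K(T(H))$ (the compacts on $T(H)$ with both variables in $T(H)$), I would compress on the right by the associated right multiplier $L^{*}$ in the sense of \ref{RN}: the same generator computation gives $L\theta_{x,y}L^{*}=\theta_{Tx,Ty}$, and since $\{Tx:x\in H\}=T(H)$ these exhaust the generators $\theta_{u,v}$ with $u,v\in T(H)$, so that $\overline{LK(H)L^{*}}=K(T(H))$. Thus the core of the argument is the single rank-one identity $L\theta_{x,y}=\theta_{Tx,y}$ combined with the submodule identity $\{Tx\}=T(H)$; everything else is density, continuity, and the one delicate point of closedness coming from the closed-range hypothesis on $T$.
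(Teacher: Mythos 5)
Your skeleton---the rank-one identity $L\theta_{x,y}=\theta_{Tx,y}$, density of $F(H)$ in $K(H)$, the identification $\overline{LK(H)}=K(H,T(H))$, and the recognition that the real issue is whether $LK(H)$ is already norm-closed---has exactly the shape of the paper's argument, and your adjointable-case shortcut via Theorem \ref{1proj}(1) works. The gap is in the tool you propose for the non-adjointable case. Vector-level lifts with norm control (each $u\in T(H)$ lifted to $x$ with $\|x\|\le C\|u\|$) cannot convert an approximating sequence $S_n\to S$ with $S_n\in F(H,T(H))$ into an operator-norm convergent sequence of lifts: smallness of $\|S_n-S_m\|$ gives no control whatsoever over the first legs of a rank-one decomposition $S_n-S_m=\sum_i\theta_{u_i,y_i}$ (there can be massive cancellation; already on $\ell^2$ the infimum of $\sum_i\|u_i\|\,\|y_i\|$ over such decompositions is a trace-type norm, not dominated by any fixed multiple of the operator norm), so leg-by-leg lifting is not a bounded procedure and the lifted sequence need not be Cauchy. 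The paper's device avoids controlling the lifts altogether: with $H_0=\ker T$ and $p\in K(H)^{**}$ the open projection of the hereditary subalgebra $K(H_0)$ (Lemma 2.13 of \cite{Lninj}), one has $Lp=0$, the map $b\mapsto (1-p)b$ identifies $K(H)$ modulo the closed right ideal $\overline{K(H_0)K(H)}$ isometrically with $(1-p)K(H)$ (hence a complete space), and the bounded inverse $\tilde T^{-1}$ of the map induced by $T$ on $H/H_0\cong(1-p)H$ yields the operator-level estimate $\|(1-p)(F_n-F_m)\|\le\|\tilde T^{-1}\|\,\|S_n-S_m\|$ for \emph{arbitrary} lifts with $LF_n=S_n$. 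Thus $(1-p)F_n\to(1-p)b$ for some $b\in K(H)$, and $Lb=L(1-p)b=\lim LF_n=S$. In short: the open mapping theorem is applied once, at the level of the quotient module, to make $L$ bounded below modulo its kernel ideal---not at the level of individual vectors.

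Separately, your reading of the conclusion is in fact the correct one. As literally stated, $LK(H)=K(T(H))$ fails: for $A=\mathbb{C}$, $H=\mathbb{C}^2$ and $T=\theta_{e_1,e_1}$ one gets $LK(H)=e_{11}M_2(\mathbb{C})$ (the full first row), strictly larger than $K(T(H))=\mathbb{C}e_{11}$; correspondingly, the equality $LF(H)=F(H_1)$ asserted in the paper's proof holds only as the containment $F(H_1)\subseteq LF(H)$, which is the only direction the proof uses. What the argument genuinely establishes is your version, $LK(H)=K(H,T(H))$ in the notation of \ref{Hild}, together with the containment $K(T(H))\subseteq LK(H)$; your compression $\overline{LK(H)L^{*}}=K(T(H))$ is a reasonable reconciliation, and is legitimate even for non-adjointable $T$ since right multiplication by $T^{*}$ maps $K(H)$ into $K(H)$ by \ref{RN}. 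In the lemma's only application (Theorem \ref{BLT1}) the map $T$ is surjective, so $T(H)=H$ and all these readings coincide; your corrected formulation is therefore harmless there, and it is the statement your proof, once the closedness step is repaired as above, actually delivers.
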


\begin{proof}
Let $H_1=T(H)$ and  $F(H)$ be the linear span of bounded module maps of the form 
$\theta_{x, y}$ for $x, y\in H.$ 
Note that $Lb(x)=Tb(x)$ for all $b\in K(H)$ and $x\in H.$ 

Let $\xi,\zeta\in H_1\subset H.$ 
Since $T:H\to H_1$ is surjective, {{there is $x\in H$ such that 
$T(x)=\xi.$ Then $T\circ \theta_{x, \zeta}=\theta_{\xi, \zeta}.$
This implies that}} $LF(H)=F(H_1).$
Let $H_0={\rm ker} T$ be  as a Hilbert submodule of $H.$ Then $K(H_0)\subset K(H)$
is a hereditary \SCA\, of $K(H)$ (see Lemma 2.13 of \cite{Lninj}). Let $p$ be the 
open projection in $K(H)^{**}$  corresponding to $K(H_0).$ Then (working in $H^\sim$ if necessary) $Lp=0$ and 
$Lb=L(1-p)b$ for all $b\in K(H).$ We identify $H/H_0$ with $(1-p)H$ ($\subset H^\sim$). Let $\tilde T: (1-p)H\to H_1$ be 
the bounded module map induced by $T$ which has a bounded inverse $\tilde T^{-1}$
as $T$ is surjective. Note that $\tilde T^{-1}(T(z))=(1-p)z$ for all $z\in H.$

To show that $T(K(H))=K(H_1),$ fix $S\in K(H_1).$ Let $S_n\in F(H_1)$ be such that\\
 $\lim_{n\to\infty}\|S_n-S\|=0.$
Since $LF(H)=F(H_1),$ we may choose $F_n\in F(H)$ such that $LF_n=S_n,$ $n\in \N.$
Then, for any $x\in H,$ $\tilde T^{-1}(LF_n(x))=(1-p)F_n(x).$ 

We claim that $\{(1-p)F_n\}$ converges in norm to an element of the form $(1-p)b$ for some 
$b\in K(H).$
Note that, for any $n,m\in \N,$ and any $x\in H,$
\beq\nonumber
\|(1-p)F_n(x)-(1-p)F_m(x)\|=\|\tilde T^{-1}(LF_n(x)-LF_m(x))\|
\le \|\tilde T^{-1}\|\|S_n-S_m\|\|x\|.
\eneq
This implies  that $\{(1-p)F_n\}$ is Cauchy in norm and it must converges to an 
element of the form $(1-p)b$ for some $b\in K(H),$ as $(1-p)K(H)$ is closed. 
It then follows that, for any $b\in K(H),$
\beq\nonumber
Lb=L(1-p)b=\lim_{n\to\infty}L(1-p)F_n=\lim_{n\to\infty}S_n= S.
\eneq
This shows that $L K(H)=K(H_1).$
\end{proof}

\begin{df}\label{Dtildes-1}
Let $H$ and $H_1$ be Hilbert $A$-modules and 
$T: H_1\to H$ be a surjective bounded module map. Then $T$ induces an invertible bounded map 
${\tilde T}: H_1/{\rm ker}T\to H$ from  Banach $A$-modules $H_1/{\rm ker} T$ onto  $H.$
  Denote by ${\tilde T}^{-1}: H\to H_1/{\rm ker}T$ the inverse (which is also bounded).
\end{df}

\begin{thm}\label{BLT1}
Let $A$ be a $\sigma$-unital \CA. 

For any  countably generated  Hilbert $A$-modules $H_1,H_2, H_3,$ any bounded module maps 
$T_1: H_1\to H_3$ and $T_2: H_2\to H_3.$ Suppose that
$T_1$ is surjective,
then there is a bounded module map $T_3: H_2\to H_1$ such 
that
$$
T_1\circ T_3=T_2.
$$
Moreover, $\|T_3\|= 
\|{\tilde T}_1^{-1}\circ T_2\|.$

\end{thm}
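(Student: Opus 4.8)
The plan is to realize $T_3$ as a pointwise limit of norm-controlled lifts of the truncations $T_2E_n$ of $T_2$, where $\{E_n\}$ runs through an approximate unit of $K(H_2)$, and then to read off the sharp norm from a uniform bound on these lifts. I begin with the soft facts and the easy half of the norm identity. Since $T_1$ is a surjective bounded linear map between Banach spaces, the open mapping theorem shows its range is closed (all of $H_3$) and that the induced Banach-module map $\tilde T_1\colon H_1/{\rm ker}\,T_1\to H_3$ of Definition \ref{Dtildes-1} is an isomorphism with bounded inverse $\tilde T_1^{-1}$. Writing $q\colon H_1\to H_1/{\rm ker}\,T_1$ for the contractive quotient map and $c=\|\tilde T_1^{-1}\circ T_2\|$, any $T_3$ with $T_1\circ T_3=T_2$ satisfies $q\circ T_3=\tilde T_1^{-1}\circ T_1\circ T_3=\tilde T_1^{-1}\circ T_2$, whence $\|T_3\|\ge\|q\circ T_3\|=c$. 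Thus only the construction of a $T_3$ with $\|T_3\|\le c$ remains.

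Next I would isolate the key lifting tool: every ``compact'' module map into $H_3$ lifts through $T_1$ with controlled norm. Applying Lemma \ref{Ladd} to $H=H_1\oplus H_3$ and the map $\hat T_1(\xi\oplus\eta)=0\oplus T_1\xi$, whose range is the Hilbert submodule $H_3$, gives $\hat T_1K(H)=K(H_3)$; compressing to the $H_2$ and $H_1$ factors, every $S\in K(H_2,H_3)$ admits some $\tilde S\in K(H_2,H_1)$ with $T_1\tilde S=S$. For the quantitative norm I would instead lift by hand on finite-rank pieces: for a rank-one map $\theta_{\xi,\eta}$ with $\xi\in H_3,\ \eta\in H_2$, choose $\zeta\in H_1$ with $T_1\zeta=\xi$ and $\|\zeta\|\le\|\tilde T_1^{-1}\xi\|+\dt$, and lift $\theta_{\xi,\eta}$ to $\theta_{\zeta,\eta}\in K(H_2,H_1)$; assembling over a finite-rank approximation of $S$ yields a lift with $\|\tilde S\|\le\|\tilde T_1^{-1}\circ S\|+\dt$.

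For the construction, since $H_2$ is countably generated, Proposition \ref{TKHsigma} supplies an increasing approximate unit $\{E_n\}\subset K(H_2)_+$ with $\|E_n\|\le 1$ and $E_nx\to x$ for all $x\in H_2$, which I would arrange to satisfy $E_{n+1}E_n=E_n$. Then the increments $F_n=E_n-E_{n-1}$ (with $E_0=0$) are positive, satisfy $F_nF_m=0$ for $|n-m|\ge 2$, and $\sum_nF_n=1$ strictly. Each $T_2F_n\in K(H_2,H_3)$ with $\|\tilde T_1^{-1}\circ(T_2F_n)\|\le c\|F_n\|\le c$, so by the previous step I lift it to $G_n\in K(H_2,H_1)$ with $T_1G_n=T_2F_n$ and $\|G_n\|$ close to $\|\tilde T_1^{-1}\circ T_2F_n\|$. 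Setting $T_3^{(n)}=\sum_{k\le n}G_k$ gives $T_1T_3^{(n)}=T_2E_n$, hence $T_1T_3^{(n)}x\to T_2x$, and I would define $T_3=\lim_n T_3^{(n)}$ pointwise, so that $T_1\circ T_3=T_2$ holds automatically.

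The hard part will be controlling the telescoping family $T_3^{(n)}$ so that it converges pointwise while its norm stays at most $c$. The mechanism is the near-orthogonality $F_kF_m=0$ for $|k-m|\ge 2$: choosing the $G_k$ as near-minimal lifts that inherit the resulting near-orthogonality of their ranges in $H_3$, one estimates $\|\sum_kG_kx\|^2$ by the corresponding quantities for $\sum_kT_2F_kx$, which are controlled by $c^2\|x\|^2$, so that the partial sums are Cauchy and uniformly bounded. Concretely I would realize each $T_3^{(n)}$ as a genuine lift of $T_2E_n$ with $\|T_3^{(n)}\|\le c+\ep_n$ and $\ep_n\searrow 0$, built inductively by correcting with the small increment lift $G_{n+1}$, so that $\{T_3^{(n)}x\}$ is Cauchy on a generating set and hence on all of $H_2$. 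Passing the uniform bound to the limit then gives $\|T_3x\|=\lim_n\|T_3^{(n)}x\|\le\lim_n(c+\ep_n)\|x\|=c\|x\|$, i.e. $\|T_3\|\le c$, and with the lower bound this forces $\|T_3\|=c=\|\tilde T_1^{-1}\circ T_2\|$. I expect the genuinely delicate point to be reconciling pointwise convergence of these lifts with the uniform norm bound $c$, which is exactly where $\sigma$-unitality (the approximate unit with $E_{n+1}E_n=E_n$) and Lemma \ref{Ladd} carry the weight.
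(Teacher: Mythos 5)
Your proposal has a genuine gap, and it sits exactly where you yourself locate ``the hard part.'' The quantitative lifting claim in your second step --- that assembling near-minimal lifts of rank-one pieces $\theta_{\xi_j,\eta_j}\mapsto\theta_{\zeta_j,\eta_j}$ over a finite-rank approximation of $S$ yields $\|\tilde S\|\le\|\tilde T_1^{-1}\circ S\|+\dt$ --- is unjustified: controlling each $\|\zeta_j\|$ individually only bounds the norm of the sum by something like $\sqrt{N}\,\|\tilde T_1^{-1}\circ S\|$, where $N$ is the number of rank-one terms. The paper is explicit about this obstruction in two places (the remark following Proposition \ref{FHP}, and the remark after Theorem \ref{appprojT}, where precisely this generator-by-generator lifting produces a $\sqrt{k}$ factor and destroys any uniform norm estimate). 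The claim for a single compact $S$ is in fact true, but it must be proved differently --- e.g.\ take any compact lift (via Lemma \ref{Ladd}) and correct it by an element of the right ideal $J=\overline{K(\ker T_1)\,(A\otimes{\cal K})}$, using that the distance to $J$ equals the quotient norm $\|(1-p)\,\cdot\,\|$ --- not by your rank-one assembly. More seriously, the telescoping construction itself does not close: the increments $F_n=E_n-E_{n-1}$ are nearly orthogonal in the \emph{domain}, but there is no reason the lifts $G_n$ have (even nearly) orthogonal ranges in $H_1$; the kernel $\ker T_1$ can be positioned so that near-minimal lifts of orthogonal pieces overlap badly. Consequently neither the uniform bound $\|T_3^{(n)}\|\le c+\ep_n$ nor the pointwise Cauchy property of $T_3^{(n)}x$ follows from what you wrote, and note $\|G_{n+1}\|$ does \emph{not} tend to $0$ (only $F_{n+1}x\to 0$ pointwise), so the crude estimate $\|T_3^{(n+1)}\|\le\|T_3^{(n)}\|+\|G_{n+1}\|$ diverges; re-lifting from scratch at each stage restores the bound but destroys convergence. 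Your scheme needs these two requirements simultaneously, and no mechanism in the proposal delivers both.

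The paper's proof avoids the approximation scheme entirely, and this is the idea your argument is missing. One first stabilizes: replacing $H_i$ by $H_i\oplus H_A$ (and $T_1,T_2$ by $T_1\oplus{\rm id},T_2\oplus{\rm id}$) costs nothing and, by Kasparov absorption, reduces to $H_1\cong H_2\cong H_3=H_A$. Then $B(H_A)=LM(B)$ for $B=A\otimes{\cal K}$ (Theorem 1.5 of \cite{Lnbd}), so the problem becomes: given left multipliers $L_1,L_2$ with $L_1$ inducing an isomorphism $\tilde L_1\colon B/J\to B$ for the right ideal $J=\overline{K(H_0)B}$, $H_0=\ker T_1$, lift the module map $\tilde L_1^{-1}\circ L_2\colon B\to B/J$ to a module map $L\colon B\to B$ of \emph{equal} norm. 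That exact-norm lifting is precisely 3.11 of \cite{Br2} (L.~G.~Brown's semicontinuity/multiplier machinery), and it is the single result carrying the entire quantitative content $\|T_3\|=\|\tilde T_1^{-1}\circ T_2\|$; the surjectivity bookkeeping uses Lemma \ref{Ladd} as you anticipated, and your easy lower bound $\|T_3\|\ge\|\tilde T_1^{-1}\circ T_2\|$ is correct. Without either invoking such a norm-preserving lifting theorem or supplying a genuinely new argument for the uniform boundedness and compatibility of your lifts, the proposal does not constitute a proof.
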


\begin{proof}
Without loss of generality, we may assume that $\|T_2\|=1.$ 
Let $H_4=H_1\oplus H_A,$ $H_5=H_2\oplus H_A$ and $H_6=H_3\oplus H_A.$
Let $T_4=T_1\oplus {\rm id}_{H_A}: H_4\to H_6$  and $T_5=T_2\oplus {\rm id}_{H_A}: H_5\to H_6.$ 
Suppose $T_1$ is surjective. Then $T_4$ is surjective. 
Suppose that there is a bounded module map $T': H_5\to H_4$ such that
$$
T_4\circ T'=T_5\andeqn \|T'\|= 
\|{\tilde T}_4^{-1}\circ T_5\|
$$
One computes  that 
$$
\|{\tilde T}_4^{-1}\circ T_5\|=
\|{\tilde T}_1^{-1}\circ T_2\|.
$$
One also has that 
$$
T_4\circ T'|_{H_2}=T_5|_{H_2}=T_2.
$$
Since ${\rm range}(T_2)\subset H_3,$ $T_4(T'(H_2))=T_5(H_2)\subset H_3.$ 
Let $P_1: H_4\to H_1,$ $P_{H_A}: H_4\to H_A$  and $P_3: H_6\to H_3$ be the orthogonal projections. Then 
$P_3\circ T_4\circ T'|_{H_2}=T_2.$  Write $T'=P_1T'+(1_{H_4}-P_1)T'.$
Note that $1_{H_4}-P_1=P_{H_A}$ and $P_3T_4(1_{H_4}-P_1)=0.$
Then 
\beq\label{BLT1-10}
P_3\circ T_4\circ T'|_{H_2}&=&P_3\circ T_4\circ P_1\circ T'|_{H_2}+P_3\circ T_4\circ (1_{H_4}-P_1)\circ T'|_{H_2}\\
&=&P_3\circ T_4\circ P_1\circ T'|_{H_2}+0\\\label{BLT1-10+}
&=&
T_1\circ P_1\circ T'|_{H_2}.
\eneq
Define $T=P_1\circ T'|_{H_2}.$  Then, by \eqref{BLT1-10+},
\beq\label{BLT1-11}
T_1\circ T=T_2\andeqn \|T\|=\|P_1\circ T'|_{H_2}\|=\|\tilde T_1^{-1}\circ T_2\|.
\eneq	

Therefore, without loss of generality, we may assume that 
$H_1\cong H_2\cong H_3=H_A.$  Put $B=A\otimes {\cal K}.$ $B$ is a $\sigma$-unital \CA. 
We view $T_1$ as a bounded module map from $H_A$ onto $H_A.$ 
Let $H_0={\rm ker} \,T_1.$ Then $H_0$ is a Hilbert  $A$-submodule of $H_A$ and 
$K(H_0)\subset K(H_A)=B$ is a hereditary \SCA\, (see Lemma 2.13 of \cite{Lninj}). Let $p\in B^{**}$ be the open projection 
corresponding to the hereditary \SCA\, $K(H_0).$ Let $L_1\in LM(B)$ be given by the bounded module 
map $T_1$ (see Theorem 1.5 of \cite{Lnbd}).  Note that $L_1b(z)=T_1(b(z))$ for all $z\in H_A$ and $b\in B.$
For any $a\in K(H_0)\subset B,$ $L_1a=0.$  It follows that $L_1p=0$ and 
$L_1(1-p)=L_1.$

Let $J=\overline{K(H_0)B}$ be the right ideal of $K(H_A)=B.$ Consider the quotient Banach $B$-module 
$B/J.$  Note that the module map ${\bar b}\mapsto (1-p)b$ is an isometry. So we may 
identify $B/J$ with $(1-p)B.$  Define $\tilde L_1: B/J\to B$ by $\tilde L_1((1-p)b)=L_1b$
for $b\in B.$  Viewing $B$ as a Hilbert $B$-module,  one has $K(B)=B.$ 
By Lemma \ref{Ladd},  viewing $B$ as Hilbert $B$-module,
$L_1B=L_1K(B)=K(B)=B.$ It follows that 
$\tilde L_1$ is also a surjective map and hence has bounded inverse as a bounded linear map.
Denote its inverse as $\tilde L_1^{-1}.$  Note that ${\tilde L}_1^{-1}L_1b=(1-p)b$\, for $b\in B$
(recall that $L_1$ is surjective). 
 In particular,  ${\tilde L}_1^{-1}$ is a bounded module map 
from $K(H_A)=B$ to $(1-p)B.$

We may also use the identification $H_A/H_0=(1-p)H_A.$   Now $T_1$ induces a bounded module 
map ${\tilde T}_1$ from $(1-p)H_A$ onto $H_A$ which has a bounded inverse ${\tilde T}_1^{-1}.$ 
Let $L_2\in LM(B)$ be given by the bounded module map $T_2.$ 
Then the map $L_2: B\to B$ given by $L_2(b)=L_2b$ for all $b\in B$ is a bounded $B$-module map 
from $B$ into $B=K(H_A).$ 

Consider the bounded $B$-module map ${\tilde L}_1^{-1}\circ L_2: B\to B/J.$ 
Then, by 3.11 of \cite{Br2}, there exists a bounded module map $L: B\to B$ such that
\beq
(1-p)L={\tilde L}_1^{-1}\circ L_2\andeqn \|L\|= \|{\tilde L}_1^{-1}\circ L_2\|.
\eneq

It follows that ${\tilde L_1}(1-p)L=L_2.$  Recall that $\tilde L_1(1-p)=L_1.$  Therefore
\beq\label{BLT1-12}
L_1\circ L=L_2.
\eneq
We identify $L$ with a left multiplier in $LM(B)$ (see Theorem 1.5 of \cite{Lnbd}). 
Let $T: H_1=H_A\to H_A=H_3$ be the bounded module map 
given by $L$ (see again Theorem 1.5 of \cite{Lnbd}). Then 
we obtain 
that
\beq\label{BLT1-13}
T_1\circ T=T_2\andeqn \|T\|=\|\tilde T_1^{-1}\circ T_2\|.
\eneq

The commutative diagrams on $B$ level may be illustrated as follows (with $\pi: B\to B/J$ the quotient map
given by $(1-p)$)
$$
\begin{array}{ccccc}
\hspace{-0.1in}B &&\\
\vspace{0.1in}
\downarrow_{L_2}& \searrow^L&\\
\hspace{-0.1in}B&\hspace{-0.1in}\twoheadleftarrow^{L_1} &B\\
\vspace{0.1in}\hspace{-0.2in}^{^{\widetilde L_1^{-1}}}\updownarrow_{_{\widetilde L_1}}& \swarrow_{\pi}&\\
B/J &&
\end{array} \,\,\,\,{\rm and}\hspace{0.6in} \begin{array}{ccccc}
   B&\longrightarrow^{L}&B\\
   \vspace{0.1in}\hspace{-0.3in}^{^{\widetilde L_1^{-1}L_2}}\downarrow 
  & \swarrow_\pi&\\
   B/J&&\end{array}.
$$


\end{proof}

\begin{cor}\label{Projm}
Let $A$ be a $\sigma$-unital \CA\, and let 
$$
0\to H_1\to^{\iota} H_2\to^{s} H_3\to 0
$$
be a short exact sequence  of countably generated Hilbert $A$-modules. Then 
it splits.  Moreover, the splitting map $j: H_3\to H_2$ has $\|j\|=\|{\tilde s}^{-1}\|.$
\end{cor}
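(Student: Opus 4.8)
The plan is to derive this corollary directly from Theorem \ref{BLT1}, which is exactly the projectivity statement phrased for bounded module maps. Given the short exact sequence
$$
0\to H_1\to^{\iota} H_2\to^{s} H_3\to 0,
$$
the map $s: H_2\to H_3$ is a surjective bounded module map between countably generated Hilbert $A$-modules. To produce a splitting, I would apply Theorem \ref{BLT1} with the roles $H_1\rightsquigarrow H_2$ (the source of $s$), $H_3\rightsquigarrow H_3$ (the target), and take the second map to be the identity ${\rm id}_{H_3}: H_3\to H_3$. Since $s$ is surjective and both $H_2, H_3$ are countably generated, Theorem \ref{BLT1} yields a bounded module map $j: H_3\to H_2$ with
$$
s\circ j={\rm id}_{H_3}\andeqn \|j\|=\|{\tilde s}^{-1}\circ {\rm id}_{H_3}\|=\|{\tilde s}^{-1}\|.
$$
This $j$ is precisely the desired right inverse to $s$, with the asserted norm.

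The remaining point is to verify that the existence of such a $j$ genuinely gives a splitting of the short exact sequence in the sense that $H_2\cong H_1\oplus H_3$ (or at least that $H_2 = \iota(H_1)\oplus j(H_3)$ as a module decomposition). First I would observe that $s\circ j={\rm id}_{H_3}$ forces $j$ to be injective and $j(H_3)$ to be a closed submodule on which $s$ restricts to an isomorphism onto $H_3$. Next, for any $x\in H_2$ write $x=(x-j(s(x)))+j(s(x))$; since $s(x-j(s(x)))=s(x)-s(x)=0$, the first summand lies in $\ker s=\iota(H_1)$, giving $H_2=\iota(H_1)+j(H_3)$. The sum is direct because any element of $\iota(H_1)\cap j(H_3)$ is of the form $j(y)$ with $0=s(j(y))=y$, hence zero. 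Thus the sequence splits with splitting map $j$, and $\|j\|=\|{\tilde s}^{-1}\|$ as recorded.

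I do not expect a serious obstacle here, since the corollary is essentially a restatement of Theorem \ref{BLT1} applied to the identity map, together with the standard algebraic bookkeeping that a right inverse to a surjection of modules yields a direct-sum decomposition. The only mild subtlety worth being careful about is that ``splitting'' for Hilbert modules with bounded (not necessarily adjointable) morphisms means a Banach-module direct sum decomposition, not an orthogonal one; the map $j(H_3)$ need not be an orthogonal complement of $\iota(H_1)$. For this reason I would phrase the conclusion as a topological direct sum $H_2=\iota(H_1)\oplus j(H_3)$ rather than claiming orthogonality, which matches the category of bounded module maps used throughout the paper. The norm identity $\|j\|=\|{\tilde s}^{-1}\|$ transfers verbatim from the ``Moreover'' clause of Theorem \ref{BLT1} with $T_2={\rm id}_{H_3}$.
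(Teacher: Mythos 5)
Your proof is correct and takes essentially the same route as the paper, which also applies Theorem \ref{BLT1} with $T_1=s$ and $T_2={\rm id}_{H_3}$ to obtain $j$ with $s\circ j={\rm id}_{H_3}$ and $\|j\|=\|\tilde s^{-1}\|$. Your additional verification that a bounded right inverse yields the topological (non-orthogonal) direct sum $H_2=\iota(H_1)\oplus j(H_3)$ is standard bookkeeping that the paper leaves implicit, and your caution that this is only a Banach-module splitting, not an orthogonal one, is exactly right.
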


\begin{proof}
Consider the diagram:
$$
\begin{array}{ccccc}
&& H_2 &\\
& & \hspace{0.1in}\downarrow_{s} &\\
H_3 & \stackrel{{\rm id}_{_{H_3}}}{\twoheadrightarrow}
& H_3 
\end{array}
$$
By Theorem \ref{BLT1}, there is a bounded module map $T: H_3\to H_2$ such that
$$
s\circ T={\rm id}_{H_3}\andeqn \|T\|=\|\tilde s^{-1}\|.
$$
\end{proof}



We need  the following lemma which the authors could not locate a
reference.

\begin{lem}\label{openmap}
Let $X$ be a Banach space and let $H$ be a separable Banach space.
Suppose that $T: X\to H$ is a surjective bounded linear map.  Then
there is a separable subspace $Y\subset X$ such that $TX=H.$
\end{lem}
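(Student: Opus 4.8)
The plan is to build $Y$ as the closed linear span of a countable collection of elements chosen by a standard countable-exhaustion argument, using the open mapping theorem as the quantitative engine. Since $T: X \to H$ is surjective and both spaces are Banach, the open mapping theorem gives a constant $C>0$ such that for every $h \in H$ there exists $x \in X$ with $Tx = h$ and $\|x\| \le C\|h\|$. First I would fix a countable dense subset $\{h_n\}_{n=1}^\infty$ of $H$ (available because $H$ is separable), and for each $n$ pick a preimage $x_n \in X$ with $Tx_n = h_n$ and $\|x_n\| \le C\|h_n\|$. Set $Y = \overline{\operatorname{span}}\{x_n : n \in \N\}$, which is a separable subspace of $X$.

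It then remains to check that $T(Y) = H$. The inclusion $T(Y) \subset H$ is trivial, so the content is surjectivity. Here I would argue that $T(Y)$ is both dense and closed in $H$. Density is immediate: $T(Y)$ contains every $h_n$, so it contains the dense set $\{h_n\}$. For closedness, the key step is to observe that the restriction $T|_Y : Y \to H$ is itself a surjection onto its (closed) range once we know density plus a uniform preimage bound; more concretely, I would show that $T(Y) = H$ directly by a successive-approximation (telescoping) argument. Given an arbitrary $h \in H$, choose $h_{n_1}$ with $\|h - h_{n_1}\| < \|h\|/2$, then approximate the residual $h - h_{n_1}$ by some $h_{n_2}$ within half its norm, and iterate; the corresponding preimages $x_{n_k}$ satisfy $\|x_{n_k}\| \le C\|h - \sum_{j<k} h_{n_j}\|$, which decays geometrically. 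The series $\sum_k x_{n_k}$ then converges absolutely in the Banach space $Y$ to some $y \in Y$ with $Ty = h$, using continuity of $T$.

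The main obstacle, such as it is, lies in making the telescoping bookkeeping precise so that the partial sums of preimages stay inside $Y$ and the norm estimates close up to give a convergent (absolutely summable) series. This is entirely routine once the geometric decay is set up, but one must be careful that each approximating element is drawn from the fixed countable set $\{h_n\}$ so that its chosen preimage already lies among the $x_n$ spanning $Y$. I would present the argument in the telescoping form rather than invoking an abstract ``dense range plus bounded-below'' lemma, since the explicit construction is cleaner and self-contained and directly exhibits $y \in Y$ with $Ty = h$. No separability of $X$ is needed, which matches the statement; only separability of the target $H$ is used, precisely to extract the countable dense set that seeds the construction.
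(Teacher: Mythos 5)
Your proposal is correct and takes essentially the same route as the paper: both seed a separable closed subspace $Y$ with countably many preimages supplied by the open mapping theorem and then reach an arbitrary $h\in H$ by the standard geometric-series successive-approximation, using completeness of $Y$ and continuity of $T$. The only differences are cosmetic --- you take exact norm-controlled preimages $x_n$ of a single dense sequence $\{h_n\}$ (note the bound should read $\|x_{n_k}\|\le C\|h_{n_k}\|\le \tfrac{3}{2}C\,\bigl\|h-\sum_{j<k}h_{n_j}\bigr\|$, which still decays geometrically), whereas the paper uses only density of $T(B(0,a))$ in $O(0,a\delta)$ together with countable seed sets $E_r$ at every rational radius $r$, so the induction is identical in structure.
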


\begin{proof}
Note that the Open Mapping Theorem applies here. From the open
mapping theorem (or a proof of it), there is $\dt>0$ for which
$T(B(0, a))$ is dense in $O(0, a\dt)$ for any $a>0,$  where $B(0,
a)=\{x\in X: \|x\|\le a\}$ and $O(0, b)=\{h\in H: \|h\|<b\}.$ For
each rational number $r>0,$ since $H$ is separable, one may find a
countable set $E_r\subset B(0, r)$ such that $T(E_r)$ is dense in
$O(0,r\dt).$ Let $Y$ be the closed subspace generated by $\cup_{r\in
\Q_+} E_r.$

Let $d=\dt/2$ and let $y_0\in O(0, d).$ Then $T(Y\cap B(0, 1/2))$ is
dense in $O(0,d).$ Choose $\xi_1\in Y\cap B(0,1/2)$ such that
\beq\label{Opm-1}
\|y_0-T\xi_1\|<\dt/2^2.
\eneq
In particular,
\beq\label{Opm-2}
y_1=y_0-T\xi_1\in O(0, \dt/2^2).
\eneq
Since $T(Y\cap B(0, 1/2^2))$ is dense in $O(0, \dt/2^2),$ one
obtains $\xi_2\in Y\cap B(0, 1/2^2)$ such that
\beq\label{Opm-3}
\|y_1-T\xi_2\|<\dt/2^3.
\eneq
In other words,
\beq\label{Opm-4}
y_2=y_1-T\xi_2=y_0-(T\xi_1+T\xi_2)\in O(0, \dt/2^3).
\eneq
Continuing this process, one obtains  a sequence of elements
$\{\xi_n\}\subset Y$ for which $\xi_n\in B(0, 1/2^n)$ and
\beq\label{Opm-5}
\|y_0-(T\xi_1+T\xi_2+\cdots +T\xi_n)\|<\dt/2^{n+1},\,\,\,n=1,2,....
\eneq
Define $\xi_0=\sum_{n=1}^{\infty}\xi_n.$ Note that the sum converges
in norm and therefore $\xi_0\in Y.$ By the continuity of $T,$
\beq\label{Opm-6}
T\xi_0=y_0.
\eneq
This implies that $T(Y)\supset O(0, d).$  It follows that $T(Y)=H.$
\end{proof}

\begin{thm}\label{Tproj}
Let $A$ be a separable  \CA. 
Then every countably generated Hilbert $A$-module is projective 
in the following sense:

For any    Hilbert $A$-modules $H_1,H_2, H_3,$ any bounded module maps 
$T_1: H_1\to H_3$ and $T_2: H_2\to H_3.$ Suppose that
$T_1$ is surjective and $H_2$ is countably generated,  then there is a bounded module map $T_3: H_2\to H_1$ such 
that
$$
T_1\circ T_3=T_2.
$$
Moreover, $\|T_3\|= \|{\tilde T}_1^{-1}\circ T_2\|.$
\end{thm}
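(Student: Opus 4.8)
The plan is to reduce to the countably generated case already settled in Theorem \ref{BLT1} by replacing $H_1$ and $H_3$ with suitable countably generated submodules, and then to recover the sharp norm equality by a separate quotient-norm computation.

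First I would cut down the target. Since $A$ is separable and $H_2$ is countably generated, $H_2$ is separable, so $T_2(H_2)$ is a separable subset of $H_3$ and $H_3':=\overline{T_2(H_2)}$ is a separable, hence countably generated, Hilbert $A$-submodule of $H_3$ into which $T_2$ maps $H_2.$ Next, set $X:=T_1^{-1}(H_3').$ Because $H_3'$ is a closed submodule and $T_1$ is a bounded module map, $X$ is a closed Hilbert $A$-submodule of $H_1$ and $T_1|_X\colon X\to H_3'$ is a surjective bounded linear map onto the separable Banach space $H_3'.$ Applying Lemma \ref{openmap} to $T_1|_X$ produces a separable subspace $Y_0\subseteq X$ with $T_1(Y_0)=H_3'.$

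To prepare for the norm statement, fix a countable dense set $\{h_k\}\subset H_2;$ then $\{T_2 h_k\}$ is dense in $H_3'.$ For each $k,n\in\N$ I would choose $x_{k,n}\in H_1$ with $T_1 x_{k,n}=T_2 h_k$ and $\|x_{k,n}\|<\|\tilde T_1^{-1}(T_2 h_k)\|+1/n$ (possible by the definition of $\tilde T_1^{-1}$ in \ref{Dtildes-1}); note $x_{k,n}\in X.$ Let $H_1'\subseteq H_1$ be the Hilbert $A$-submodule generated by a countable dense subset of $Y_0$ together with all the $x_{k,n}.$ Then $H_1'$ is countably generated, $H_1'\subseteq X$ (so $T_1(H_1')\subseteq H_3'$), and $H_1'\supseteq Y_0$ by Corollary \ref{Rgen}, whence $T_1(H_1')\supseteq T_1(Y_0)=H_3'.$ Thus $T_1|_{H_1'}\colon H_1'\to H_3'$ is surjective, and applying Theorem \ref{BLT1} to the countably generated modules $H_1',H_2,H_3'$ gives a bounded module map $T_3\colon H_2\to H_1'\subseteq H_1$ with $T_1\circ T_3=T_2$ and $\|T_3\|=\|\widetilde{(T_1|_{H_1'})}^{-1}\circ T_2\|.$

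It remains to identify this norm with $\|\tilde T_1^{-1}\circ T_2\|,$ which I expect to be the main obstacle. Writing $q(v)=\|\tilde T_1^{-1}(v)\|=\inf\{\|x\|:x\in H_1,\ T_1x=v\}$ and $q'(v)=\inf\{\|x\|:x\in H_1',\ T_1x=v\}$ for $v\in H_3',$ one has $q'\ge q$ trivially. For the reverse, fix $v\in H_3'$ and $\ep>0;$ by the Open Mapping Theorem applied to the surjection $T_1|_{H_1'}$ there is $C>0$ with $q'(u)\le C\|u\|$ for all $u\in H_3'.$ Choosing $h_j$ with $\|T_2 h_j-v\|<\dt,$ setting $w=T_2h_j,$ combining a near-optimal preimage $x_{j,n}\in H_1'$ of $w$ with a preimage in $H_1'$ of $v-w$ of norm at most $C\dt,$ and using that $q$ is subadditive, one obtains $q'(v)\le q(v)+(\|\tilde T_1^{-1}\|+C)\dt+1/n;$ letting $\dt\to0$ and $n\to\infty$ gives $q'(v)=q(v).$ Hence $\|\widetilde{(T_1|_{H_1'})}^{-1}\circ T_2\|=\sup_{\|h\|\le1}q'(T_2h)=\sup_{\|h\|\le1}q(T_2h)=\|\tilde T_1^{-1}\circ T_2\|,$ the desired equality. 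The crucial points are that Lemma \ref{openmap} is what upgrades ``dense range'' to genuine surjectivity onto $H_3',$ while the explicitly inserted near-optimal preimages $x_{k,n}$ are exactly what force the quotient norm computed in $H_1'$ to agree with the one computed in $H_1.$
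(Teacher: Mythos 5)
Your proposal is correct, and its skeleton coincides with the paper's own proof: the paper likewise cuts $H_3$ down to the countably generated submodule $H_3'$ generated by $T_2(H_2),$ pulls back to $T_1^{-1}(H_3'),$ applies Lemma \ref{openmap} to the restriction of $T_1$ to obtain a separable subspace $S$ with $T_1(S)=H_3',$ closes up a countable dense subset of $S$ into a countably generated Hilbert $A$-submodule $H_1''$ with $T_1(H_1'')=H_3',$ and then quotes Theorem \ref{BLT1}. Where you genuinely diverge is the ``Moreover'' clause, and there your extra step is a real supplement rather than a redundancy: applying Theorem \ref{BLT1} to $T_1|_{H_1''}$ literally yields $\|T_3\|=\|\widetilde{(T_1|_{H_1''})}^{-1}\circ T_2\|,$ where the quotient norm is computed using preimages in $H_1''$ only; this quantity is a priori only $\ge\|\tilde T_1^{-1}\circ T_2\|$ (the lower bound $\|T_3\|\ge\|\tilde T_1^{-1}\circ T_2\|$ is automatic from $T_1\circ T_3=T_2,$ so the substance of the claim is the upper bound), and the paper's proof passes over the required reverse inequality in silence. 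Your device of adjoining to the generating set the near-optimal preimages $x_{k,n}$ of $T_2h_k$ for a countable dense set $\{h_k\}\subset H_2,$ and then proving $q'=q$ on $H_3'$ by combining subadditivity of the quotient norm $q=\|\tilde T_1^{-1}(\cdot)\|$ with the open-mapping bound $q'(u)\le C\|u\|$ for the surjection $T_1|_{H_1'},$ is exactly what legitimizes the equality $\|T_3\|=\|\tilde T_1^{-1}\circ T_2\|$ as stated; your density-plus-subadditivity limit argument is sound. So: identical existence argument, but your norm argument furnishes a justification that the published proof tacitly assumes, which is what the more elaborate construction of $H_1'$ buys you. (Two minor reading points you handled correctly: the conclusion of Lemma \ref{openmap} should say $T(Y)=H$ rather than $T(X)=H,$ as its proof shows; and invoking Corollary \ref{Rgen} to see that a generated submodule contains its generators is indeed needed in the non-unital case.)
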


The statement above may be illustrated as follows:

\begin{center}
\begin{large}
\begin{equation}
\begin{tikzcd}
H_2 \ar[d, "T_2"'] \ar[dr, dashed, "T_3"]\\
H_3 & H_1 \ar[l, two heads, "T_1"]
\end{tikzcd}
\end{equation}
\end{large}
\end{center}

\begin{proof}
Let $H_3'$ be the Hilbert $A$-module generated by $T_2(H_2).$ Since $H_2$ is countably generated, 
so is $H_3'.$ Since $A$ is separable, $H_3'$ is separable. Let $H_1'=T^{-1}(H_3'),$
the pre-image of  $H_3'$ under $T_1.$
So $T_1|_{H_1'}: H_1'\to H_3'$ is surjective. Then, by Lemma \ref{openmap},  there is a separable Banach subspace of $S\subset H_1'$
such that $T_1(S)=H_3'.$  Let $\{x_n\}$ be a dense subset of $S.$
Define $H_1''$ to be the closure of ${\rm span}\{x_na: a\in A, n=1,2,....\}.$ Then $H_1''$ is a countably generated 
Hilbert $A$-module. Moreover, $S\subset H_1''\subset  H_1'\subset H_1.$ 
But  $T_1(S)=H_3'.$  Therefore  $T_1(H_1'')=H_3'.$
We have the following diagram:
\begin{center}
\begin{large}
\begin{equation}
\begin{tikzcd}
H_2 \ar[d, "T_2"'] 
\\
H_3'' & H_1'' \ar[l, two heads, "T_1"]
\end{tikzcd}
\end{equation}
\end{large}
\end{center}                     
Now $H_1', $ $H_2$ and $H_3''$ are all countably generated.
By \ref{BLT1}, there exists $T_3: H_2\to H_1''\subset H_1$ such that
\beq\label{Tproj-1}
T_1\circ T_3=T_2\andeqn \|T_3\|=\|{\tilde T}_1^{-1}\circ T_2\|.
\eneq

\end{proof}

\begin{cor}\label{Cproj}
Let $A$ be a separable  \CA\, and let 
$$
0\to H_1\to^{\iota} H_2\to^{s} H_3\to 0
$$
be a short exact sequence  of Hilbert $A$-modules.  Suppose that $H_3$ is countably generated.
Then  the short exact sequence 
splits.  Moreover, the splitting map $j: H_3\to H_2$ has $\|j\|=\|{\tilde s}^{-1}\|.$
\end{cor}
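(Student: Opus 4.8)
The plan is to deduce the statement directly from the projectivity established in Theorem \ref{Tproj}, applied to the identity map of $H_3$. Exactness of the sequence says precisely that $s\colon H_2\to H_3$ is a surjective bounded module map with ${\rm ker}\,s=\iota(H_1)$, and by hypothesis $H_3$ is countably generated. I therefore set up the lifting problem in which $H_3$ itself plays the role of the projective (countably generated) module, $s$ is the given surjection, and ${\rm id}_{H_3}$ is the map to be lifted:
\begin{center}
\begin{tikzcd}
H_3 \ar[d, "{\rm id}_{H_3}"'] \ar[dr, dashed, "j"]\\
H_3 & H_2 \ar[l, two heads, "s"]
\end{tikzcd}
\end{center}

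First I would invoke Theorem \ref{Tproj} with $T_1=s$ (surjective, defined on $H_2$), $T_2={\rm id}_{H_3}$ (defined on the countably generated module $H_3$), and common target $H_3$. This produces a bounded module map $j\colon H_3\to H_2$ with $s\circ j={\rm id}_{H_3}$ and
\[
\|j\|=\|{\tilde s}^{-1}\circ {\rm id}_{H_3}\|=\|{\tilde s}^{-1}\|,
\]
which is exactly the asserted norm identity (here $\tilde s$ is the invertible Banach-module map $H_2/{\rm ker}\,s\to H_3$ induced by $s$, as in Definition \ref{Dtildes-1}).

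It then remains to explain why the existence of such a section forces the sequence to split. Putting $P=j\circ s\colon H_2\to H_2$, the relation $s\circ j={\rm id}_{H_3}$ gives $P^2=j\circ(s\circ j)\circ s=j\circ s=P$, so $P$ is a bounded idempotent module map; the same relation makes $j$ injective, whence ${\rm ker}\,P={\rm ker}\,s=\iota(H_1)$ and ${\rm range}\,P=j(H_3)$. Thus $H_2=\iota(H_1)\oplus j(H_3)$ as a topological direct sum of submodules, which is the desired splitting with section $j$.

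The entire content sits in Theorem \ref{Tproj}, so there is no real obstacle beyond bookkeeping; the two points that genuinely require attention are that it is $H_3$ (not $H_2$) that must be countably generated in order to serve as the projective object, and that the general norm formula $\|{\tilde T}_1^{-1}\circ T_2\|$ of Theorem \ref{Tproj} collapses to $\|{\tilde s}^{-1}\|$ precisely because the lifted map is the identity. This is the exact analogue of the $\sigma$-unital Corollary \ref{Projm}, with Theorem \ref{Tproj} replacing Theorem \ref{BLT1}.
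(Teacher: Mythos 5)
Your proposal is correct and is exactly the paper's argument: the paper proves this corollary (as with its $\sigma$-unital analogue, Corollary \ref{Projm}) by applying the projectivity theorem to the diagram with $T_1=s$ and $T_2={\rm id}_{H_3}$, obtaining $j$ with $s\circ j={\rm id}_{H_3}$ and $\|j\|=\|{\tilde s}^{-1}\|$, and you correctly note that countable generation is needed for $H_3$, not $H_2$. Your additional verification that $P=j\circ s$ is a bounded idempotent yielding the topological decomposition $H_2=\iota(H_1)\oplus j(H_3)$ is sound bookkeeping that the paper leaves implicit, but it does not constitute a different route.
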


\begin{cor}\label{Cfunctor}
Let $A$ be a separable \CA.

(1)  Suppose that  $H$ is a countably generated Hilbert $A$-module.
Then, for any short exact sequence of Hilbert $A$-modules
$$
0\rightarrow H_0\stackrel{j}{\rightarrow} H_1\stackrel{s}{\rightarrow} H_2\rightarrow 0,
$$
one has the following short exact sequence:
$$
0\to B(H, H_0)\stackrel{j_*}{\rightarrow} B(H, H_1)  \stackrel{s_*}{\rightarrow}B(H,H_2)\rightarrow 0,
$$
where $j_*(\phi)=j\circ \phi$ for $\phi\in B(H, H_0)$ and 
$s_*(\psi)=s\circ \psi$ for $\psi\in B(H, H_1).$

(2) Suppose that $H_2$ is countably generated. Then, 
for any Hilbert $A$-module $H,$ and any short exact sequence
of Hilbert $A$-modules:
$$
0\rightarrow H_0\stackrel{j}{\rightarrow} H_1\stackrel{s}{\rightarrow} H_2\rightarrow 0,
$$
One has the splitting short exact sequence 
\beq\label{2rdshort}
0\to B(H, H_0)\stackrel{j_*}{\rightarrow} B(H, H_1)  \stackrel{s_*}{\rightarrow}B(H,H_2)\rightarrow 0,
\eneq
\end{cor}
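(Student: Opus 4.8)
The plan is to check exactness of the induced sequence one spot at a time, noting that in both parts the only substantial point is surjectivity of $s_*,$ which is exactly where the projectivity results enter; injectivity and middle-exactness are formal and use no hypothesis on $H.$

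First I would dispose of the two formal spots. Injectivity of $j_*$ is immediate, since $j$ is injective and $j\circ \phi=0$ forces $\phi=0.$ For exactness at $B(H,H_1),$ the inclusion $s_*\circ j_*=0$ is clear from $s\circ j=0;$ conversely, if $\psi\in B(H,H_1)$ has $s\circ \psi=0,$ then $\psi(H)\subseteq {\rm ker}\,s=j(H_0).$ Since $s$ is bounded, ${\rm ker}\,s$ is a closed submodule of $H_1,$ and $j\colon H_0\to {\rm ker}\,s$ is a continuous module bijection; by the bounded inverse theorem $j^{-1}\colon {\rm ker}\,s\to H_0$ is bounded, and it is again a module map, so $\phi:=j^{-1}\circ \psi\in B(H,H_0)$ satisfies $j_*(\phi)=\psi.$ This establishes exactness everywhere except at the last spot, and uses nothing about $H.$

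For part (1) the remaining step is surjectivity of $s_*,$ which I would read off directly from Theorem \ref{Tproj}: as $A$ is separable and $H$ is countably generated, $H$ is projective, so given $\chi\in B(H,H_2)$ the surjection $s\colon H_1\to H_2$ and the map $\chi\colon H\to H_2$ admit a lift $\psi\in B(H,H_1)$ with $s\circ \psi=\chi,$ i.e. $s_*(\psi)=\chi.$ For part (2) the first two spots are handled verbatim as above, and for the last spot I would invoke Corollary \ref{Cproj}: since $H_2$ is countably generated and $A$ is separable, the given sequence splits, so there is $r\in B(H_2,H_1)$ with $s\circ r={\rm id}_{H_2}.$ Then $r_*\colon B(H,H_2)\to B(H,H_1),$ $r_*(\chi)=r\circ \chi,$ obeys $s_*\circ r_*=(s\circ r)_*={\rm id},$ which at once gives surjectivity of $s_*$ and exhibits $r_*$ as a splitting of \eqref{2rdshort}.

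The main obstacle is entirely delegated to the earlier results: surjectivity of $s_*$ is the projectivity of $H$ (Theorem \ref{Tproj}) in part (1) and the splitting of the module sequence (Corollary \ref{Cproj}) in part (2). Everything else is routine diagram chasing, the only analytic ingredient being the bounded inverse theorem, which keeps $j^{-1}$ bounded so that the lift $\phi=j^{-1}\circ\psi$ lands in $B(H,H_0).$
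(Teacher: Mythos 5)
Your proof is correct and follows essentially the same route as the paper: surjectivity of $s_*$ via Theorem \ref{Tproj} (projectivity of the countably generated $H$) in part (1), and via the splitting map from Corollary \ref{Cproj} in part (2), with $r_*$ giving the splitting of \eqref{2rdshort} exactly as in the paper. Your verification of injectivity of $j_*$ and exactness at $B(H,H_1)$ (using that $j$ maps $H_0$ boundedly and bijectively onto the closed submodule $\ker s,$ so $j^{-1}$ is a bounded module map) correctly fills in the routine steps the paper leaves implicit.
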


\begin{proof}
 
 For (1), let us only show that $s_*$ is surjective. 
 Let $\psi\in B(H, H_2).$
Since $s: H_1\to H_2$ is surjective, by Theorem \ref{Tproj}, $H$ is projective.
There is $\tilde \psi\in B(H, H_1)$ such that $s\circ \tilde \psi=\psi.$ This implies that 
$s_*$ is surjective. 

For (2), by  Corollary \ref{Cproj},  there exists $\psi: H_2\to H_1$ such that
$
s\circ \psi={\rm id}_{H_2}.
$
For each $T\in B(H, H_2),$ define $\psi\circ T: H\to H_1.$ Then $s\circ (\psi\circ T)=T.$
Hence $s_*$ is surjective.  This implies that \eqref{2rdshort} is a short exact.
Moreover, map $\psi$ above also gives a splitting map $\psi_*: B(H, H_2)\to B(H, H_1).$
\end{proof}

\begin{df}\label{Dflat}
A finitely generated free Hilbert $A$-module is a Hilbert $A$-module 
with the form $H=e_1A\oplus e_2A\oplus \cdots \oplus e_nA,$ where $\la e_i, e_i\ra=p_i$ 
is a projection in $A,$ $i=1,2,...,n.$ Such a module is always self-dual (see, for example, Proposition \ref{Probert}). 

There is a notion of torsion free modules. Every Hilbert $A$-module is torsion free in the following sense.
Let $x\in H$ be a non-zero element  and $xa=0$ for some $a\in A.$ Then $a$ must be a left zero divisor. 
In fact,  if $xa=0,$ then $a\la x,x\ra a=0,$ or $\la x, x\ra^{1/2} a=0.$ Then $a$ must be a left zero divisor. 

One may define a Hilbert $A$-module $H$ to be {\it flat}, if, for any bounded module 
map $T: F\to H,$ where $F$ is a finitely generated free Hilbert $A$-module, 
there exists a free Hilbert $A$-module $G$ and a bounded module map 
$\phi: F\to G$ and a bounded module map $\psi: G\to H$ such that
$\phi({\rm ker}T)=0$ and $\psi\circ \phi=T$ as described as the following commutative diagram:

$$\begin{array}{ccccc}
   &&&G\\
   &&\nearrow_\phi&&\hspace{-0.1in} \searrow^\psi\\
   {\rm ker} T\hookrightarrow &\hspace{0.3in} F &&\stackrel{T}{\longrightarrow} & \hspace{0.2in}H\\
 \end{array}
\hspace{0.5in} \phi({\rm ker}T)=0
$$
This is equivalent to say the following:
for any $x_1, x_2,...,x_m\in H,$ if there are $a_i\in A,$ $1\le i\le m,$ 
such that   $\sum_{i=1}^m x_i a_i=0,$ 
there must be some integer $n,$
$y_j\in H,$ $1\le j\le m$ and $b_{i,j}\in A,$ $1\le j\le m,\, 1\le i\le n,$   such that 
$\sum_{i=1}^m a_ib_{i,j}=0$ and $x_i=\sum_{j=1}^n a_{i,j} y_j,$ $1\le i\le m.$
\end{df}

\begin{prop}\label{Tfalt}
Let $A$ be a $\sigma$-unital \CA\, and $H$ be any Hilbert $A$-module.
Suppose that $F$ is a self-dual Hilbert $A$-module and $T\in B(F, H).$ 
Then there are bounded module maps  $\phi: F\to F$ and $\psi: F\to H$ such that
$\psi\circ \phi=T$ and  $\phi|_{{\rm ker}T}=0.$ 
In particular, every Hilbert $A$-module is flat.
\end{prop}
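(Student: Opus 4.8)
The plan is to reduce the statement to a splitting property that follows from the
projectivity machinery already developed, and then use the self-duality of $F$ to
produce the factoring module maps. First I would consider the kernel $H_0={\rm ker}\,T$
as a Hilbert $A$-submodule of $F$, so that $T$ induces an injective bounded module map
$\tilde T\colon F/H_0\to H$. Since $F$ is self-dual, every bounded module map on $F$ has
an adjoint and $B(F)=L(F)$ (see Definition~\ref{DHsim}); in particular the submodule
$H_0$ should be orthogonally complemented in $F$, so I would try to write
$F=H_0\oplus F_1$ for some Hilbert $A$-submodule $F_1$. The key is that for a self-dual
module the inclusion $H_0\hookrightarrow F$ has an adjoint, and one expects the range
projection onto $H_0$ to lie in $L(F)$, giving the orthogonal summand $F_1$. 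Granting
this decomposition, the restriction $T|_{F_1}\colon F_1\to H$ is injective with the same
range as $T$.

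With $F=H_0\oplus F_1$ in hand, I would define $\phi\colon F\to F$ to be the orthogonal
projection onto the summand $F_1$, viewed as a module map into $F$; this is a bounded
module map (it is a projection in $L(F)$), and by construction $\phi|_{H_0}=0$, i.e.
$\phi|_{{\rm ker}\,T}=0$. Next I would define $\psi\colon F\to H$ by first noting that
$\phi(F)=F_1$ and that $T$ carries $F_1$ isomorphically onto its range. The natural
choice is $\psi=T$ itself, or rather $T$ precomposed with the identification of $\phi(F)$
with $F_1$. Then for any $x\in F$, writing $x=x_0\oplus x_1$ with $x_0\in H_0$ and
$x_1\in F_1$, one has $\phi(x)=x_1$ and $\psi(\phi(x))=T(x_1)=T(x_0\oplus x_1)=T(x)$,
since $T$ annihilates $H_0$. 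Thus $\psi\circ\phi=T$, as required, with both $\phi$ and
$\psi$ bounded module maps and the target of $\phi$ being the self-dual module $F$
itself.

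For the ``in particular'' clause, I would recall from Proposition~\ref{Probert} that a
finitely generated free Hilbert $A$-module is self-dual, so that any finitely generated
free $F$ satisfies the hypothesis of the proposition. Given any Hilbert $A$-module $H$
and any bounded module map $T\colon F\to H$ from a finitely generated free module $F$,
the factorization $T=\psi\circ\phi$ with $\phi\colon F\to F$ and $\psi\colon F\to H$ and
$\phi|_{{\rm ker}\,T}=0$ is exactly the flatness condition of Definition~\ref{Dflat}
(taking $G=F$, which is free). Hence every Hilbert $A$-module is flat.

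The main obstacle I anticipate is justifying that ${\rm ker}\,T$ is orthogonally
complemented in the self-dual module $F$, i.e. that the decomposition
$F=H_0\oplus F_1$ genuinely exists with the range projection onto $H_0$ lying in $L(F)$.
For a general Hilbert module a closed submodule need not be complemented, so the argument
must use self-duality in an essential way: the point is that $B(F)=L(F)=QM(K(F))$ is a
$W^*$-algebra when $A$ is a $W^*$-algebra, and more generally that on a self-dual module
the relevant projections exist because every bounded module map is adjointable and closed
submodules of the form ${\rm ker}\,T$ carry a well-behaved orthogonal structure. I would
expect to invoke the $W^*$-structure of $B(F^\sim)$ from Definition~\ref{DHsim}, or a
polar-decomposition argument of the type used in the proof of Theorem~\ref{1proj}, to
produce the orthogonal projection onto $H_0$ and hence the complement $F_1$.
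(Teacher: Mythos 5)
Your argument breaks down at its central step: the claim that, because $F$ is self-dual, $H_0=\ker T$ is orthogonally complemented in $F$. Self-duality does give you adjointability (that part of your proposal is correct: $T^*\in B(H,F^\sharp)=B(H,F)$, so $T\in L(F,H)$), but it does not give complemented kernels, even for $F=A$ with $A$ unital. Concretely, take $A=C([0,1])$, $F=A$, and let $T$ be multiplication by $a(t)=\max\{t-1/2,\,0\}$. Then $T=T^*\in L(F)$ and $\ker T=\{f: f|_{[1/2,1]}=0\}$, whose orthogonal complement is $\{g: g|_{[0,1/2]}=0\}$; their direct sum is contained in $\{h: h(1/2)=0\}$, a proper submodule of $F$. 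Nor can any non-orthogonal bounded idempotent module map rescue the decomposition: for unital $A$ every bounded module map $A\to A$ is left multiplication by an element of $A$, and an idempotent multiplier with range $\ker T$ would have to be the discontinuous function $\chi_{[0,1/2]}$. Your two fallback suggestions fail for the same reason: the projection onto the kernel does exist in the $W^*$-algebra $B(F^\sim)$ of Definition \ref{DHsim}, but it maps $F$ into $F^\sim$, not into $F$; and the polar-decomposition argument in Theorem \ref{1proj} yields a complemented kernel only because the map there is surjective, which forces $0$ to be isolated in the spectrum of $|T|$ (or $|T|$ invertible) --- for an arbitrary $T$ with non-closed range, as here, no such projection is available.

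The statement, however, never asks $\phi$ to be a projection; it only asks $\phi|_{\ker T}=0$ and $\psi\circ\phi=T$, and the paper's proof exploits exactly this slack. Having used self-duality to place $T$ in $L(F,H)$, embed $T$ as $S(f\oplus h)=0\oplus T(f)$ in $L(F\oplus H)$ and take the polar decomposition $S=V|S|$ in the enveloping von Neumann algebra. Then $|T|^{1/2}\in L(F)$ and $V|T|^{1/2}\in L(F,H)$ (for instance $V|S|^{1/2}=\lim_n S(|S|+1/n)^{-1/2}$ in norm), and one sets $\phi=|T|^{1/2}$, $\psi=V|T|^{1/2}$. This gives $\psi\circ\phi=V|T|=T$, and $\phi$ vanishes on $\ker T$ because $Tx=0$ implies $\langle x,|T|^2x\rangle=\langle Tx,Tx\rangle=0$, hence $|T|x=0$ and then $\langle |T|^{1/2}x,|T|^{1/2}x\rangle=\langle x,|T|x\rangle=0$. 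Note that $\phi$ here is typically not idempotent and its range is not a complement of the kernel --- which is precisely why the argument survives the counterexample above. Your final paragraph (finitely generated free modules are self-dual by Proposition \ref{Probert}, take $G=F$ in Definition \ref{Dflat}) is fine once the factorization is produced this way.
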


\begin{proof}
Since $F$ is self-dual, 
 Therefore $T^*$ maps $H$ into $F$ (instead  into $F^\sharp$), or
 $T^*\in B(H, F).$  In other words, $T\in L(F, H).$
 Put $H_1=F\oplus H.$ Define $S\in B(H_1)$ by
 $$S(f\oplus h)=0\oplus T(f)\rforal \in F\andeqn h\in H.$$ Then $S^*\in B(H_1).$
 Let $S=V|S|$ be the polar decomposition of $S$ in $L(H)^{**}.$ 
 Then $V|S|^{1/2}, |S|^{1/2}\in L(H_1).$ 
 In other words, $|T|^{1/2}\in L(F)$ and $V|T|^{1/2}\in L(F, H).$ 
 
 Now define $\phi: F\to F$ by $\phi=|T|^{1/2}$ and $\psi: F\to H$ by $\psi=V|T|^{1/2}.$
 Then $\phi|_{{\rm ker} T}=0$ and $\psi\circ \phi=V|T|=T.$ 
 
 If $F$ is a finitely generated free Hilbert $A$-module, then $F$ is self-dual. So the above applies.
 Consequently $H$ is flat. 

\end{proof}

\section{Sequential approximation}
%
Let $A$ be a \CA\, and $H_0\subset H$ be  Hilbert $A$-modules. Suppose that 
$\phi: H_0\to A$ is a bounded module map.  Consider the Hahn-Banach type of extension 
question:
 whether there 
is a bounded module map $\psi: H\to A$ such that $\psi|_{H_0}=\phi$ (and 
$\|\psi\|=\|\phi\|$).  If $H_0\subset H$ is an arbitrary pair of Hilbert $A$-modules.
This is to ask whether $A$ is an injective Hilbert $A$-module with bounded module maps
as morphisms.
By Theorem 3.8 of \cite{Lninj}, if $A$ is a monotone complete \CA, then 
$A$ is an injective Hilbert $A$-module. Conversely, 
if $A$ is  an injective Hilbert $A$-module, then $A$ must be an $AW^*$-algebra
(see Theorem 3.14 of \cite{Lninj}).    In fact, injective Hilbert $A$-modules are rare. It may never 
happen when  $A$ is a separable but infinite dimensional  \CA\, (see \cite{Lninj} for some further discussion).

In what follows, when $x, y\in H$ and $\ep>0,$  we may write $x\approx_\ep y$
if $\|x-y\|<\ep.$

Let $H$ be a Hilbert $A$-module and $H^\sharp=B(H, A),$ the Banach $A$-module 
of all bounded $A$-module maps from $H$ to $A.$  Recall that  $H^\sharp\not=H$ in general (see 
the end of \ref{Hild}).
However, let us include the following approximation result.

\begin{thm}\label{Lapp-1}
Let $H$ be a Hilbert $A$-module and $\phi: H\to A$ be a bounded module map.
 Then, for any finite subset ${\cal F}\subset H,$ and any $\ep>0,$  there exists $x\in H$ such that
 \beq
 \|\phi(\xi)-\la x, \xi\ra\|<\ep\tforal \xi\in {\cal F}\andeqn \|x\|\le \|\phi\|.
 \eneq
  \end{thm}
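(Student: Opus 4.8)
The plan is to exploit the fact that, although $\phi$ itself need not be representable by an element of $H$ (recall $H^\sharp\neq H$ in general, as noted at the end of \ref{Hild}), its compositions with \emph{compact} module maps always are, and that this representability comes with automatic norm control. First I would record the identification $K(H,A)=\{\hat{x}:x\in H\}$, where $A$ is viewed as a Hilbert $A$-module over itself and $\hat{x}(\eta)=\la x,\eta\ra$. Indeed each rank-one map $\theta_{a,\xi}$ (with $a\in A$, $\xi\in H$) satisfies $\theta_{a,\xi}=\widehat{\xi a^*}$, the canonical image $\{\hat{x}:x\in H\}$ is norm-closed (the map $x\mapsto\hat{x}$ is isometric, since $\|\hat{x}\|=\sup_{\|\eta\|\le 1}\|\la x,\eta\ra\|=\|x\|$, and $H$ is complete), so it contains the closed linear span $K(H,A)$ of these rank-one maps.

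The key computation is that $\phi\circ\theta_{u,v}=\theta_{\phi(u),v}$ for $u,v\in H$: one has $\phi(\theta_{u,v}(\eta))=\phi(u)\la v,\eta\ra=\widehat{v\,\phi(u)^*}(\eta)$. Hence $\phi\circ\theta_{u,v}\in K(H,A)$, and since $k\mapsto\phi\circ k$ is continuous with $\|\phi\circ k\|\le\|\phi\|\,\|k\|$, it follows that $\phi\circ k\in K(H,A)$ for every $k\in K(H)$. Combining this with the previous paragraph, each such $\phi\circ k$ equals $\hat{x}$ for a unique $x\in H$ with $\|x\|=\|\phi\circ k\|\le\|\phi\|\,\|k\|$.

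Next I would bring in an approximate identity. Since $K(H)$ is a \CA\ it admits an approximate identity $\{E_\lambda\}$ with $0\le E_\lambda\le 1$ (a net, which is all this finite-subset statement requires). For each $\lambda$ let $x_\lambda\in H$ be the element with $\hat{x_\lambda}=\phi\circ E_\lambda$; then the required bound $\|x_\lambda\|\le\|\phi\|\,\|E_\lambda\|\le\|\phi\|$ holds automatically. Finally, Lemma \ref{Lappunit} supplies $\lambda_0$ such that $\|E_\lambda\xi-\xi\|<\ep/(\|\phi\|+1)$ for all $\xi\in{\cal F}$ and $\lambda\ge\lambda_0$; since $\la x_\lambda,\xi\ra=\hat{x_\lambda}(\xi)=\phi(E_\lambda\xi)$, this gives $\|\la x_\lambda,\xi\ra-\phi(\xi)\|=\|\phi(E_\lambda\xi-\xi)\|<\ep$, so $x=x_{\lambda_0}$ is the desired element.

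The main (essentially the only conceptual) point is the identification $K(H,A)=\{\hat{x}:x\in H\}$ together with the observation that composing $\phi$ with a compact map returns it to this representable class; everything else is routine. The sharp norm control $\|x\|\le\|\phi\|$ then drops out for free from $\|E_\lambda\|\le 1$, which is precisely why I would favor this approximate-identity route over passing through the self-dual completion $H^\sim$ of \ref{DHsim}, where recovering the exact bound would require additional work.
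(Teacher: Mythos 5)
Your proof is correct, and it takes a genuinely different route to the key representability step. The paper works in the doubled module $H_1=H\oplus A$, defines $T=\phi\circ P_1$, and invokes $B(H_1)=LM(K(H_1))$ (Theorem 1.5 of \cite{Lnbd}) to conclude that $TE_\lambda P_1\in K(H_1)$; it then approximates this compact map by a finite-rank map $\sum_j\theta_{y_j,z_j}$, performs an explicit rescaling by $\bt=\|TE_\lambda P_1\|/(\|TE_\lambda P_1\|+\ep/8)$ to keep the norm below $\|\phi\|$, and finally reads off $x=\sum_j z_ja_j$ from the coefficients. You bypass the doubling, the left-multiplier theorem, and the finite-rank/rescaling step entirely: your identity $\phi\circ\theta_{u,v}=\theta_{\phi(u),v}=\widehat{v\,\phi(u)^*}$, together with the isometric identification of $\{\hat{x}:x\in H\}$ as a closed subspace containing $K(H,A)$, shows directly that $\phi\circ E_\lambda$ is \emph{exactly} representable as $\hat{x}_\lambda$ with $\|x_\lambda\|=\|\phi\circ E_\lambda\|\le\|\phi\|$, so the sharp norm bound is automatic rather than recovered by normalization. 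The two arguments share the same skeleton — compress $\phi$ by an approximate identity of $K(H)$ and control the error via Lemma \ref{Lappunit} — but yours is more elementary and cleaner on the norm estimate, while the paper's $LM(K(\cdot))$ machinery is the same apparatus it reuses throughout (e.g.\ in Theorems \ref{Tapproxdual} and \ref{Tapp-extension}); note that your argument adapts equally well to the sequential statement of Theorem \ref{Tapproxdual} by taking a sequential approximate identity $\{E_n\}$ when $H$ is countably generated, with $x_n$ defined by $\hat{x}_n=\phi\circ E_n$.
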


\begin{proof}
We may assume that $\|\phi\|\not=0.$
Let  $H_1=H\oplus A$ and $P_1, P_A\in L(H_1)$  be the projections from $H_1$  onto $H$ and onto $A,$ 
respectively.
Define $T\in B(H_1)$ by $T=\phi\circ P_1.$ 
Note that, for $x\in H,$ $T(x)=\phi(x)$ and $\|T\|=\|\phi\|.$

Let ${\cal F}=\{\xi_1, \xi_2,...,\xi_k\}\subset H$ and $\ep>0.$
Without loss of generality, we may assume that $0\not=\|\xi_i\|\le 1,$ $1\le i\le k.$
Let $\{E_\lambda\}$ be an approximate identity for $K(H).$ 
By Lemma \ref{Lappunit}, there exists $\lambda_0$ such that, for all $\lambda\ge \lambda_0,$ 
\beq\label{Lapp-1n-1}
\|E_{\lambda}(x)-x\|<\ep/4(\|\phi\|+1)\rforal x\in {\cal F}
\eneq

Fix $\lambda\ge \lambda_0.$
Applying Theorem 1.5 of \cite{Lnbd}, $B(H_1)=LM(K(H_1)).$ It follows that $TE_\lambda P_1\in K(H_1).$
Note also that $\|TE_\lambda P_1\|\not=0.$ 
Therefore, there are $y_1', y_2',...,y_m', z_1,z_2,...,z_m\in H_1$ such that
\beq\nonumber
\|TE_\lambda P_1-\sum_{j=1}^m \theta_{y_j', z_j}\|<\ep/8,\,\,1\le j\le m.
\eneq 
Then
\beq
\|\sum_{j=1}^m \theta_{y_j',z_j}\|\le \|TE_\lambda P_1\|+\ep/8.
\eneq
Put $\bt={\|TE_\lambda P_1\|\over{\|TE_\lambda P_1\|+\ep/8}}$ and $y_j=\bt y_j',$ 
$1\le j\le m.$
Then
\beq\label{16-10}
\sum_{j=1}^m \theta_{y_j, z_j}=\bt \sum_{j=1}^m \theta_{y_j', z_j}\andeqn
 \|\sum_{j=1}^m \theta_{y_j, z_j}\|\le 
\|TE_\lambda P_1\|\le \|\phi\|.
\eneq
 Moreover,
\beq
\|TE_\lambda P_1-\sum_{j=1}^m \theta_{y_j, z_j}\|\le \|TE_\lambda P_1-\sum_{j=1}^m \theta_{y_j',z_j'}\|+(1-\bt)\|\sum_{j=1}^m \theta_{y_j',z_j'}\|\\\label{Lapp-1-5}
<\ep/8+(1-\bt)(\|TE_\lambda P_1\|+\ep/8)=\ep/8+\ep/8=\ep/4.
\eneq
Denote $L=\sum_{j=1}^m \theta_{y_j, z_j}.$ Then, by \eqref{16-10}, $\|L\|\le \|\phi\|.$ 
Note that  $P_1\in L(H_1).$ Therefore, for all $z\in H,$  
$\theta_{y_j, z_j}P_1(z)=y_j\la z_j, P_1z\ra=y_j\la P_1z_j, z\ra=\theta_{y_j, P_1z_j}(z),$ $1\le j\le m.$
By replacing $z_j$ by $P_1z_j,$ we may assume that $z_j\in H.$ 
Recall $P_ATE_\lambda P_1=TE_\lambda P_1.$ Hence $P_Ay_j\in A,$ $1\le j\le m.$ 
Put $a_j=(P_1y_j)^*,$ $1\le j\le m.$  
Then, by \eqref{Lapp-1n-1}, \eqref{Lapp-1-5},
\beq
\phi(\xi_i)=T(\xi_i)\approx_{\ep/4} TE_\lambda P_1(\xi_i)\approx_{\ep/4} L(\xi_i)=\sum_{j=1}^m a_j^*\la z_j, \xi_i\ra.
\eneq

Choose $x=\sum_{j=1}^m z_ja_j.$ Then $L(\xi_i)=\la x, \xi_i\ra,$ $i\in \N.$ Hence
\beq
\|\phi(\xi_i)-\la x, \xi_i\ra\|<\ep,\,\,\,\, i=1,2,...,k.
\eneq
It remains to show that $\|x\|\le \|\phi\|.$
However, we have 
\beq
\|x\|=\|\la x, x/\|x\|\ra\|=\|L(x/\|x\|)\|\le \|L\|\le \|\phi\|.
\eneq

\end{proof}

\begin{rem}\label{remaweakdense}
Theorem \ref{Lapp-1} also shows that 
$H$ may not be $A$-weakly closed, in the case that $H\not=H^\sharp$ (see \ref{Dwdense}). 
\end{rem}

In the following statement, note that, $H$ may  not be  a separable Banach space when $A$ is not separable.  

\begin{thm}\label{Tapproxdual}
Let $H$ be a countably generated Hilbert $A$-module and $\phi: H\to A$ be a bounded module map.
 Then,  there exists $x_n\in H,$ $n\in \N,$  such that $\|x_n\|\le \|\phi\|$ and 
 \beq
 \lim_{n\to\infty}\|\phi(\xi)-\la x_n, \xi\ra\|=0\tforal \xi\in H.
 \eneq
\end{thm}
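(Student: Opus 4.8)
The plan is to upgrade the finite-subset approximation of Theorem \ref{Lapp-1} to a sequential approximation that works on all of $H$ simultaneously, using the countable generation of $H$ together with the uniform norm bound $\|x\|\le\|\phi\|$ already provided by Theorem \ref{Lapp-1}. The key point to exploit is that by Proposition \ref{TKHsigma} (the equivalence of countable generation with $\sigma$-unitality of $K(H)$), $H$ is generated as a Hilbert $A$-module by a countable set $\{w_1,w_2,\dots\}$, which we may take in the unit ball. I would then set up an increasing sequence of finite subsets and a shrinking sequence of tolerances, and apply Theorem \ref{Lapp-1} to each pair to produce the desired sequence $\{x_n\}$.

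First I would build an exhausting family of test vectors. Since $H$ is generated by $\{w_i\}$, the set of finite linear combinations of elements of the form $w_i a$ with $a\in A$ is dense in $H$. Let ${\cal G}=\{w_i a: i\in\N,\ a\in A,\ \|a\|\le m,\ i\le m,\ m\in\N\}$ enumerated as a countable collection of finite "levels"; more precisely, for each $n$ let ${\cal F}_n$ be the finite set $\{w_i\cdot a_{i,k}: 1\le i\le n,\ 1\le k\le n\}$, where $\{a_{i,k}\}$ runs over a suitable countable dense-enough grid so that $\bigcup_n \mathrm{span}({\cal F}_n)$ is dense in $H$. For each $n$, apply Theorem \ref{Lapp-1} with the finite subset ${\cal F}_n$ and tolerance $\ep=1/n$ to obtain $x_n\in H$ with $\|x_n\|\le\|\phi\|$ and $\|\phi(\xi)-\la x_n,\xi\ra\|<1/n$ for all $\xi\in{\cal F}_n$.

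It then remains to check that $\lim_{n\to\infty}\|\phi(\xi)-\la x_n,\xi\ra\|=0$ for \emph{every} $\xi\in H$, not just for $\xi$ in the exhausting family. This is where the uniform bound $\|x_n\|\le\|\phi\|$ is essential. Fix $\xi\in H$ and $\delta>0$. Choose $\eta$ in $\bigcup_n\mathrm{span}({\cal F}_n)$ with $\|\xi-\eta\|<\delta$; then $\eta\in\mathrm{span}({\cal F}_{n_0})$ for some $n_0$, and for $n\ge n_0$ the maps $\xi\mapsto\la x_n,\xi\ra$ and $\phi$ are both bounded with norm at most $\|\phi\|$, so
\begin{align*}
\|\phi(\xi)-\la x_n,\xi\ra\| &\le \|\phi(\xi-\eta)\|+\|\phi(\eta)-\la x_n,\eta\ra\|+\|\la x_n,\eta-\xi\ra\|\\
&\le \|\phi\|\,\delta+\|\phi(\eta)-\la x_n,\eta\ra\|+\|\phi\|\,\delta.
\end{align*}
The middle term tends to $0$ as $n\to\infty$ by linearity of both maps in $\eta$ and the estimate on the generators of ${\cal F}_{n_0}$ (taking $n$ large enough that $1/n$ times the number and norm of the coefficients in $\eta$ is small). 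Hence $\limsup_n\|\phi(\xi)-\la x_n,\xi\ra\|\le 2\|\phi\|\delta$, and letting $\delta\to 0$ finishes the proof.

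The main obstacle I anticipate is the bookkeeping in the second step: the estimate $\|\phi(\eta)-\la x_n,\eta\ra\|<1/n$ only holds for $\eta$ in the finite \emph{set} ${\cal F}_n$, whereas $\eta$ is a finite linear \emph{combination} of such elements. One must arrange the exhausting sets and the approximation so that linear combinations are controlled — either by absorbing the coefficients into the grid $\{a_{i,k}\}$ (so that $w_i\cdot a$ itself, for the relevant $a$, lies in some ${\cal F}_n$) or by tracking how the finitely many coefficients of a fixed $\eta$ interact with the tolerance $1/n$. Choosing ${\cal F}_n$ to consist of the products $w_i a$ rather than bare generators, so that each fixed $\eta=\sum_i w_i a_i$ eventually has all its summands $w_i a_i$ appearing in ${\cal F}_n$, neutralizes this difficulty, since then $\|\phi(\eta)-\la x_n,\eta\ra\|\le\sum_i\|\phi(w_i a_i)-\la x_n,w_i a_i\ra\|$ is a fixed finite sum each of whose terms is below $1/n$ for large $n$.
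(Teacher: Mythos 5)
Your overall strategy (Theorem \ref{Lapp-1} on an increasing family of finite sets, then an $\ep/3$-argument using the uniform bound $\|x_n\|\le\|\phi\|$) is viable, but the construction of the exhausting family is where the argument as written fails. Theorem \ref{Tapproxdual} does not assume $A$ separable, and the paper warns immediately before the statement that a countably generated $H$ need not be a separable Banach space. Your sets ${\cal F}_n$ consist of products $w_i a_{i,k}$ drawn from a countable ``grid'' $\{a_{i,k}\}\subset A$, so $\overline{\bigcup_n \mathrm{span}({\cal F}_n)}$ is a \emph{separable} closed subspace of $H$; when $A$ is nonseparable this can never be all of $H$, so no ``suitable countable dense-enough grid'' exists. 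The same problem defeats your closing remark that each fixed $\eta=\sum_i w_i a_i$ ``eventually has all its summands $w_i a_i$ appearing in ${\cal F}_n$'': the coefficients $a_i$ are arbitrary elements of $A$ and cannot all occur in a pre-chosen countable family. As written, you have proved the limit only for $\xi$ in a separable submodule, not for all $\xi\in H$.

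The gap is repairable, and the repair makes the grid unnecessary: take ${\cal F}_n=\{w_1,\dots,w_n\}$ (bare generators) with tolerance $1/n$ in Theorem \ref{Lapp-1}. Since $\phi$ and $\la x_n,\cdot\ra$ are $A$-module maps, $\|\phi(w_i a)-\la x_n, w_i a\ra\|\le \|\phi(w_i)-\la x_n, w_i\ra\|\,\|a\|$, so control on the generators propagates automatically to every \emph{fixed} finite sum $\eta=\sum_{i=1}^m w_i a_i$ with arbitrary $a_i\in A$ (complex scalars absorb into the $a_i$), and by Definition \ref{Dgen} such sums are norm-dense in $H$; your $\ep/3$-argument with $\|x_n\|\le\|\phi\|$ then closes the proof. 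This is still a different route from the paper's: the paper sidesteps density arguments entirely by modifying the proof of Theorem \ref{Lapp-1} itself, using the \emph{sequential} approximate identity $\{E_n\}$ for $K(H)$ furnished by Proposition \ref{TKHsigma}, which satisfies $\lim_n\|E_n(x)-x\|=0$ for \emph{every} $x\in H$ (no separability needed); it then approximates $TE_nP_1$ within $1/2^n$ by finite-rank maps $L_n$ with $\|L_n\|\le\|\phi\|$ and reads off $x_n$ from $L_n$. Both arguments lean on the same two pillars — $\sigma$-unitality of $K(H)$ and the norm bound from the finite-rank approximation — but the paper's version gets the pointwise convergence on all of $H$ directly from the approximate identity rather than from a density-plus-equicontinuity step.
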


\begin{proof}
We need to modify the proof of Theorem \ref{Lapp-1}. 
Let $\{E_n\}$ be an approximate identity for $K(H)$ (by Theorem \ref{TKHsigma}).
Let $H_1=H\oplus A,$ $P_1, $ $P_A$ and $T$ be as in the proof of Theorem \ref{Lapp-1}.
Consider $TE_nP_1\in K(H).$   As in the proof of Theorem \ref{Lapp-1},
we obtain $L_n\in K(H)$ with $\|L_n\|\le \|TE_nP_1\|\le \|\phi\|$ such that
\beq
\|TE_nP_1-L_n\|<1/2^n,
\eneq
where $L_n=\sum_{j=1}^{m(n)}\theta_{x_{n,j}, y_{n,j}}.$ 
Put $P_A(x_{n,j})=a_{n,j}^*,$ $1\le j\le m(n),$ $n\in \N.$
For any $\ep>0$ and any $z\in H,$ we may choose $n(x)$ such that, for all $n\ge n(x),$
\beq
\|E_nP_1(z)-z\|<\ep/2(\|\phi\|+1).
\eneq
Then, if $n\ge n(x),$ 
\beq
\phi(z)\approx_{\ep/2} TE_nP_1(y)\approx_{\|z\|/2^n} L_n(z)=\sum_{j=1}^{m(n)}a_{n,j}^*\la y_{n,j}, z\ra.
\eneq
Choose $x_n=\sum_{j=1}^{m(n)}y_{n,j}a_{n,j},$ $n\in \N.$ The same proof as the end of the proof 
of Theorem \ref{Lapp-1} shows that, for all $n\ge n(x),$  
\beq
\phi(z)\approx_{\ep/2+\|z\|/2^n} L(z)=\la x_n, z\ra\andeqn \|x_n\|\le \|\phi\|.
\eneq
The theorem then follows.
\end{proof}

As mentioned at the beginning of this section, injective Hilbert $A$-modules are rare. 
However, one may have some approximate extensions of bounded module maps. 
Let $B_0, B_1$ and $C$  be \CA s and $\phi: B_0\to C$ be a contractive completely  
positive linear map. If $C$ is amenable (or $B_0$ is), then, for any $\ep>0$ and any finite 
subset ${\cal F}\subset B_0,$ there exists a contractive completely positive linear 
map $\psi: B\to C$ such that
$$
\|\phi(b)-\psi(b)\|<\ep\tforal b\in {\cal F}.
$$
It is very useful  feature of amenable \CA s. With the same spirit, let us present the following
approximate extension result: 

\begin{thm}\label{Tapp-extension}
Let $A$ be a \CA, $H_0, H_1, H$ be  Hilbert $A$-modules 
such that $H_0\subset H_1$ and 
$\phi: H_0\to H$ be a bounded module map.

Then, for any $\ep>0$ and any finite subset ${\cal F}\subset H_0,$ there exists 
a bounded module map $\psi: H_1\to H$ such that $\|\psi\|\le \|\phi\|$ and
\beq
\|\psi(x)-\phi(x)\|<\ep\tforal x\in {\cal F}.
\eneq
The above may be illustrated as follows: 
                       
\begin{equation}
\begin{tikzcd}
H_1 \ar[dr, dashed, "\psi"]\\
H_0 \ar[u, hookrightarrow, "\ \, \circlearrowleft_{\varepsilon}"']  \ar[r, "\varphi"]& H
\end{tikzcd}   \hspace{0.4in} on\,\,{\cal F}.
\end{equation} 

If, in addition, $H_0$ is countably generated, then there exists  $\psi_n: H_1\to H,$ $n\in \N,$ such that
$\|\psi_n\|\le \|\phi\|$ and 
\beq
\lim_{n\to\infty}\|\psi_n(x)-\phi(x)\|=0\rforal x\in H_0.
\eneq

\end{thm}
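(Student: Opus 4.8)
The plan is to reduce both statements to a single construction: truncating $\phi$ by an approximate unit of $K(H_0)$ and observing that these truncations extend automatically to $H_1.$ The key point is that, since $H_0$ is a (closed) Hilbert $A$-submodule of $H_1,$ the algebra $K(H_0)$ sits inside $K(H_1)$ as a hereditary \SCA\ (Lemma 2.13 of \cite{Lninj}), and each element of $K(H_0),$ regarded as an operator on $H_1,$ carries $H_1$ into $H_0.$ Indeed, for $u,v\in H_0$ the rank-one map $\theta_{u,v}$ sends $x\in H_1$ to $u\la v,x\ra\in u\cdot A\subset H_0$; passing to norm limits of finite linear combinations, every $b\in K(H_0)\subset K(H_1)$ satisfies $b(H_1)\subset H_0,$ while its operator norm on $H_1$ equals its norm $\|b\|_{K(H_0)}$ in $K(H_0)$ because the inclusion $K(H_0)\hookrightarrow K(H_1)$ is isometric.

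Granting this, here is the construction. Let $\{E_\lambda\}$ be an approximate identity for $K(H_0)$ with $\|E_\lambda\|\le 1.$ For each $\lambda,$ view $E_\lambda$ as an operator on $H_1$; by the previous paragraph $E_\lambda(H_1)\subset H_0,$ so $\psi_\lambda:=\phi\circ(E_\lambda|_{H_1})$ is a well-defined bounded module map $H_1\to H.$ For $x\in H_1$ with $\|x\|\le 1$ we get $\|\psi_\lambda(x)\|=\|\phi(E_\lambda x)\|\le \|\phi\|\,\|E_\lambda\|\le \|\phi\|,$ so $\|\psi_\lambda\|\le\|\phi\|$ with no further work; this disposes of the norm bound, which a priori looked like the main obstacle. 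For the approximation on a finite set ${\cal F}\subset H_0$ I would invoke Lemma \ref{Lappunit} (applied to the module $H_0$) to choose $\lambda$ so large that $\|E_\lambda\xi-\xi\|<\ep/(\|\phi\|+1)$ for all $\xi\in{\cal F}$; then, since $E_\lambda$ agrees on $H_0$ with its action there, $\|\psi_\lambda(\xi)-\phi(\xi)\|=\|\phi(E_\lambda\xi-\xi)\|\le\|\phi\|\,\|E_\lambda\xi-\xi\|<\ep.$ Setting $\psi=\psi_\lambda$ proves the finite-subset statement (the case $\|\phi\|=0$ being trivial with $\psi=0$).

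For the sequential statement, assume $H_0$ is countably generated. Then by Proposition \ref{TKHsigma} there is an increasing sequence $E_n\in K(H_0)_+$ with $\|E_n\|\le 1$ and $\lim_{n\to\infty}\|E_nx-x\|=0$ for every $x\in H_0.$ Defining $\psi_n:=\phi\circ(E_n|_{H_1})$ exactly as above yields bounded module maps $H_1\to H$ with $\|\psi_n\|\le\|\phi\|,$ and for each fixed $x\in H_0,$
\[
\|\psi_n(x)-\phi(x)\|=\|\phi(E_nx-x)\|\le\|\phi\|\,\|E_nx-x\|\longrightarrow 0,
\]
which is the desired conclusion; note that no separability or countable density of $H_0$ is used beyond the existence of the sequence $\{E_n\}$ supplied by Proposition \ref{TKHsigma}. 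The only point requiring real care is the claim that $K(H_0)\subset K(H_1)$ acts on $H_1$ with range in $H_0$ and with unchanged norm; everything else is a one-line estimate.
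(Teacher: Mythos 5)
Your proof is correct, and it reaches the same destination as the paper's proof by the same underlying idea (truncate $\phi$ by an approximate identity of $K(H_0)$ and extend the truncation past $H_0$), but the implementation is genuinely leaner. The paper works inside the doubled modules $H_2=H_0\oplus H\subset H_3=H_1\oplus H$: it forms $T=\phi\circ P_0\in B(H_2)$, invokes $B(H_2)=LM(K(H_2))$ (Theorem 1.5 of \cite{Lnbd}) to conclude $TE_\lambda P_0\in K(H_2)$, then uses the hereditary embedding $K(H_2)\subset K(H_3)$ (Lemma 2.13 of \cite{Lninj}) to extend this composite to $T_\lambda\in K(H_3)$ with the same norm, and finally cuts by $P_H$ to get $\psi$. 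You instead extend only $E_\lambda$ itself, via the hereditary inclusion $K(H_0)\subset K(H_1)$ (the same Lemma 2.13), observing that every element of $K(H_0)$ acting on $H_1$ has range in $H_0$ and unchanged norm, and then compose with $\phi$ afterwards; this eliminates the direct sums, the left-multiplier theorem, and the extension of an operator carrying $\phi$'s data, and it makes the bound $\|\psi\|\le\|\phi\|$ a one-line consequence of isometry of the inclusion rather than an outcome of the norm-preserving extension. The one point you flagged as needing care is exactly the point that needs care: that the map $K(H_0)\to K(H_1)$ is well defined, isometric, and restricts correctly on $H_0$ — and your verification (range of $\theta_{u,v}$ lies in $\overline{uA}\subset H_0$, pass to norm limits, injective $*$-homomorphisms are isometric) is sound, and in any case is covered by the cited Lemma 2.13 of \cite{Lninj}. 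Your use of Lemma \ref{Lappunit} for the finite-set statement and of Proposition \ref{TKHsigma} for the countably generated case matches the paper's. The trade-off is that the paper's heavier template (extend the full composite inside $K(H_1\oplus H)$) is the one it reuses for Theorem \ref{Lapp-1}, where the target really must sit inside the ambient module; your route is the shorter and more transparent argument for Theorem \ref{Tapp-extension} itself.
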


\begin{proof}
We may assume that $\|\phi\|\not=0.$
Let  $H_2=H_0\oplus H$  and $H_3=H_1\oplus H.$
We view $H_2$ as a Hilbert $A$-submodule of $H_3.$
Let $P_0\in L(H_2)$ be the projection from $H_2$ onto $H_0,$ and 
let  $P_1, P_H\in L(H_3)$  be the projections from $H_3$ 
onto $H_1$ and onto $H,$ 
respectively.
Define $T\in B(H_2)$ by $T=\phi\circ P_0.$ 
Let ${\cal F}\subset H_0$ be a finite subset 
and $\ep>0.$

Let $\{E_\lambda\}$ be an approximate identity for $K(H_0).$ 
By Lemma \ref{Lappunit}, there exists $\lambda_0$ such that, for all $\lambda\ge \lambda_0,$ 
\beq\label{Lapp-1-1}
\|E_{\lambda}(x)-x\|<\ep/(\|\phi\|+1)\rforal x\in {\cal F}.
\eneq
Applying Theorem 1.5 of \cite{Lnbd},  we have that $B(H_2)=LM(K(H_2)).$ It follows that  (for a fixed $\lambda\ge \lambda_0$)
$TE_{\lambda}P_0\in K(H_2).$
Applying Lemma 2.13 of \cite{Lninj}, we obtain  $T_\lambda\in K(H_3)$  such that
 \beq
 \|T_\lambda\|=\|TE_\lambda P_0\|\andeqn T_\lambda|_{H_2}=TE_\lambda P_0.
\eneq
Put $\psi=P_H T_\lambda.$ Note that  (recall that $T(H_2)\subset H$)
\beq\label{Lapp-1-2}
\|\psi\|=\|TE_\lambda P_0\|\andeqn \psi |_{H_2}=TE_\lambda P_0.
\eneq
We may view $\psi$ as a bounded module map from $H_1$ to $H$ with $\|\psi\|\le \|T\|=\|\phi\|.$
We estimate that, for $x\in {\cal F},$   by \eqref{Lapp-1-1} and \eqref{Lapp-1-2}, 
\beq\nonumber
\hspace{-0.3in}\|\psi(x)-\phi(x)\| &=&\|\psi(x)-T(x)\|=\|\psi(x)-TE_\lambda P_0(x)\|+\|TE_\lambda P_0(x)-T(x)\|\\
&<& 0+\|T\|\|E_\lambda(x)-x\|<\ep.
\eneq
This proves the first part of the theorem.

For the second part of the theorem, by Theorem \ref{TKHsigma},  let $\{E_n\}$ be an approximate identity for $K(H_0)$ with 
$E_{n+1}E_n=E_n,$ $n\in \N.$   Replacing $E_\lambda$ by $E_n,$ we obtain $T_n\in K(H_3)$
such that
\beq
\|T_n\|=\|TE_nP_0\|\andeqn T_n|_{H_2}=TE_nP_0.
\eneq 
Put $\psi_n=P_H T_n,$ $n\in \N.$ Then 
\beq
\|\psi_n\|\le \|\phi\|\andeqn \psi_n|_{H_2}=TE_nP_0,\,\, n\in \N.
\eneq
We then use the fact that $\lim_{n\to\infty} \|E_n(x)-x\|=0$ for all $x\in H_0$ (see again Theorem \ref{TKHsigma}).

\end{proof}

\begin{cor}
Let $A$ be a \CA\, and $H_0\subset H$ be Hilbert $A$-modules.
Suppose that there is a bounded module map $\phi: H_0\to A.$ Then, 
for any $\ep>0$ and any finite subset ${\cal F}\subset H_0,$ there exists a 
bounded module map $\psi: H\to A$ such that
\beq
\|\phi(x)-\psi(x)\|<\ep\tforal x\in {\cal F} \tand\|\psi\|\le \|\phi\|.
\eneq

If, in addition, $H_0$ is countably  generated Hilbert $A$-module, then there exists 
a sequence of bounded module maps $\psi_n: H\to A$ such that
\beq
\lim_{n\to\infty}\|\psi_n(x)-\phi(x)\|=0\tforal x\in H \tand \|\psi_n\|\le \|\phi\| \tforal n\in \N.
\eneq
\end{cor}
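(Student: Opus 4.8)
The plan is to recognize that this corollary is simply the special case of Theorem \ref{Tapp-extension} in which the target Hilbert $A$-module is $A$ itself. First I would recall that every \CA\, $A$ is a (right) Hilbert $A$-module over itself: this is the module $A^{(1)}$ of Definition \ref{Hild}, with inner product $\la a, b\ra = a^*b$. Consequently a bounded $A$-module map $\phi: H_0\to A$ is nothing other than a bounded module map from $H_0$ into the Hilbert $A$-module $A$, and the quantity $\|\phi\|$ appearing in the statement is its module-map norm in the sense of Definition \ref{Hild}.

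With this identification in hand, I would invoke Theorem \ref{Tapp-extension} with the following assignment of roles: the pair $H_0\subset H$ of the corollary plays the role of the pair $H_0\subset H_1$ in the theorem, and the target module (denoted $H$ in the theorem) is taken to be $A$. The hypotheses of the theorem are then exactly the hypotheses of the corollary, so its first conclusion applies verbatim: for any $\ep>0$ and any finite subset ${\cal F}\subset H_0$, there is a bounded module map $\psi: H\to A$ with $\|\psi\|\le \|\phi\|$ and $\|\psi(x)-\phi(x)\|<\ep$ for all $x\in {\cal F}$. This gives the first assertion with no further work.

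For the second assertion, assuming $H_0$ is countably generated, I would apply the corresponding (second) conclusion of Theorem \ref{Tapp-extension}, again with target $A$. This yields a sequence of bounded module maps $\psi_n: H\to A$ with $\|\psi_n\|\le \|\phi\|$ and $\lim_{n\to\infty}\|\psi_n(x)-\phi(x)\|=0$ for every $x\in H_0$, i.e.\ for every $x$ in the domain of $\phi$ on which the expression $\phi(x)$ is meaningful. (The displayed ``$\tforal x\in H$'' should be read as $\tforal x\in H_0$, since $\phi$ is only defined on $H_0$.)

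There is essentially no obstacle here: the entire content of the corollary is transported from the theorem once one observes that $A$ qualifies as a Hilbert $A$-module and that a module map into $A$ is the same datum as an element of $H_0^{\sharp}$. The only point worth stating explicitly is this identification of $A$ with $A^{(1)}$, which makes Theorem \ref{Tapp-extension} directly applicable; everything else is a matter of matching notation.
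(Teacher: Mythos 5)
Your proof is correct: identifying $A$ with the Hilbert $A$-module $A^{(1)}$ of Definition \ref{Hild}, the corollary is indeed the special case of Theorem \ref{Tapp-extension} in which the target module is $A$ and the ambient pair is $H_0\subset H$; both conclusions, including the bound $\|\psi\|\le\|\phi\|$ and the countably generated case, transfer verbatim, and you are right that the displayed quantifier should read $x\in H_0$ rather than $x\in H.$ The paper, however, takes a genuinely different route: instead of the extension theorem it uses the approximate self-duality results. By Theorem \ref{Lapp-1}, one finds $\xi\in H_0$ with $\|\xi\|\le\|\phi\|$ such that $\|\la \xi,x\ra-\phi(x)\|<\ep$ for all $x\in{\cal F},$ and then simply defines $\psi(z)=\la \xi,z\ra$ for all $z\in H$; the extension from $H_0$ to $H$ is free, because the inner product against a fixed vector of $H_0$ is defined on all of $H,$ and $\|\psi\|\le\|\xi\|\le\|\phi\|.$ The sequential statement follows in the same way from Theorem \ref{Tapproxdual}, taking $\psi_n=\la x_n,\cdot\ra$ with $x_n\in H_0,$ $\|x_n\|\le\|\phi\|.$ The trade-off: your argument is immediate once Theorem \ref{Tapp-extension} is available, but it silently routes through that theorem's machinery ($B(H_2)=LM(K(H_2)),$ approximate identities for $K(H_0),$ and the extension Lemma 2.13 of \cite{Lninj}); the paper's argument exploits what is special about the target $A$ --- bounded module maps into $A$ are approximately inner --- so the extensions come out in the explicit form $\hat{\xi}$ with $\xi\in H_0,$ with no extension lemma needed at all. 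Both are valid, and the norm control is identical.
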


\begin{rem}
In the light of Theorem \ref{Tapp-extension},  
one may think that every
Hilbert $A$-module is ``approximately injective". 
Next, let us  turn to ``approximate projectivity". 
\end{rem}

\begin{lem}\label{appprojL}
Let $A$ be a $\sigma$-unital \CA\, and $H=A^{(k)}$ (for some $k\in \N$).
Suppose that $H_1, H_2$ are Hilbert $A$-modules, $\phi: H\to H_1$ is a bounded 
module map and $s: H_2\to H_1$ is a surjective bounded module map.
Then there exists a sequence of bounded module maps 
$\psi_n: H\to H_2$   and an increasing sequence 
of Hilbert $A$-submodules $X_n\subset H$ 
such that $\overline{\cup_{n=1}^\infty X_n}=H,$ 
\beq
{s\circ \psi_{m}}|_{H_n}=\phi|_{H_n}\,\,\tforal  m\ge n.
\eneq
Moreover there exists a  sequence of module maps $T_n: H\to X_n$
such that $\|T_n\|\le 1$ and 
\beq
\lim_{n\to\infty}\|T_n(h)-h\|=0,\, T_{2n+j}|_{H_n}={\rm id}_{H_n},\,\, n,j\in \N,\\
\andeqn s\circ \phi_n(h)=\phi(T_n(h))\tforal h\in H.
\eneq
In particular,
\beq
\lim_{n\to\infty}\|s\circ \psi_n(x)-\phi(x)\|=0\tforal x\in H.
\eneq
\end{lem}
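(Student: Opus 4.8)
The plan is to exploit the fact that, even though $H=A^{(k)}$ need not be projective when $A$ is non-unital, it admits a natural exhaustion by submodules on which $\phi$ can be lifted through $s$ \emph{exactly}, using a sequential approximate identity of $A$ in the role of the missing unit. The key point is that the vectors built from an approximate identity genuinely lie in $H=A^{(k)}$, so the rank-one lifting trick behind Proposition \ref{FHP} can be carried out coordinate-wise along a sequence.

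First I would fix an increasing sequential approximate identity $\{u_n\}$ for $A$ with $0\le u_n\le 1$ and $u_{n+1}u_n=u_n$ (available since $A$ is $\sigma$-unital); this yields $u_nu_m=u_m$ for $n\ge m$. Writing elements of $H=A^{(k)}$ as $k$-tuples, let $e_i^{(n)}\in H$ be the vector with $u_n$ in the $i$-th coordinate and zero elsewhere, so that $\la e_i^{(n)},\oplus_j a_j\ra=u_na_i$. Put $x_i^{(n)}=\phi(e_i^{(n)})\in H_1$ and, using surjectivity of $s,$ choose $y_i^{(n)}\in H_2$ with $s(y_i^{(n)})=x_i^{(n)}.$ I would then define the lift by the finite-rank formula $\psi_n=\sum_{i=1}^k\theta_{y_i^{(n)},e_i^{(n)}},$ which is a bounded module map $H\to H_2.$

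Next I would introduce the cut-down maps $T_n\colon H\to H,$ $T_n(\oplus_i a_i)=\oplus_i u_n^2a_i$ (multiplication by ${\rm diag}(u_n^2)$). These satisfy $\|T_n\|\le 1$ since $u_n^4\le 1,$ and $\|T_n(h)-h\|\to 0$ for every $h\in H$ since $u_n^2$ is again an approximate identity. The central computation is the exact identity $s\circ\psi_n=\phi\circ T_n$: for $h=\oplus_j a_j$ one has $s(\psi_n(h))=\sum_i s(y_i^{(n)})u_na_i=\sum_i\phi(e_i^{(n)})\cdot(u_na_i),$ and since $\phi$ is a module map and $e_i^{(n)}\cdot(u_na_i)$ is $u_n^2a_i$ in the $i$-th slot, this collapses to $\phi(\oplus_i u_n^2a_i)=\phi(T_n(h)).$ I would then set $X_n=\overline{\{\oplus_i u_na_i:a_i\in A\}}$; the relation $u_{n+1}u_n=u_n$ makes the $X_n$ increasing with dense union in $H,$ while $u_m^2u_n=u_n$ for $m\ge n$ gives $T_m|_{X_n}={\rm id}_{X_n}$ for all $m\ge n$ (in particular for $m=2n+j$). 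Since $T_n(\oplus_i a_i)=\oplus_i u_n(u_na_i)\in X_n,$ the map $T_n$ does land in $X_n;$ combining, $s\circ\psi_m|_{X_n}=\phi\circ T_m|_{X_n}=\phi|_{X_n}$ whenever $m\ge n.$

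The final pointwise statement is then immediate: $s\circ\psi_n(x)=\phi(T_n(x))$ and $\|T_n(x)-x\|\to 0$ force $\|s\circ\psi_n(x)-\phi(x)\|\le\|\phi\|\,\|T_n(x)-x\|\to 0.$ I do not expect a deep obstacle; the scheme is essentially forced once one decides to replace the unit of $A$ by an approximate identity. The only steps demanding care are bookkeeping: checking that $\psi_n$ is genuinely a module map valued in $H_2$ (handled by the $\theta$-operators), verifying the module-map manipulation underlying $s\circ\psi_n=\phi\circ T_n,$ and confirming that the single choice $X_n=\overline{\{\oplus_i u_na_i\}}$ simultaneously delivers the increasing dense exhaustion \emph{and} the exact identities $T_m|_{X_n}={\rm id}.$ The essential idea, producing exact lifts on each $X_n$ rather than merely approximate ones, is exactly what $\sigma$-unitality buys us by letting everything proceed along one sequence.
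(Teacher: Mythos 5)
Your proposal is correct in substance and follows essentially the same route as the paper: the paper likewise replaces the missing unit by a sequential approximate identity (there realized as $f_{1/n}(e)$ for a strictly positive $e\in A$), lifts $\phi$ at the approximate-identity vectors through $s$, defines the finite-rank maps $\psi_n$ by $\theta$-operators, and derives the exact identity $s\circ\psi_n=\phi\circ T_n$ together with the cut-down maps $T_n$ and the exhaustion $X_n.$ The one slip is your claim that $u_m^2u_n=u_n$ for all $m\ge n$: at $m=n$ this would say $u_n^3=u_n,$ which fails in general, so your construction yields $s\circ\psi_m|_{X_n}=\phi|_{X_n}$ only for $m>n$ rather than $m\ge n.$ This is repaired by exactly the index offset the paper builds in (it uses $f_{1/2n}$ in the definition of $\psi_n$ but $f_{1/n}$ in the definition of $X_n$): define $e_i^{(n)}$ with $u_{n+1}$ (or $u_{2n}$) in place of $u_n,$ since $u_{n+1}^2u_n=u_n,$ and everything else in your argument goes through unchanged.
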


\begin{proof}
Fix a strictly positive element $e\in A.$ 
Write $A^{(n)}=\overline{e_1A}\oplus \overline{e_2A}\oplus \cdots \oplus \overline{e_nA},$
where $e_i=e^{1/2}$ ($1\le i\le n$).
Let $f_{1/n}\in C([0, \infty))$ be defined in (4) of \ref{NN1}, $n\in \N.$

For each $n\in \N,$ define $X_n=\bigoplus_{i=1}^k \overline{f_{1/n}(e_i)A}.$ 
Note that $X_n\subset X_{n+1},$ $n\in \N,$ and $\overline{\cup_{n=1}^\infty X_n}=A^{(k)}=H.$ 

%
Let $x_{n,i}=\phi(f_{1/2n}(e_i)),$ $1\le i\le k,$ and $n\in \N.$ 
Choose $y_{m,i}\in H_2$ 
 such that $s(y_{m.i})=x_{m,i},$
$1\le i\le m$   and $m\in  \N.$
Define $\psi_n: H\to H_2$ by 
\beq\label{appprojL-5}
\psi_n(h)=\sum_{i=1}^k y_{n,i}\la f_{1/2n}(e_i), h\ra\rforal h\in H.
\eneq
It follows that $\psi_n\in K(H, H_2).$ 
If $h=\bigoplus_{i=1}^k f_{1/n}(e)a_i,$  where $a_i\in A,$ then 
\beq
\psi_n(h)=\sum_{i=1}^k y_{n,i}f_{1/n}(e)a_i.
\eneq
 Hence, 
for
$h=\bigoplus_{i=1}^k f_{1/n}(e_i)a_i,$ where  $a_i\in A$ ($1\le i\le k$), and for any $j\ge 0,$ 
\beq
s(\psi_{n+j}(\sum_{i=1}^k f_{1/n}(e_i) a_i))&=&s(\sum_{i=1}^k y_{n+j,i}f_{1/n}(e)a_i)\\
&=&\sum_{i=1}^k \phi(f_{1/2(n+j)}(e_i))f_{1/n}(e)a_i\\
&=&\sum_{i=1}^k \phi(f_{1/2(n+j)}(e_i)f_{1/n}(e))a_i\\
&=&\sum_{i=1}^k \phi(f_{1/n}(e_i))a_i=\phi(\sum_{i=1}^k f_{1/n}(e_i)a_i).
\eneq
It follows that, for any $m\ge n,$ 
\beq
s\circ \psi_m|_{H_n}=\phi|_{H_n}.
\eneq

Next consider the module map $T_n: H\to X_n$  defined by
$T_n(h)=\sum_{i=1}^k f_{1/n}(e_i)\la  f_{1/n}(e_i), h\ra$
for all $h\in H.$
In other words, $T_n\in K(H, H_2)$ and $T_n(\bigoplus_{i=1}^k a_i)=\bigoplus_{i=1}^k f_{1/n}(e)^2a_i$
for any $\bigoplus_{i=1}^k a_i\in A^{(k)}.$   Hence $\|T_n\|=1.$ 
Moreover (for any $a_i\in A$),
\beq
T_{2n+j}(\bigoplus_{i=1}^k f_{1/n}(e)a_i)&=&\bigoplus_{i=1}^k f_{1/(2n+j)}(e)^2f_{1/n}(e)a_i\\
&=& \bigoplus_{i=1}^k f_{1/n}(e)a_i.
\eneq
Hence 
\beq
T_{2n+j}|_{X_n}={\rm id}_{X_n},\, \, n,j\in \N.
\eneq
It follows,  for any $h=\bigoplus_{i=1}^k a_i,$ also by \eqref{appprojL-5},  that
\beq
s\circ \psi_{n+j}(h)&=&\sum_{i=1}^k x_{n+j, i}\la f_{1/2(n+j)}(e_i), a_i\ra\\
&=&\sum_{i=1}^k \phi(f_{1/2(n+j)}(e_j))f_{1/2(n+j)}(e)a_i)=\phi(T_{n+j}(h)).
\eneq

To see the last part of the lemma, fix  $h=\bigoplus_{i=1}^k a_i\in A^{(k)}.$ Let $\ep>0.$ 
Choose $n\in \N$ such that 
\beq\label{appprojL-10}
\|f_{1/n}(e)^2a_i-a_i\|<\ep/k(\|\phi\|+1),\,\,\, 1\le i\le k.
\eneq
Hence, for any $j\in \N,$ 
\beq
\|\phi(T_{n+j}(h))-\phi(h)\|<\ep
\eneq
It follows that, for this $h$ and for any $j\in \N,$ 
\beq\nonumber
\|s\circ \psi_{n+j}(h)-\phi(h)\|
&\le &\|s\circ \psi_{n+j}(h)-\phi(T_{n+j}(h))\|+\|\phi(T_{n+j}(h))-\phi(h)\|\\\nonumber
&=&0+\ep.
\eneq
From \eqref{appprojL-10}, we also have 
\beq
\lim_{n\to\infty}\|T_n(h)-h\|=0\rforal h\in H.
\eneq
\end{proof}

\begin{thm}\label{appprojT}
Let $A$ be a $\sigma$-unital \CA\, and $H$ a countably generated Hilbert $A$-module.
Suppose that $H_1$ and $H_2$ are Hilbert $A$-modules, and 
$\phi: H\to H_1$  is a bounded module map and $s: H_2\to H_1$ is a surjective  bounded module map.
Then there exists a sequence of bounded module maps 
$\psi_n: H\to H_2$ such that
\beq
\lim_{n\to\infty}\|s\circ \psi_n(x)-\phi(x)\|=0\tforal x\in H.
\eneq
\end{thm}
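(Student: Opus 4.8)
The plan is to reduce the general countably generated module $H$ to the standard module $H_A=l^2(A),$ for which the required sequence can be manufactured directly out of Lemma \ref{appprojL}. First I would invoke Kasparov's stabilization theorem (\cite{K}): since $A$ is $\sigma$-unital and $H$ is countably generated, $H\oplus H_A\cong H_A,$ so $H$ is an orthogonal summand of $H_A.$ Write the inclusion and retraction as bounded module maps $\iota: H\to H_A$ and $\rho: H_A\to H$ with $\rho\circ\iota=\id_H.$ Granting the statement for $H_A,$ I would apply it to $\Phi:=\phi\circ\rho: H_A\to H_1$ (with the same $s$) to obtain $\Psi_n: H_A\to H_2$ with $\lim_n\|s\circ\Psi_n(y)-\Phi(y)\|=0$ for every $y\in H_A,$ and then set $\psi_n:=\Psi_n\circ\iota.$ Evaluating at the fixed point $\iota(x)$ gives $s\circ\psi_n(x)=s\circ\Psi_n(\iota(x))\to\Phi(\iota(x))=\phi(\rho\iota(x))=\phi(x)$ for each $x\in H,$ so everything reduces to the case $H=H_A.$

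For $H=H_A,$ fix a strictly positive element $e\in A$ with $0\le e\le 1,$ and for each $n$ let $Q_n\in L(H_A)$ be the coordinate projection onto the first $n$ coordinates, so that $Q_n(H_A)=A^{(n)},$ $\|Q_n\|\le 1,$ and $Q_n(x)\to x$ for every $x\in H_A.$ Applying Lemma \ref{appprojL} to the finitely generated free module $A^{(n)}$ together with $\phi|_{A^{(n)}}$ and $s,$ I obtain for each $n$ module maps $\psi^{(n)}_m: A^{(n)}\to H_2$ and the explicit truncation maps $T^{(n)}_m(\oplus_{i=1}^n a_i)=\oplus_{i=1}^n f_{1/m}(e)^2 a_i$ satisfying the \emph{exact} identity $s\circ\psi^{(n)}_m=\phi\circ T^{(n)}_m.$ I would then set $\psi_n:=\psi^{(n)}_n\circ Q_n: H_A\to H_2$ and, viewed inside $H_A,$ $\hat T_n:=T^{(n)}_n\circ Q_n,$ which sends $\{a_i\}\mapsto\{f_{1/n}(e)^2 a_i\}_{i\le n}.$ The lemma's identity gives $s\circ\psi_n=\phi\circ\hat T_n,$ while splitting $\|\hat T_n(x)-x\|$ into the tail term $\|Q_n(x)-x\|$ and the cutoff error $\|x\cdot(f_{1/n}(e)^2-1)\|$ (the latter tending to $0$ because $\{f_{1/n}(e)\}$ is an approximate identity for $A$) shows $\hat T_n(x)\to x$ for every $x\in H_A.$ Continuity of $\phi$ then yields $s\circ\psi_n(x)=\phi(\hat T_n(x))\to\phi(x),$ as desired.

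The one genuine point to watch is that $H,$ and hence $H_A,$ need not be a separable Banach space when $A$ is non-separable, so a naive diagonalization that selects an index $m=m(n)$ depending on the vector being tested would fail to yield a single sequence that works at every $x.$ This is exactly why I rely on the exact relation $s\circ\psi^{(n)}_m=\phi\circ T^{(n)}_m$ of Lemma \ref{appprojL} and on the explicit form of $T^{(n)}_n$: the diagonal choice $m=n$ fuses the two independent approximations (the truncation $Q_n\to\id$ and the spectral cutoff $f_{1/n}(e)\to 1$) into the single operator $\hat T_n,$ whose convergence $\hat T_n\to\id_{H_A}$ can be checked pointwise and unconditionally, with no uniform choice over vectors. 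Everything else is routine module-map bookkeeping, and no control on $\|\psi_n\|$ is needed since the statement asserts none.
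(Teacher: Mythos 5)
Your proof is correct and follows essentially the same route as the paper's: reduce to $H_A=l^2(A)$ via Kasparov's stabilization theorem exactly as in the paper, then diagonalize Lemma \ref{appprojL} over the filtration $A^{(n)}\subset l^2(A)$ using the coordinate projections and the exact intertwining $s\circ \psi^{(n)}_m=\phi\circ T^{(n)}_m$ (your $\psi^{(n)}_n\circ Q_n$ is the paper's $\psi_{n,2n}\circ P_n$). Your direct tail-plus-cutoff check that $\hat T_n=T^{(n)}_n\circ Q_n\to \mathrm{id}_{H_A}$ pointwise is a mild streamlining of the paper's chain of estimates (which instead invokes $T_{n,2j+i}|_{X_{n,j}}=\mathrm{id}$ and the nesting $X_{n,j}\subset X_{n+1,j}$), and you correctly isolate the crux that the exact identity together with $\|\hat T_n\|\le 1$ makes any uniform bound on $\|\psi_n\|$ unnecessary.
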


The following approximately commutative diagram may illustrate  the statement of Theorem \ref{appprojT}
(with $\ep_n\to 0$):
\begin{equation}
\begin{tikzcd}
H \ar[dr, dashed, "\psi_n"]
\ar[d, 
"{\phi}_{_{_{_{ _{_{\circlearrowleft_{\varepsilon_n}}}}}}}"] \\
H_1
 &H_2 \ar[l, two heads, "s"]\,\,\,\,.
\end{tikzcd}  
\end{equation} 

\begin{proof}
We first prove the theorem for $H=H_A=l^2(A).$ 

So now we assume that $H=l^2(A).$ 
Put $Y_n=A^{(n)},$ $n\in \N.$ We  view $Y_n\subset Y_{n+1}$ as
an increasing sequence of Hilbert $A$-submodules of $H=l^2(A)$
and  the closure of the union is $H.$

By applying Lemma \ref{appprojL}, we obtain, for each $n\in \N,$ 
an increasing sequence of Hilbert $A$-submodules $X_{n,j}\subset X_{n, j+1}$
such that $\overline{\cup_{j=1}^\infty X_{n,j}}=Y_n,$ and 
there exists a sequence of bounded module maps 
$\psi_{n,j}: Y_n\to H_2$ and module maps $T_{n,j}: Y_n\to X_{n,j}$ with $\|T_{n,j}\|\le 1$ such that
\beq\label{appprojT-8}
&&{s\circ \psi_{n,j+i}}|_{X_{n,j}}=\phi|_{X_{n,j}},\,\,\,j, i\in \N, \\\label{appprojT-9}
&&T_{n, 2j+i}|_{X_{n, j}}={\rm id}|_{X_{n,j}},\,\,\, j,i\in \N,\andeqn\\\label{appprojT-10}
&&\lim_{j\to\infty}\|T_{n,j}(h)-h\|=0\rforal h\in Y_n\andeqn\\\label{appprojT-11}
&&s\circ \psi_{n,j}(h)=\phi(T_{n,j}(h))\rforal h\in Y_n\andeqn j\in \N.
\eneq
Note that $Y_n=A^{(n)}\subset A^{(n+1)}=Y_{n+1},$ $n\in \N,$ therefore we may arrange so that
\beq\label{appprojT-6}
X_{n, j}\subset X_{n+1, j}\rforal j\in \N\andeqn n\in \N.
\eneq
Define $P_n: H\to Y_n$ by $P_n(\bigoplus_{n=1}^\infty a_n)=\bigoplus_{i=1}^na_i,$
where $\sum_{n=1}^\infty a_n^*a_n$ converges, i.e, $\bigoplus_{n=1}^\infty a_n\in l^2(A)=H.$
Then $P_n$ is  a projection from $H=H_A$ onto $A^{(n)}.$

Now define 
\beq\label{appprojT-12}
\psi_n=\psi_{n,2n}\circ P_n: H\to H_2,\,\,\, n\in \N.
\eneq
We will verify that $\psi_n$ meets the requirements. 

Fix $h=\bigoplus_{n=1}^\infty a_n\in l^2(A)=H.$ 
There is $n_0\in \N$ such that, for all $n\ge m\ge n_0,$ 
\beq\label{appprojT-15}
\|P_m(h)-h\|<\ep/6(\|\phi\|+1)\andeqn \|P_n(h)-P_m(h)\|<\ep/6(\|\phi\|+1).
\eneq
For $P_{n_0}(h)\in A^{(n_0)},$ there is $n_1\in \N$ such 
that, for all $n\ge n_1$  
\beq\label{appprojT-16}
\|T_{n_0,n}(P_{n_0}(h))-P_{n_0}(h)\|<\ep/6(\|\phi\|+1).
\eneq
Since $T_{n_0, n}(P_{n_0}(h))\in X_{n_0, n}\subset X_{n,n}$ (see \eqref{appprojT-6}),
 by \eqref{appprojT-9}, 
\beq\label{appprojT-17-}
T_{n,2n}(T_{n_0, n}(P_{n_0}(h)))=T_{n_0, n}(P_{n_0}(h)).
\eneq
Hence, if $n\ge n_1+n_0,$ by \eqref{appprojT-17-} and by \eqref{appprojT-16},
\beq\label{appprojT-17}
&&\hspace{-1.4in}\|T_{n,2n}(P_{n_0}(h))-P_{n_0}(h)\|=\|T_{n, 2n}(P_{n_0}(h))-T_{n,2n}(T_{n_0, n}(P_{n_0}(h)))\|\\
&&\hspace{1in}+
\|T_{n,2n}((T_{n_0, n}(P_{n_0}(h))))-P_{n_0}(h)\|\\
&\le & \|T_{n, 2n}\|\|P_{n_0}(h)-T_{n_0, n}(P_{n_0}(h))\|+\|T_{n_0, n}(P_{n_0}(h))-P_{n_0}(h)\|\\
&<& \ep/6(\|\phi\|+1)+\ep/6(\|\phi\|+1)\\
&=&\ep/3(\|\phi\|+1).
\eneq
We also estimate that, if $n\ge n_1+n_0,$  by both parts of \eqref{appprojT-15} and the inequalities right above,
\beq\nonumber
\|T_{n,2n}(P_n(h))-h\|&\le &\|T_{n,2n}(P_n(h))-P_{n_0}(h)\|+\|P_{n_0}(h)-h\|\\\nonumber
&<&\|T_{n,2n}(P_n(h)-P_{n_0}(h))\|+\|T_{n,2n}(P_{n_0}(h))-P_{n_0}(h)\|+\ep/6(\|\phi\|+1)\\\nonumber
&<&\|T_{n,2n}\|(\ep/3(\|\phi\|+1))+\ep/3(\|\phi\|+1))+\ep/3(\|\phi\|+1)\\\label{appprojT-18}
&=&\ep/(\|\phi\|+1).
\eneq
We then  have, by \eqref{appprojT-12}, 
\eqref{appprojT-11} and \eqref{appprojT-18},
\beq\nonumber
\|s\circ \psi_n(h)-\phi(h)\|&\le&\|s\circ \psi_{n, 2n}(P_n^2(h))-\phi(T_{n,2n}(P_n(h)))\|+\|\phi(T_{n,2n}(P_n(h)))-\phi(h)\|\\\nonumber
&&<0+\ep.
\eneq

Next we consider the general case that $H$ is an arbitrary countably generated Hilbert 
$A$-module. By Kasparov's absorbing theorem (\cite{K}), there is 
a Hilbert module isomorphism $U: H\oplus l^2(A)\to l^2(A).$ 
Define $j: H\to H\oplus l^2(A)$ to be the obvious embedding and $p: H\oplus l^2(A)\to H$ 
the projection. Thus $p\circ j={\rm id}_H.$ 
Define $\Phi=\phi\circ p\circ U^{-1}: l^2(A)\to H_1.$ 
Then 
we have the following commutative diagram:
$$
\begin{array}{ccccc}
l^2(A) & \\
&&\\
_{U}\updownarrow^{U^{-1}} & \searrow^{\Phi}\\
&&\\
H\oplus l^2(A) &\stackrel{\phi\circ p}{\longrightarrow} &   H_1.
\end{array}
$$
In particular, $\Phi\circ U(x)=\phi(x)$ for all $x\in H.$
By what have been proved above, there exists a sequence of  module maps $\Psi_n: l^2(A)\to H_2$
such that
\beq
\lim_{n\to\infty} \|s\circ \Psi_n(z)-\Phi(z)\|=0\rforal z\in l^2(A).
\eneq
Define $\psi_n: H\to H_2$ by $\psi_n(x)= \Psi_n\circ U(x)$
for all $x\in H.$  It follows that
\beq,
\lim_{n\to\infty}\|s\circ\psi_n(x)-\phi(x)\|=\lim_{n\to\infty}\|s\circ \Psi_n(U(x))-\Phi(U(x))\|=0.
\eneq
\end{proof}

\begin{rem}
Note that, in Lemma \ref{appprojL}, 
for any $\ep>0,$ we may choose $\|y_n\|\le \|\tilde s^{-1}\circ \phi\|+\ep.$
So we may   estimate that $\|\psi_n\|\le \sqrt{k}(\|\tilde s^{-1}\circ \phi\|+\ep),$
which unfortunately depends on $k.$ Therefore we do not have an estimate on the norm 
of $\psi_n$ in Theorem \ref{appprojT}. This is also the reason that the proof seems a little more involved than 
expected (as we need a complicated statement in Lemma \ref{appprojL}).    If $\{\|\psi_n\|\}$ were, at least, uniformly bounded, then the proof would be much shorter.

However, under the assumption that $A$ has real rank zero, in the next statement,
we then have a better estimate on the norm 
on $\psi_n.$
\end{rem}

\begin{thm}\label{approjrr0}
Let $A$ be a $\sigma$-unital \CA\, of real rank zero and $H$ a countably generated Hilbert $A$-module.
Suppose that $\phi: H\to H_1$ is a bounded module map
and $s: H_2\to H_1$ is a surjective bounded module map.
Then there exists a sequence of module maps $\psi_n: H\to H_2$ such that
\beq
\|\psi_n\|\le \|{\tilde s}^{-1}\circ \phi\|\tand \lim_{n\to\infty}\|s\circ \psi_n(x)-\phi(x)\|=0\tforal x\in H.
\eneq
\end{thm}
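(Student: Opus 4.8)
The plan is to run the same exhaustion strategy as in Lemma \ref{appprojL} and Theorem \ref{appprojT}, but to replace the approximate identity of \emph{positive} elements $\{f_{1/n}(e)\}$ by an approximate identity of \emph{projections}, which real rank zero supplies. The gain is that the exhausting submodules become genuine finitely generated orthogonal summands (with $K(V_n)$ unital), rather than the non-self-dual pieces $A^{(k)}$; this is precisely what eliminates the spurious factor $\sqrt{k}$ in the norm estimate of Lemma \ref{appprojL}.

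First I would verify that $K(H)$ has real rank zero. Since $H$ is countably generated, $H\oplus l^2(A)\cong l^2(A)$ by Kasparov's absorbing theorem (\cite{K}), so $K(H)$ is isomorphic to a hereditary \SCA\, of $K(l^2(A))=A\otimes {\cal K}.$ As real rank zero passes to $A\otimes {\cal K}$ and to hereditary \SCA s, $K(H)$ has real rank zero; by Proposition \ref{TKHsigma} it is also $\sigma$-unital, hence admits an increasing approximate identity $E_1\le E_2\le\cdots$ consisting of projections. Put $V_n=E_nH.$ Then each $V_n$ is an orthogonal direct summand of $H,$ $V_n\subset V_{n+1},$ and, by (3) of Proposition \ref{TKHsigma}, $\lim_n\|E_nx-x\|=0$ for every $x\in H,$ so $\overline{\bigcup_n V_n}=H.$ Moreover $K(V_n)=E_nK(H)E_n$ is unital with unit $E_n,$ so by Proposition \ref{PH} and Proposition \ref{Probert} each $V_n$ is algebraically finitely generated, self-dual, and projective (Proposition \ref{FHP}).

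Next I would lift $\phi|_{V_n}$ through $s$ and extend. Since $E_n$ lies in a finitely generated summand, I can write $E_n=\sum_{i=1}^{N_n}\theta_{\zeta_i,\zeta_i}$ for a finite (Parseval) frame $\{\zeta_i\}\subset V_n,$ e.g. $\zeta_i=Pe_i$ under an identification $V_n=P{\tilde A}^{(N_n)}.$ Choosing $y_i\in H_2$ with $s(y_i)=\phi(\zeta_i)$ and setting $\psi_n'=\sum_{i=1}^{N_n}\theta_{y_i,\zeta_i}\in K(V_n,H_2),$ the frame identity gives $s\circ\psi_n'=\phi|_{V_n}.$ I then set $\psi_n=\psi_n'\circ E_n\colon H\to H_2;$ since $\|E_n\|=1$ this does not increase the norm, and for each $x\in H$ one has $s\circ\psi_n(x)=s\circ\psi_n'(E_nx)=\phi(E_nx)\to\phi(x),$ which is the required pointwise convergence. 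Thus the entire content of the theorem is concentrated in the norm estimate $\|\psi_n'\|\le\|{\tilde s}^{-1}\circ\phi\|.$ Because the analysis operator $v\mapsto\bigoplus_i\la\zeta_i,v\ra$ is isometric, $\|\psi_n'\|\le\|[\la y_i,y_j\ra]\|^{1/2}$ in $M_{N_n}(A),$ and after extending along $V_n=P{\tilde A}^{(N_n)}\hookrightarrow{\tilde A}^{(N_n)}$ the desired bound reduces to the following free-module statement: given the quotient map $q\colon H_2\to H_2/{\rm ker}\,s$ and a module map $g\colon{\tilde A}^{(N)}\to H_2/{\rm ker}\,s,$ for every $\ep>0$ there is a module map ${\bar g}\colon{\tilde A}^{(N)}\to H_2$ with $q\circ{\bar g}=g$ and $\|{\bar g}\|\le\|g\|+\ep.$

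The main obstacle is exactly this norm-preserving lifting into the quotient Banach module. A module map out of ${\tilde A}^{(N)}$ is determined by the $N$ elements $g(e_i),$ so the problem is to select coset representatives $y_i\in y_i^{(0)}+{\rm ker}\,s$ so that the Gram matrix $[\la y_i,y_j\ra]$ has norm within the quotient-norm value $\|g\|^{2}$ (up to $\ep,$ which I would absorb into the sequence by letting $\ep=\ep_n\to 0$). The delicate feature is that the $N$ ``directions'' interact, and the crude triangle-inequality estimate of Lemma \ref{appprojL} would reintroduce $\sqrt{N};$ the point is that because $V_n$ is a genuine orthogonal summand with $K(V_n)$ unital, only finitely many directions are present and the minimal-norm selection can be carried out using a perturbation/approximate-unit argument (in the scalar case $A=\C$ this is the standard $(1+\ep)$-lifting of a finite-dimensional operator through a quotient). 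Establishing this selection for module maps is where the reduction provided by real rank zero is genuinely used, and it is the step I expect to require the most care.
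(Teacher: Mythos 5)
Your reduction is sound as far as it goes: with an increasing approximate identity of projections $E_n\in K(H)$ (which real rank zero plus $\sigma$-unitality of $K(H)$ does supply), the frame computation $s\circ\psi_n'=\phi|_{V_n}$ and hence $s\circ\psi_n=\phi\circ E_n$ is correct, and pointwise convergence follows. But note that everything up to this point is already contained in Theorem \ref{appprojT}, which needs no real rank zero; the \emph{entire} content of the theorem is the norm bound, and that is exactly the step you leave unproven. Your ``free-module statement'' --- that every bounded module map $g\colon{\tilde A}^{(N)}\to H_2/{\rm ker}\,s$ lifts to ${\bar g}\colon{\tilde A}^{(N)}\to H_2$ with $\|{\bar g}\|\le\|g\|+\ep$ --- is not a routine perturbation/approximate-unit argument. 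For $N=1$ it is just the definition of the quotient norm, but for $N\ge 2$ the Gram-matrix interactions you yourself point out are the whole difficulty: the only tool of this kind available is L.~G.~Brown's norm-preserving lifting of left multipliers (3.11 of \cite{Br2}), which is what powers Theorem \ref{BLT1}, and which is invoked there only after a stabilization that requires the surjection to run between \emph{countably generated} modules. In your setting $H_2$ and ${\rm ker}\,s$ are arbitrary, and there is no evident reduction to the countably generated case (the sup defining $\|g\|$ runs over a non-separable ball when $A$ is not separable). So as written the proposal does not prove the theorem; it restates its crux as a conjecture.

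The paper avoids this by using real rank zero a \emph{second} time, on the codomain side, which your proposal omits. Let $H_{1,m}\subset H_1$ be the (countably generated) submodule generated by the $\phi$-images of the algebraic generators of $P_m H$. Since $K(H_{1,m})$ is a $\sigma$-unital hereditary subalgebra of $A\otimes{\cal K}$, real rank zero gives projections $p_{m,k}\in K(H_{1,m})$, so $H_{1,m,k}=p_{m,k}(H_{1,m})$ is \emph{algebraically} finitely generated (Proposition \ref{PH}). Lifting its finitely many algebraic generators through $s$ produces a countably generated $H_{2,m,k}\subset H_2$ with $s(H_{2,m,k})=H_{1,m,k}$ \emph{exactly} --- this is the point where algebraic generation matters: your frame lift of topological generators only yields dense range, not surjectivity, and surjectivity is what Theorem \ref{BLT1} needs. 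One then applies Theorem \ref{BLT1} to $s|_{H_{2,m,k}}$ and $\phi_{m,k}=p_{m,k}\circ\phi\circ P_m$ to get exact lifts $\psi_{m,k}$ with $\|\psi_{m,k}\|\le\|{\tilde s}^{-1}\circ\phi\|$ (exact, no $\ep$), and a diagonal choice $\psi_n=\psi_{n,n}$ together with compatibility estimates between the $p_{m,k}$ finishes the convergence. If you want to salvage your route, replace your unproven lemma by this mechanism. A further, minor, point: your $\ep_n$-losses would only give $\|\psi_n\|\le\|{\tilde s}^{-1}\circ\phi\|+\ep_n$, not the stated bound; this is repairable by rescaling $\psi_n$ by the factor $\|{\tilde s}^{-1}\circ\phi\|/(\|{\tilde s}^{-1}\circ\phi\|+\ep_n)\to 1$, but it is negligible next to the missing lifting lemma.
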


\begin{proof}
Since $H$ is countably generated, 
by  Corollary 1.5 of \cite{MP} (see  
(Lemma \ref{TKHsigma}, for convenience), $K(H)$ is a $\sigma$-unital hereditary \SCA\, of $A\otimes {\cal K}$
(see also Lemma 2.13 of \cite{Lninj}). Let $\{P_m\}$ be an approximate identity for $K(H)$ consisting of projections.
Put $H_{0, m}=P_m(H).$  Then $K(H_{0,m})=P_mK(H)P_m$ is unital.
It follows from Proposition \ref{PH} that $H_{0,m}$ is algebraically finitely generated, 
say, by $\{\xi_{m, i}:  1\le i\le d(m)\},$ where $\|\xi_{m,i}\|=1.$ 
It is important to note that $H_{0, m}\subset H_{0,m'}$ if $m<m'.$ 

Fix $m\in \N.$ Let $H_{1,m }$ be the Hilbert $A$-submodule of $H_1$ 
generated by $\{x_{m,i}=\phi(\xi_{m,i}): 1\le i\le d(m)\}.$ 
Since $H_{0, m}\subset H_{0,m'},$ $H_{1,m}\subset H_{1, m'},$
if $m<m'.$

Then $K(H_{1,m})$ is a $\sigma$-unital hereditary \SCA\, of $A\otimes {\cal K}$ (see Lemma \ref{TKHsigma}), $m\in \N.$  Since 
$A$ has real rank zero, so is $K(H_{1, m}),$ $m\in \N.$ 
Let $\{p_{m, k}\}_{k=1}^\infty$ be an approximate identity for $K(H_{1, m}).$
Let $H_{1,m, k}=p_{m,k}(H_{1,m}),$ $k, m\in \N.$ 
Since $H_{1, m}\subset H_{1, m'},$ if $m\le m',$ $K(H_{1,m})\subset K(H_{1, m'})$ (see Lemma 2.13 of \cite{Lninj}).
So we may assume that 
\beq\label{PPP}
\|p_{m+1,k}p_{m,k}-p_{m,k}\|<{1\over{2^{k+m+1}(\|\phi\|+1)}},\,\,k,m\in \N.
\eneq

Note that $K(H_{1,m,k})=p_{m,k}K(H_{1,m})p_{m,k}$ which is unital.
It follows from Proposition \ref{TKHsigma} that $H_{1, m,k}$  is algebraically finitely generated, say,  
by
$z_{m,k,1}, z_{m,k,2},...,z_{m,k, r(m,k)}$ as $A$-module.  
Choose $y_{m,k,j}\in H_2$ such that $s(y_{m,k,j})=z_{m, k,j},$ $1\le j\le r(m,k),$ $k,m\in \N.$ 
Let $H_{2, m,k}$ be the Hilbert $A$-submodule of $H_2$ generated by $\{y_{m,k,j}: 1\le k=j\le r(m,k)\}$
 as Hilbert $A$-module.

Then, since $H_{1, m,k}$ is algebraically  generated by $z_{m,k,1}, z_{m,k,2},...,z_{m,k, r(i,k)}$ as $A$-module,
$s(H_{2, m,k})=H_{1, m,k}.$  In other words, $s|_{H_{2,m,k}}$ is surjective (onto $H_{1, m,k}$). 

Fix an integer $m\in \N.$ Define $\phi_{m,k}: H\to H_{1,m,k}$ by 
\beq\label{rr0-3}
\phi_{m,k}(x)=p_{m,k}\circ \phi(P_m(x))
\eneq
for all $x\in H.$ Note that $\|\phi_{m,k}\|\le \|\phi\|,$ $m,k\in \N.$ 

Now $H_{2, m,k}$ is countably generated, 
it follows from  Theorem \ref{BLT1} that there is a  module 
map  $\psi_{m,k}:  H\to H_{2, m,k}$ such that  
\beq\label{rr0-4}
&&s\circ \psi_{m,k}=\phi_{m,k}\rforal k,m\in \N,\andeqn\\
&&\|\psi_{m,k}\|\le \|\tilde s^{-1}\circ \phi_{m,k}\|\le \|\tilde s^{-1}\circ \phi\|.
\eneq
Define $\psi_n=\psi_{n,n},$ $n\in \N.$  Then $\|\psi_n\|\le \|\tilde s^{-1}\circ \phi\|$
for all $n\in \N.$ 
We will  check that 
\beq
\lim_{n\to \infty}\|s\circ \psi_n(x)-\phi(x)\|=0\rforal x\in H.
\eneq
To check this, let $\ep>0$ and $x\in H$ with $\|x\|\le 1.$  Let $M=(\|s\|+1)(\|\tilde s^{-1}\circ \phi\|+1).$
By Lemma \ref{Lappunit} , there is $n_0\in \N$ such that
\beq\label{rr0-6}
\|x-P_m(x)\|<\ep/8M\andeqn \|P_m(x)-P_{m'}(x)\|
<\ep/8M \rforal m', m\ge n_0.
\eneq
There  is  also $k_0\in \N$ with $1/k_0<\ep/4$ such that, for any $k\ge k_0,$
\beq\label{PPPP}
\|p_{n_0,k}(\phi(P_{n_0}(x)))-\phi(P_{n_0}(x))\|
<\ep/4.
\eneq
 If $n\ge n_0$ and for all $k\in \N,$
we have, if $k\ge k_0+n_0,$  by \eqref{PPPP} and \eqref{PPP}, 
\beq\label{rr0-12}
p_{k,k}(\phi(P_{n_0}(x)))&\approx_{\ep/4}&p_{k,k}p_{n_0,k_0}(\phi(P_{n_0}(x)))\\
&\approx_{1/2^{k_0}}& p_{n_0,k_0}(P_{n_0}(x))\approx_{\ep/4} P_{n_0}(x).
\eneq
Then (recall that $H_{0, n_0}\subset H_{0, n},$ if $n_0<n$), by \eqref{rr0-6}, \eqref{rr0-3}, \eqref{rr0-4}
and \eqref{rr0-12}, when $n\ge n_0+k_0+(4/\ep),$ 
\beq\nonumber
\|s\circ \psi_n(x)-\phi(x)\|&\le& \|s\circ \psi_n(x)-s\circ \psi_{n,n}(P_{n_0}(x))\|+
\|s\circ \psi_{n,n}(P_{n_0}(x))-\phi(x)\|\\\nonumber
&<&\|s\circ \psi_n\|\|x-P_{n_0}(x)\|+\|s\circ \psi_{n,n}(P_{n_0}(x))-\phi(P_{n_0}(x))\|+\\
&&\hspace{0.3in}+\|\phi(P_{n_0}(x))-\phi(x)\|\\\nonumber
&<&\ep/8+\|s\circ \psi_{n,n}(P_{n_0}(x))-\phi_{n,n}(P_{n_0}(x))\|\\
&&\hspace{0.3in}+\|p_{n, n}(\phi(P_{n_0}(x)))-\phi(P_{n_0}(x))\|+\ep/4\\\nonumber
&<&\ep/8+0+(1/2^{k_0}+\ep/2)+\ep/8<\ep.
\eneq
\end{proof}

\begin{rem}
This paper was based on a preprint \cite{LinCuntz} of 2010.   However, some  of the original part of 
\cite{LinCuntz} have been  out of dated. 
A draft of the current version was made in 2014 including improved results 
in section 5.  Parts of section 4  and section 6 are  added recently. 
\end{rem}

\vspace{0.4in}


\noindent
 email: brownl@purdue.edu

\vspace{0.2in}


\noindent email: hlin@uoregon.edu
\end{document}